\documentclass[10pt]{article}
\usepackage{amsmath, amsthm, amssymb, pdfsync, psfrag, verbatim,color}
\usepackage{hyperref}
\usepackage{bbm}
\usepackage {mathrsfs, graphicx, newcent,wrapfig,multirow}
\usepackage[round]{natbib}

\usepackage{enumitem}
\newcommand{\mylabel}[2]{#2\def\@currentlabel{#2}\label{#1}}

\usepackage{breakcites}
\usepackage[affil-it]{authblk}

\textheight 221 true mm \topmargin -7 true mm
\textwidth  158 true mm
\oddsidemargin 2.5true mm
\evensidemargin 2.5 true mm

\newcommand{\Hzero}{(\mathrm{H}_0)}
\newcommand{\Hqunif}{(\mathrm{H}_{q}^{\mathrm{unif}})}
\newcommand{\Hqsep}{(\mathrm{H}_{q}^{\mathrm{sep}})}
\newcommand{\HqLK}{(\mathrm{H}_{q}^{\mathrm{LK}})}
\newcommand{\Hlip}{(\mathrm{H}_{\mathrm{lip}})}

\DeclareMathOperator{\sgn}{sgn}
\def\E{{\mathbb E}}

\newcommand{\F}{\mathcal{F}}
\newcommand{\prob}{\mathbb{P}}


\newcommand{\bbE}{\mathbb E}

\newcommand{\bbH}{\mathbb H}

\newcommand{\bbN}{\mathbb N}

\newcommand{\bbP}{\mathbb P}
\newcommand{\bbQ}{\mathbb Q}
\newcommand{\bbR}{\mathbb R}
\newcommand{\bbS}{\mathbb S}

\newcommand{\scA}{\mathcal A}
\newcommand{\scB}{\mathcal B}

\newcommand{\scE}{\mathcal E}
\newcommand{\scF}{\mathcal F}

\newcommand{\scT}{\mathcal T}

%
\newcommand{\veps}{\varepsilon}


\newcommand{\norm}[1]{\ensuremath{\left\| #1 \right\|}}
\newcommand{\abs}[1]{\ensuremath{\left| #1 \right|}}

\DeclareMathOperator*{\esssup}{ess\,sup}

\newcommand{\diff}{\mathop{}\!d}

\newcommand{\half}{\frac{1}{2}}

\newcommand{\crl}[1]{\ensuremath{ \left\{ #1 \right\} }}
\newcommand{\edg}[1]{\ensuremath{ \left[ #1 \right] }}
\newcommand{\brak}[1]{\ensuremath{\left( #1 \right)}}

\newcommand{\dis}{\text{dist}}

\newcommand{\proj}[1]{\Pi_{\conset}\left(#1\right)}
\newcommand{\conset}{C_t\left(\omega, \bbE[Z_t] \right)}
\newcommand{\consetwt}{\hat{C}(\mathbb{E}[\hat{Z}_t])}

\newtheorem{theorem}{Theorem}[section]
\newtheorem{definition}[theorem]{Definition}
\newtheorem{proposition}[theorem]{Proposition}
\newtheorem{corollary}[theorem]{Corollary}
\newtheorem{lemma}[theorem]{Lemma}
\newtheorem{remark}[theorem]{Remark}
\newtheorem{example}[theorem]{Example}
\newtheorem{examples}[theorem]{Examples}
\newtheorem{foo}[theorem]{Remarks}

\newenvironment{Remark}{\begin{remark}\rm}{\end{remark}}

\hyphenation{lin-e-ar-i-ty}
\hyphenation{non-de-crea-sing}

\title{Quadratic Mean-Field BSDEs and Exponential Utility Maximization\thanks{This work is supported by the National Natural Science Foundation of China (Grant No. 12571478), the Guangdong Basic and Applied Basic Research Foundation (Grant No. 2025B151502009), and the Shenzhen Fundamental Research General Program (Grant No. JCYJ20230807093309021). \\
Emails: y.ding@maths.usyd.edu.au, kihun.nam@monash.edu, wenjq@sustech.edu.cn.}
}

\author[a]{Yining Ding}
\author[b]{Kihun Nam}
\author[c]{Jiaqiang Wen}

\affil[a]{School of Mathematics and Statistics, The University of Sydney}
\affil[b]{School of Mathematics, Monash University}
\affil[c]{Department of Mathematics, Southern University of Science and Technology}
\date{February 14, 2026}
\begin{document}
	\maketitle
\begin{abstract}
In this paper, we study a class of real-valued mean-field backward stochastic differential equations (BSDEs) with generators of quadratic growth in the control variable and the mean-field term. Under this assumption, together with a bounded terminal condition, we establish the existence and uniqueness of solutions.
Our approach departs from classical fixed-point arguments and instead combines Malliavin calculus with refined BMO and stability estimates.
The result bridges the gap between the quadratic BSDE results of [Ann. Probab. 45 (2017), pp.~3795--3828] and Hao et al. [Ann. Appl. Probab. 35 (2025), pp.~2128--2174]. Moreover, motivated by the structure of the mean-field exponential utility maximization problem introduced in our paper, we extend our framework to terminal conditions without continuity or the Markovian assumption. We establish the existence and uniqueness of solutions under a smallness terminla value on the terminal conditions. We then apply this extended theory to solve a mean-field exponential utility maximization problem, which developing the classical framework of Hu et al. [Ann. Appl. Probab. 15 (2005), pp.~1691--1712] to a fully coupled quadratic mean-field setting.
\\[2mm]
\textbf{Key words:} Mean-field BSDEs; quadratic BSDEs; BMO martingales; Malliavin calculus; exponential utility; maximal utility.\\[1mm]
\textbf{MSC:} 60H30; 60H07; 60H10.

\end{abstract}

\setcounter{equation}{0}

\section{Introduction}
Let $(\Omega,\scF,\{\scF_t\}_{t \geq 0},\bbP)$ be a filtered probability space where $W$ is a $d$-dimensional Brownian motion and $\{\scF_t\}_{t \geq 0}$ is the Brownian filtration. Consider the following mean-field BSDE:
\begin{align}\label{intro:mfbsde}
    Y_t= \xi+\int_t^T \int_{\tilde{\Omega}} g(s,Z_s,\tilde Z_s) \diff \tilde\bbP ds-\int_t^TZ_s  \!\cdot dW_s,
\end{align}
where the process $\tilde Z$ is an independent copy of $Z$ defined on a duplicated filtered probability space $(\tilde\Omega,\tilde \scF,\{\tilde{\scF}_t\}_{t \geq 0},\tilde\bbP)$. Here, $\xi$ is $\scF_T$-measurable bounded random variable and $g:[0,T]\times\bbR^d\times\bbR^d\to\bbR$ is a deterministic map. In this article, we want to prove the existence and the uniqueness of an adapted couple solution $(Y,Z)$ such that $Y$ is bounded and $Z \cdot W$ is a BMO (bounded in mean oscillation) martingale when $g(s,z,z')$ has a quadratic growth in both $(z,z')$. 

The origin of linear BSDEs can be traced back to their role as adjoint equations in stochastic control problems, first studied by \cite{bismut1973conjugate}. The seminal paper by \cite{pardoux1990adapted} established the existence and uniqueness of adapted solutions $(Y, Z)$ for 
\begin{align}
    Y_t = \xi + \int_t^T F(s, Y_s, Z_s) \, ds - \int_t^T Z_s  \!\cdot dW_s, \quad t \in [0, T],
\end{align}
under the conditions that the generator $F$ is Lipschitz continuous and the terminal condition $\xi$ is square integrable. Since then, there has been growing interest in BSDEs, with applications in stochastic control, mathematical finance, and partial differential equations. For a comprehensive presentation of the theory, see \cite{el1997backward}, \cite{CohenStocal2015}, or \cite{ZhangBSDEbook2017}. Numerous studies have aimed to improve existence and uniqueness results by relaxing the Lipschitz condition on the generator $F$ or the conditions on the terminal condition $\xi$. 

BSDEs with generators of quadratic growth in the variable $z$ have attracted significant attention. These equations arise, for example, in stochastic linear-quadratic control with random coefficients \cite{bismut1976linearquad}, in utility optimization problems in an incomplete market \cite{hu2005utility}. \cite{Kobylanski:2000cy} provided the seminal existence and uniqueness result for real-valued quadratic BSDEs with bounded terminal values, where $F$ has quadratic growth with respect to its $Z$ component. \cite{briand2013simple} have continued to explore this area, demonstrating the existence of unique solutions to quadratic BSDEs with bounded terminal conditions using purely probabilistic methods. Investigations by \cite{briand2006bsde} extend the previous results to unbounded terminal conditions. Some studies the multidimensional cases: see \cite{hu2016multi,tang2023MultiMFBSDE, luo2019triangle, tevzadze2008solvability}.

Mean-field stochastic differential equations, also known as McKean-Vlasov equations, describe stochastic systems whose evolution is influenced by both the microscopic state and the macroscopic distribution of particles. These equations date back to \cite{kac1956foundations}’s work on the Boltzmann and Vlasov equations in the 1950s. Mean-field game theory began with the groundbreaking contributions of \cite{lasry2007mfgame}. Building on this foundation, \cite{Buckdahn2009MFlimit} introduced mean-field BSDEs, derived from approximations involving weakly interacting $N$ particle systems, and investigated them using probabilistic methods. This has led to a growing interest in mean-field BSDEs and FBSDEs. For instance, \cite{Buckdahn2009MFBSDE} studied the existence and uniqueness of solutions with Lipschitz continuous generators and square integrable terminal conditions. More recently, \cite{hao2022mean} explored the solvability of solutions for one-dimensional mean-field BSDEs with quadratic growth. In contrast, \cite{tang2023MultiMFBSDE} established the existence and uniqueness of solutions for multidimensional BSDEs with generators depending on the expectation of both variables. \cite{Li2018MFBSDEcontinuous} examined mean-field BSDEs whose coefficients depend not only on the solution $(Y, Z)$ but also on the law of $Y$. \cite{BuckdahnLiLi2026} derived a global stochastic maximum principle for mean-field forward–backward stochastic control problems with quadratic BSDE generators via BMO-based analysis. 

\cite{el1997backward} and \cite{Buckdahn2009MFBSDE} state that, in a Clark–Ocone type formula, the martingale density component $Z$ of the solution pair of a BSDE with a classical globally Lipschitz generator on a Gaussian basis is the Malliavin derivative of the other component $Y$ (see Proposition 5.3). The result has been extending our understanding of quadratic BSDEs: for example, \cite{briand2013simple} provided a straightforward approach to the existence and uniqueness of quadratic BSDEs with bounded terminal conditions using Malliavin calculus to overcome difficulties with quadratic generators, and \cite{Imkeller2010path} proposed a numerical scheme for BSDEs with generators of quadratic growth.

Continuous time portfolio optimization under the utility-maximization framework introduced by \cite{merton1969portfolio} is a foundational topic in mathematical finance. Merton analyzed the problem in a complete market where the risky asset follows the Black-Scholes dynamics and no constraints are imposed on admissible trading strategies. A major development in the early 2000s was the emergence of a connection between this class of optimization problems and the theory of BSDEs, which has since become a central approach in the field. The core idea of the BSDE methodology is to construct a stochastic process, depending on the investment strategy, that matches the investor’s utility of terminal wealth. In this context, \cite{elrouge2000utility} used BSDEs to compute the value function and optimal strategy for exponential utility, assuming trading strategies are restricted to a convex cone. Building on this framework, \cite{hu2005utility} developed a BSDE approach to utility maximization in incomplete markets and extended the methodology to include logarithmic and power utility functions, allowing for general strategy constraints within a closed set.

Our methods are inspired by the works of \cite{tevzadze2008solvability} and \cite{briand2013simple}. The former studied the existence and uniqueness of solutions to multidimensional quadratic BSDEs using BMO martingale theory. The latter provided a straightforward approach to establishing the existence and uniqueness of quadratic BSDEs with bounded terminal conditions using Malliavin calculus. Building on these foundations, we bridge the gap between the approaches of \cite{cheridito2017bse} and \cite{hao2022mean}. Specifically, \cite{hao2022mean} established the existence and uniqueness by applying a contraction mapping with the Reverse H\"older Inequality (RHI) and iterating backward in time when the generator is quadratic in $Z$ component, while \cite{cheridito2017bse} used the Krasnoselskii fixed-point theorem on a small time interval, iterating backward to obtain a global solution. Our contribution extends the results of \cite{cheridito2017bse} by demonstrating in Proposition 4.9 that the function \( f^1 \) satisfies the conditions outlined in \cite{hao2022mean}, thereby proving the existence and uniqueness of a solution for quadratic mean-field BSDEs, which is quadratic both in $Z$ and the mean-field component $\tilde Z$.

Our main contributions are as follows. First, we prove existence and uniqueness for quadratic mean-field BSDEs when the generator satisfies the \emph{unified} quadratic condition, that is, quadratic both in $Z$ and its mean-field component $\tilde Z$. In this fully coupled regime, the generator may contain genuine interactions between the control variable \(Z\) and the mean-field component \(\tilde Z\). To the best of our knowledge, this is the first paper that studies such a case in a general framework. Under this condition, we prove well-posedness for bounded terminal conditions of the form
\(\xi=\phi(X_{t_1},X_{t_2},\cdots,X_{t_n})\), where \(\phi\) is \(\alpha\)-H\"older continuous for $\alpha\in(0,1]$, $X$ is a solution of a forward stochastic differential equation, and $t_i\in[0,T]$. Second, under the \emph{separable} quadratic condition (i.e. the growth of the generator with respect to $\tilde Z$ can be bounded by a quadratic function of $\tilde Z$), we obtain existence and uniqueness of the form \(\xi=\phi(W_{[0,T]})\), where \(\phi:C([0,T];\bbR^d)\to\bbR\) is uniformly continuous in the sup norm. Finally, in the application part of the paper, we consider the class of generators of the same form as \emph{unified} quadratic condition up to constants, and we also prove existence and uniqueness under a \emph{smallness} condition on the terminal value \(\xi\), without imposing additional structural regularity. Our approach builds on BMO martingale theory and Malliavin calculus to address analytical difficulties arising from the quadratic structure, and it avoids fixed-point arguments commonly used in previous studies, such as \cite{juan2024MFBSDE} and \cite{tevzadze2008solvability}. A key novelty throughout is the inclusion of a quadratic mean-field component \(\tilde Z\), which substantially changes the structure compared to classical quadratic (MF)BSDEs.

The rest of the paper is organized as follows. In Section \ref{Sec: Mainresult}, we present the framework, introduce the standing assumptions (including $\Hqunif$ and $\Hqsep$), and state our main results. Section \ref{Sec: MalliavinD} derives an almost sure upper bound for the solution pair \( (Y, Z) \) of Lipschitz mean-field BSDEs, assuming the terminal condition admits a uniformly bounded Malliavin derivative, as shown in Proposition \ref{prop: lipbound}. Importantly, the resulting bound is independent of the Lipschitz constants in \( z \) and \( \tilde{z} \). In the main Section \ref{Sec: Stability}, we provide a priori estimates and establish a stability result tailored to mean-field BSDEs with quadratic growth, which will be used to pass from Lipschitz truncations to quadratic generators. In Section \ref{Sec: ExistenceUniqueness}, we combine the uniform Lipschitz bounds obtained in Section \ref{Sec: MalliavinD} with the stability result and a density argument to prove existence and uniqueness under $\Hqunif$ and $\Hqsep$. Finally, in Section \ref{Sec: application}, we study the class of generators of the form $\HqLK$, which is equivalent to $\Hqunif$ and show how the corresponding mean-field BSDE characterizes an exponential utility maximization problem under a suitable class of strategy constraints.

\section{Preliminaries and statement of main results} \label{Sec: Mainresult}
\subsection{Preliminaries}
For a given finite maturity $T$ and a filtered probability space $(\Omega, \scF, (\scF_t)_{t\in[0, T]}, \bbP)$ with the filtration satisfying the usual conditions and endowed with a $d-$dimensional standard Brownian motion $W = (W_t)_{t\geq 0}$. Then all martingales admit an RCLL modification (i.e., right-continuous with left limits). Let $(\tilde{\Omega}, \tilde{\scF}, (\tilde{\scF}_t)_{t\in[0,T]}, \tilde{\bbP})$ be a copy of the probability space $(\Omega, \scF, (\scF_t)_{t\in[0,T]}, \bbP)$. For any random variable (of arbitrary dimension) $X$ over $(\Omega, \scF, \bbP)$ we denote by $\tilde{X}$ a copy (i.e., of the same law as $X$), but defined over $(\tilde{\Omega}, \tilde{\scF}, \tilde{\bbP})$. The expectation $\tilde{\bbE}$ acts only over the variables endowed with a tilde. By $\abs{\cdot}$, we denote the Euclidean norm on $\bbR^d$, and for $p > 1$, we denote by
\begin{itemize}
    \item $L^p$: all real-valued random variables $X$ s.t. $\|X\|_{L^p} := \left(\E^\prob\big[ |X|^p \big]\right)^{\frac{1}{p}} < \infty$,
    \item $L^\infty$: all essentially bounded random variables $X$ s.t.
    $ \|X\|_{L^\infty} := \esssup_{\omega \in \Omega} |X(\omega)| <\infty$,
    \item $\bbS^p$: the set of real valued $\scF$-adapted continuous processes $(Y_t)_{0\leq t \leq T}$ satisfying $$\norm{Y}_{\bbS^p} := \left(\bbE\left[\sup_{0\leq r \leq T} \abs{Y_r}^p\right]\right)^{\frac{1}{p}} < \infty,$$
    \item $\bbS^\infty$: the set of real valued $\scF$-adapted continuous processes $(Y_t)_{0\leq t \leq T}$ satisfying
    $$\norm{Y}_{\bbS^\infty} := \sup_{0 \leq r \leq T} \norm{Y_r}_{L^\infty} = \sup_{\omega\in \Omega}\sup_{0\leq r\leq T} \abs{Y_r(\omega)} < \infty,$$
    \item $\bbH^p$: the set of predictable $\bbR^d$ valued process $Z$ satisfying $\norm{Z}_{\bbH^p} := \left(\bbE\left[\int_0^T \abs{Z_r}^2 \, dr \right]^\frac{p}{2} \right)^{\frac{1}{p}} < \infty,$
    \item $\bbH^2_{BMO}$: a subset of $\bbH^2$ with $Z$ satisfying $Z\cdot W:=\int_0^\cdot Z_s  \!\cdot dW_s $ is a Bounded Mean Oscillation (BMO) martingale, that is, there exists a non-negative constant $C$ such that, for each stopping time $\tau\leq T$,
    \begin{equation*}
        \bbE_{\tau}\left[\int_\tau^T \abs{Z_r}^2 \, dr\right] \leq C^2,
    \end{equation*}
    where $\bbE_{\tau}$ is the conditional expectation given $\scF_\tau$. The $\norm{Z}_{\bbH^2_{BMO}}:=\norm{\int_0^\cdot Z_s\cdot d W_s}_{BMO}$ is defined as the smallest non-negative constant $C$ for which the above inequality is satisfied.
    \item For a closed subset $C$ of $\mathbb{R}^n$ and $a \in \mathbb{R}^n$, the distance between $a$ and the set $C$ is defined by
    \begin{equation*}
        \dis(a, C) = \min_{b \in C} \abs{a - b}.
    \end{equation*}
    The set $\Pi_C(a)$ consists of all points $b \in C$ at which this minimum is attained, and we refer to this as the projection set.
\end{itemize}
When a different underlying measure other than $\bbP$ is involved, we indicate it explicitly in
the notation, for example by writing $\E^{\bbQ}[\cdot]$ and
 $\|\cdot\|_{BMO(\bbQ)}$ for expectations and BMO norms taken with respect to a
probability measure $\bbQ$. Here are some inequalities from BMO martingales theory, all the proofs can be found in \cite{kazamaki2006BMO}.
\begin{lemma}[Reverse H\"older Inequality (RHI)]\label{lem:RHI-BMO}
Let $M$ be a one–dimensional continuous $\mathrm{BMO}$ martingale under $\bbP$.
Then the Dol\'eans–Dade exponential $\mathcal{E}(M)$ satisfies the reverse
H\"older inequality: there exists $q^*>1$ such that for all $q\in(1,q^*)$ and
all stopping times $\tau\le T$,
\begin{equation}\label{eq:RH-explicit}
\bbE\!\left[\left.
\left(\frac{\mathcal{E}(M)_T}{\mathcal{E}(M)_\tau}\right)^{\!q}
\;\right|\F_\tau\right]
\;\le\; C_q^*,
\end{equation}
where the constants $q^*$ and $C_q^*$ depend explicitly on the $\mathrm{BMO}$ norm
$\|M\|_{\mathrm{BMO}(\prob)}$:
\[
q^* \;:=\; \phi^{-1}\!\Big(\|M\|_{\mathrm{BMO}}\Big),
\qquad
\phi(x) \;:=\; \Big(1+\tfrac{1}{x^2}\log\frac{2x-1}{2x-2}\Big)^{1/2}-1,
\]
and for $1<q<q^*$,
\[
C_q^* \;:=\;
2\Bigg(1 - \frac{2q-2}{2q-1}\,
\exp\!\Big\{ q^2\|M\|_{\mathrm{BMO}}^2 + 2\|M\|_{\mathrm{BMO}} \Big\}\Bigg)^{-1}.
\]
In particular, $q^*$ decreases and $C_q^*$ increases with respect to
$\|M\|_{\mathrm{BMO}}$.
\end{lemma}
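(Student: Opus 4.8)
The plan is to treat this as the classical $\mathrm{BMO}$-martingale fact that it is; for the paper it is enough to cite \cite{kazamaki2006BMO}, but the argument behind it rests on two ingredients, and I sketch both since the second is what pins down the explicit $q^{*}$ and $C_q^{*}$. The conceptual core is the \emph{energy (John--Nirenberg) inequality} for the bracket of a continuous $\mathrm{BMO}$ martingale: for every stopping time $\sigma\le T$ and every $n\ge 1$,
\[
\bbE\!\left[\left.\big(\langle M\rangle_T-\langle M\rangle_\sigma\big)^{n}\,\right|\,\F_\sigma\right]\le n!\,\|M\|_{\mathrm{BMO}}^{2n}.
\]
I would prove this by induction on $n$: the case $n=1$ is the definition of $\|M\|_{\mathrm{BMO}}$, and for the step one writes $(\langle M\rangle_T-\langle M\rangle_\sigma)^{n}=n\int_\sigma^T(\langle M\rangle_T-\langle M\rangle_s)^{n-1}\,d\langle M\rangle_s$, takes $\F_\sigma$-conditional expectation, inserts a further conditioning on $\F_s$ under the integral (optional projection / stochastic Fubini), applies the inductive bound, and uses the definition of $\|M\|_{\mathrm{BMO}}$ once more. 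Summing the resulting series gives, for every stopping time $\sigma\le T$ and every $\lambda$ with $\lambda\|M\|_{\mathrm{BMO}}^2<1$,
\[
\bbE\!\left[\left.\exp\!\big(\lambda(\langle M\rangle_T-\langle M\rangle_\sigma)\big)\,\right|\,\F_\sigma\right]\le\frac{1}{1-\lambda\|M\|_{\mathrm{BMO}}^2}.
\]
This step uses nothing about the underlying measure, so it will also be available under the equivalent measure produced below.

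Granting this, I would reduce the reverse H\"older estimate to an exponential estimate for $\langle M\rangle$ by peeling the martingale part of $\mathcal{E}(M)$ off as a genuine martingale. From $\mathcal{E}(M)_T/\mathcal{E}(M)_\tau=\exp\!\big((M_T-M_\tau)-\tfrac12(\langle M\rangle_T-\langle M\rangle_\tau)\big)$ one has the identity
\[
\left(\frac{\mathcal{E}(M)_T}{\mathcal{E}(M)_\tau}\right)^{q}=\frac{\mathcal{E}(qM)_T}{\mathcal{E}(qM)_\tau}\,\exp\!\left(\frac{q^2-q}{2}\,\big(\langle M\rangle_T-\langle M\rangle_\tau\big)\right).
\]
Since $qM\in\bbH^2_{BMO}$, $\mathcal{E}(qM)$ is a uniformly integrable martingale, so conditioning under $\bbQ$ with $d\bbQ/d\bbP=\mathcal{E}(qM)_T$ turns the $\bbP$-conditional expectation of the left side into $\bbE^{\bbQ}\!\big[\exp\big(\tfrac{q^2-q}{2}(\langle M\rangle_T-\langle M\rangle_\tau)\big)\,\big|\,\F_\tau\big]$. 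By Girsanov, $\langle M\rangle$ is unchanged under $\bbQ$ and $M-q\langle M\rangle$ is a $\bbQ$-$\mathrm{BMO}$ martingale whose $\mathrm{BMO}(\bbQ)$-norm is bounded by an explicit function of $\|M\|_{\mathrm{BMO}(\bbP)}$ and $q$; applying the John--Nirenberg bound above under $\bbQ$ with $\lambda=\tfrac{q^2-q}{2}$ then gives a finite estimate exactly when $\tfrac{q^2-q}{2}\,\|M-q\langle M\rangle\|_{\mathrm{BMO}(\bbQ)}^2<1$. (One can instead stay under $\bbP$ and iterate a conditional H\"older inequality applied to the displayed factorisation; this trades the change of measure for a short bootstrap and yields the same kind of bound.)

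The remaining work is bookkeeping: substituting the explicit norm-control estimate for $M-q\langle M\rangle$ under $\bbQ$ and simplifying the admissibility condition on $q$ is precisely what produces the threshold $q^{*}=\phi^{-1}(\|M\|_{\mathrm{BMO}})$ with $\phi$ as in the statement, together with the explicit constant $C_q^{*}$; the monotonicity assertions ($q^{*}$ decreasing and $C_q^{*}$ increasing in $\|M\|_{\mathrm{BMO}}$) then follow by inspecting these closed forms. I expect essentially all of the difficulty to sit in the two quantitative inputs — obtaining the energy inequality with a constant sharp enough to yield the stated $\phi$, and controlling $\|M-q\langle M\rangle\|_{\mathrm{BMO}(\bbQ)}$ through the measure change — with everything else being elementary manipulation; the full details can be found in \cite{kazamaki2006BMO}.
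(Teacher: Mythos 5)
Your proposal is correct and takes essentially the same approach as the paper: the paper offers no proof of this lemma, stating only that it can be found in \cite{kazamaki2006BMO}, and your sketch --- the energy/John--Nirenberg moment bound for $\langle M\rangle$, the factorisation of $\bigl(\mathcal{E}(M)_T/\mathcal{E}(M)_\tau\bigr)^q$ through $\mathcal{E}(qM)$, and the Girsanov change of measure controlling $\|M-q\langle M\rangle\|_{\mathrm{BMO}(\bbQ)}$ --- is precisely the standard argument from that reference, to which you also defer for the quantitative bookkeeping. Nothing further is required.
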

\begin{lemma}\label{lem:BMO-change-measure}
For $K>0$, there exist constants $c_1(K),c_2(K)>0$, depending only on $K$,
such that the following holds. Let $N$ be a one-dimensional continuous
$\mathrm{BMO}$ martingale under $\bbP$ with
$\|N\|_{\mathrm{BMO}(\prob)}\le K$, and define the equivalent measure
$\tilde{\bbP}$ by
\[
\frac{d\tilde{\bbP}}{d\bbP} \;=\; \mathcal{E}(N)_T.
\]
For any continuous $\mathrm{BMO}$ martingale $M$ under $\bbP$, set $\tilde M \;:=\; M - \langle M,N\rangle$.
Then
$\tilde M\in \mathrm{BMO}(\tilde{\bbP})$ and
\begin{equation}\label{eq:BMO-comparability}
c_1(K)\,\|M\|_{\mathrm{BMO}(\bbP)}
\;\le\;
\|\tilde M\|_{\mathrm{BMO}(\tilde{\bbP})}
\;\le\;
c_2(K)\,\|M\|_{\mathrm{BMO}(\bbP)}.
\end{equation}
\end{lemma}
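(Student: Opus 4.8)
The statement is the classical stability of the BMO class under a Girsanov transformation, with the extra bookkeeping that the comparison constants depend only on the bound $K$. My plan is to follow a Kazamaki-type computation resting on the Reverse H\"older Inequality (Lemma~\ref{lem:RHI-BMO}) and on the John--Nirenberg (energy) inequality, taking care to factor all constants through $K$ alone. First I would reduce to a conditional estimate. By Girsanov's theorem, $\tilde M = M - \langle M,N\rangle$ is a continuous local martingale under $\tilde{\bbP}$, and since covariation is insensitive to an equivalent change of measure, $\langle\tilde M\rangle = \langle M\rangle$. Hence, for every stopping time $\tau\le T$, controlling $\|\tilde M\|_{\mathrm{BMO}(\tilde{\bbP})}$ amounts to bounding $\tilde{\bbE}[\langle M\rangle_T-\langle M\rangle_\tau\mid\F_\tau]$. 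Using the Bayes rule together with $\mathcal{E}(N)_\tau=\bbE[\mathcal{E}(N)_T\mid\F_\tau]$, this conditional expectation equals
\[
\bbE\!\left[\frac{\mathcal{E}(N)_T}{\mathcal{E}(N)_\tau}\,\bigl(\langle M\rangle_T-\langle M\rangle_\tau\bigr)\,\Big|\,\F_\tau\right].
\]

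Next I would apply H\"older together with the two BMO inequalities. Because $\|N\|_{\mathrm{BMO}(\bbP)}\le K$ and, by Lemma~\ref{lem:RHI-BMO}, $q^*$ is decreasing and $C_q^*$ increasing in the BMO norm, there is an exponent $q=q(K)\in(1,q^*(K))$ and a constant $C(K)$ such that $\bbE[(\mathcal{E}(N)_T/\mathcal{E}(N)_\tau)^q\mid\F_\tau]\le C(K)$ for every admissible $N$ and every $\tau$. With $q'=q/(q-1)$, the conditional H\"older inequality gives
\[
\tilde{\bbE}\!\left[\langle M\rangle_T-\langle M\rangle_\tau\,\big|\,\F_\tau\right]
\le C(K)^{1/q}\,\bbE\!\left[\bigl(\langle M\rangle_T-\langle M\rangle_\tau\bigr)^{q'}\,\big|\,\F_\tau\right]^{1/q'}.
\]
For the second factor I would invoke the conditional energy inequality for the $\mathrm{BMO}(\bbP)$ martingale $M$, namely $\bbE[(\langle M\rangle_T-\langle M\rangle_\tau)^n\mid\F_\tau]\le n!\,\|M\|_{\mathrm{BMO}(\bbP)}^{2n}$ for integers $n$; taking $n=\lceil q'\rceil$ and applying conditional Jensen yields $\bbE[(\langle M\rangle_T-\langle M\rangle_\tau)^{q'}\mid\F_\tau]\le a(K)\,\|M\|_{\mathrm{BMO}(\bbP)}^{2q'}$. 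Combining these, taking the supremum over $\tau$ and a square root, produces $\|\tilde M\|_{\mathrm{BMO}(\tilde{\bbP})}\le c_2(K)\,\|M\|_{\mathrm{BMO}(\bbP)}$; in particular $\tilde M$ is a genuine $\mathrm{BMO}(\tilde{\bbP})$ martingale.

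Finally, the lower bound follows by symmetry after one bootstrap. The reverse density is $d\bbP/d\tilde{\bbP}=\mathcal{E}(N)_T^{-1}=\mathcal{E}(-\tilde N)_T$ with $\tilde N=N-\langle N\rangle$; applying the estimate just obtained to $M=N$ shows $\|\tilde N\|_{\mathrm{BMO}(\tilde{\bbP})}\le c_2(K)K=:K'$, which depends only on $K$. Since covariations are measure-independent one checks $\tilde M+\langle\tilde M,\tilde N\rangle=M$, so running the upper-bound argument in the $\tilde{\bbP}\to\bbP$ direction --- now with the $\mathrm{BMO}(\tilde{\bbP})$ martingale $\tilde N$ of norm at most $K'$ --- gives $\|M\|_{\mathrm{BMO}(\bbP)}\le c_2(K')\,\|\tilde M\|_{\mathrm{BMO}(\tilde{\bbP})}$, i.e.\ the left-hand inequality with $c_1(K):=1/c_2(K')$. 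The only delicate point throughout is that each constant be a function of $K$ only: this is exactly what the monotonicity in Lemma~\ref{lem:RHI-BMO} buys for the Reverse H\"older step, while the energy inequality contributes merely the harmless homogeneous factor $\|M\|_{\mathrm{BMO}(\bbP)}^{2}$. The bootstrap in this last paragraph is what I expect to be the one genuinely non-automatic ingredient, since the reverse Girsanov density is governed by the $\tilde{\bbP}$-norm of $\tilde N$ rather than its $\bbP$-norm.
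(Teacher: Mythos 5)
The paper does not prove this lemma at all --- it states it as a known fact and refers to \cite{kazamaki2006BMO} for the proof. Your argument (Bayes' rule plus conditional H\"older against the reverse H\"older estimate for $\mathcal{E}(N)$, the energy inequality for $\langle M\rangle$, and the bootstrap via $\mathcal{E}(N)_T^{-1}=\mathcal{E}(-\tilde N)_T$ to get the lower bound) is precisely the standard Kazamaki proof of that cited result, and it is correct, including the tracking of all constants through $K$ alone.
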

\begin{definition}[sliceability]\label{def:sliceability}
Let $M$ be a one–dimensional continuous $\mathrm{BMO}$ martingale under $\bbP$ on $[0,T]$.
A \emph{random partition} of $[0,T]$ is a finite sequence of stopping times
$0=\tau_0\le\tau_1\le\cdots\le\tau_m=T$. For $\delta>0$, the \emph{index of sliceability} of $M$ is the function
\[
  N_M:(0,\infty)\longrightarrow \bbN\cup\{\infty\}
\]
defined as follows: $N_M(\delta)$ is the smallest integer $m$ such that there
exists a random partition $(t_k)_{k=0}^m$ of $[0,T]$ with
\begin{equation}\label{eq:slice-M}
  \big\|M^{(k)}\big\|_{\mathrm{BMO}} \le \delta,
  \qquad 1\le k\le m,
\end{equation}
where the $k$–th increment $M^{(k)}$ is the continuous martingale
\[
  M^{(k)}_t := M_{t\wedge \tau_k} - M_{t\wedge\tau_{k-1}},
  \qquad 0\le t\le T.
\]
If no such $m$ exists, we set $N_M(\delta)=\infty$. We say that $M$ is \emph{$\delta$–sliceable} if $N_M(\delta)<\infty$, and simply
\emph{sliceable} if $M$ is $\delta$–sliceable for every $\delta>0$.
\end{definition}
\subsection{Statement of the main theorem}
In this paper, we consider the following general mean-field BSDE
\begin{equation} \label{mainBSDE}
Y_t=\xi+\int_t^T \int_{\tilde{\Omega}} g(s,Z_s, \tilde{Z}_s) d \tilde{\bbP} \, ds-\int_t^T Z_s  \!\cdot dW_s, \quad t \in [0, T].
\end{equation}
The main result of this paper establishes the existence and uniqueness of solutions to the mean-field BSDE under a bounded terminal condition. To this end, we introduce the following assumptions. Let \( K_0, K_z, K_{\tilde{z}} > 0 \) be fixed constants.

\begin{enumerate}
    \item[{$\Hzero$}] The random variable $\xi$ is bounded almost surely by $K_0 > 0$, and likewise, the deterministic supremum $\sup_{t\in[0,T]}|g(t,0,0)|$ is also bounded by the same constant $K_0 > 0$.
    \item[$\Hqunif$] There exist constants $K_z > 0$ and $K_{\tilde{z}} > 0$ such that:
    \begin{align*}
        |g(t,z,\tilde z)-g(t,z',\tilde z')|
\le & K_z\big(1+|z|+|z'|+|\tilde z|+|\tilde z'|\big)\big(|z-z'|\big) \notag \\ &+ K_{\tilde z}\big(1+|z|+|z'|+|\tilde z|+|\tilde z'|\big)\big(|\tilde z-\tilde z'|\big),
    \end{align*} for any $(t, z, \tilde{z}, z', \tilde{z}')\in [0,T] \times [\bbR^d \times \bbR^d]^2$.
    \item[$\Hqsep$] There exist constants $K_z > 0$ and $K_{\tilde{z}} > 0$ such that:
	\begin{align*}
		\abs{g(t, z, \tilde{z}) - g(t, z', \tilde{z}')} \leq & K_z(1+\abs{z} + \abs{z'}+\abs{\tilde{z}} + \abs{\tilde{z}'})\abs{z-z'} + K_{\tilde{z}}(1+\abs{\tilde{z}} + \abs{\tilde{z}'})\abs{\tilde{z}-\tilde{z}'},
	\end{align*} for any $(t, z, \tilde{z}, z', \tilde{z}')\in [0,T] \times [\bbR^d \times \bbR^d]^2$.
\end{enumerate}
\begin{remark}
The unified quadratic condition ${\Hqunif}$ is the fully coupled case, in contrast with the separately quadratic ${\Hqsep}$. It is immediate that ${\Hqunif}$ implies ${\Hqsep}$, any statement proved under ${\Hqunif}$ also holds under ${\Hqsep}$, while the converse is not true.  We provide an explicit example here. For simplicity, consider the one-dimensional case and define
\[
g(t,z,\tilde z) := z\,\tilde z, \qquad (t,z,\tilde z)\in[0,T]\times\bbR^2.
\]
This $g$ satisfies ${\Hqunif}$, but does not satisfy ${\Hqsep}$.
\end{remark}

\begin{example}
    The following $g$ satisfies the condition ${\Hqsep}$:
    \[
g(t,z,\tilde z)=l(t,z,\tilde z)+q(t,\tilde z),
    \]
    where $l$ is locally Lipschitz in $(z,\tilde z)$ with quadratic growth in $z$ and linear growth in $\tilde z$, and $q$ is quadratic polynomials in $\tilde z$. Note that this example does not satisfy the existence and uniqueness assumption in previous literature, such as \cite{hao2022mean}.
\end{example}
Let us state our main theorems. The first main theorem assumes a Markovian structure with ${\Hqunif}$ while the second one assumes non-Markovian structure with ${\Hqsep}$.
\begin{theorem}\label{thm:mainHolder}
Assume that there is a constant $C$ such that $b:[0,T]\times\bbR^k\to\bbR^k$ and $\sigma:[0,T]\to \bbR^{k\times n}$ are measurable functions with 
\begin{itemize}
\item 	$|b(t,x)-b(t,x')|\leq C|x-x'|$, $|b(t,0)|\leq C$ for every $(t,x)\in[0,T]\times\bbR^k$, and
\item $C^{-1}|v|^2\leq v^\intercal(\sigma\sigma^\intercal)(t)v\leq C|v|^2$ for all $v\in\bbR^k$ and $t\in[0,T]$.
\end{itemize} Then, let $X$ be the strong solution of 
\[
dX_t=b(t, X_t)dt+\sigma(t)dW_t;\qquad X_0=x_0.
\]
Let us assume ${\Hzero}$ and ${\Hqunif}$. In addition, assume that
$$\xi=\phi(X_{t_1},X_{t_2},\cdots, X_{t_n}),$$ 
for a bounded function $\phi:(\bbR^d)^n\to\bbR$ which is $\alpha$-H\"older continuous, that is, there exists $K>0$ and $\alpha\in(0,1]$ such that 
\[
|\phi(x)-\phi(x')|\leq K\max_{i=1,2,\cdots, n}|x_i-x'_i|^\alpha,
\]
for any $x,x'\in (\bbR^d)^n$. 
Then, there exists a unique solution \((Y,Z) \in \bbS^\infty \times \bbH^2_{\mathrm{BMO}}\) to the BSDE \eqref{mainBSDE}.
\end{theorem}
\begin{remark}
Note that our terminal condition is not necessarily Malliavin differentiable if $\alpha<1$.
\end{remark}
\begin{theorem}
	Let Assumptions ${\Hzero}$ and ${\Hqsep}$ hold. In addition, assume that there exists a function $\phi:C([0,T];\bbR^d)\to\bbR$ such that $\xi=\phi(W_{[0,T]})$ and uniformly continuous in sup norm.
	Then, there exists a unique solution \((Y,Z) \in \bbS^\infty \times \bbH^2_{\mathrm{BMO}}\) to the BSDE \eqref{mainBSDE}.
\end{theorem}
\begin{remark}
\label{rem:shifted-generator}
In the statement and proof of our main theorem, we may, without loss of generality, assume that
\[
g(t,0,\tilde z) = 0, \qquad \forall\,(t,\tilde z)\in[0,T]\times\bbR^d.
\]
Given any generator $g$ satisfying Assumptions $\Hzero$–${\Hqunif}$, we may  introduce the shifted generator
\[
\hat g(t,z,\tilde z)
:= g(t,z,\tilde z) - g(t,0,\tilde z),
\]
and consider the mean-field BSDE driven by $\hat g$. For any solution $(Y,Z)$ of the original BSDE with generator $g$, the process
\[
U_t := Y_t - \int_t^T \widetilde{\E}\big[g(s,0,\tilde Z_s)\big]\,ds, \qquad 0\le t\le T,
\]
together with the same $Z$, solves the BSDE with generator $\hat g$. Conversely, given a solution $(U,Z)$ to the BSDE with generator $\hat g$, the process
\[
Y_t = U_t + \int_t^T \widetilde{\E}\big[g(s,0,\tilde Z_s)\big]\,ds
\]
solves the original BSDE. Thus there is a one-to-one correspondence in terms of existence and uniqueness. \emph{From this point on, we therefore work directly with the shifted generator, still denoted by $g$, and silently assume
$g(t,0,\tilde z)=0$ for all $(t,\tilde z)\in[0,T]\times\bbR^d$.}
\end{remark}
\section{Terminal conditions with bounded Malliavin derivative}\label{Sec: MalliavinD}

This section is devoted to deriving tractable upper bounds for the solution \((Y, Z)\) of the BSDE \eqref{mainBSDE} under the assumption that the driver \(g\) is Lipschitz continuous and the terminal condition \(\xi\) is Malliavin differentiable. These bounds under the stronger condition become particularly useful in the proof of Lemma \ref{Lem: Malliavin}, we will show that the general case, where the driver of quadratic growth and where \(\xi\) is bounded, can be reduced to this setting through suitable approximation techniques. Therefore, we initially work under stronger regularity assumptions to facilitate this reduction. For this purpose, we suppose all along this section that $g$ satisfies the following classical Lipschitz condition and condition on  \(\xi\) will be introduced in Propisiton \ref{prop: lipbound} below.

\begin{itemize}
    \item[$\Hlip$] There exist a constant $L_z > 0$ such that:
    \begin{equation*}
        \abs{g(t, z, \tilde{z}) - g(t, z', \tilde{z}')} \leq L_z \brak{ \abs{z- z'}+\abs{\tilde{z}-\tilde{z}'}},
    \end{equation*}
    for any $(t, z, \tilde{z}, z', \tilde{z}')\in [0,T] \times [\bbR^d \times \bbR^d]^2$.
\end{itemize}

The existence and uniqueness of a solution \((Y, Z) \in \bbS^2 \times \bbH^2\) to the BSDE \eqref{mainBSDE} under assumptions $\Hzero$–$\Hlip$ is a well-established result, as shown in Theorem 3.1 of \cite{Buckdahn2009MFBSDE}. In the following proposition, we demonstrate that the process \(Y\), which lies in \(\bbS^\infty\), is bounded by a constant that is independent of the Lipschitz constant \(L_z\). More remarkably, we show that the same independence holds for the process \(Z\), provided the terminal condition \(\xi\) admits uniformly bounded Malliavin derivatives.

\begin{proposition}\label{prop: lipbound}
If $\Hzero$ and $\Hlip$ hold, then the unique solution $(Y, Z) \in \bbS^2 \times \bbH^2$ of the Lipschitz mean-field BSDE satisfies the following properties.
\begin{enumerate}
    \item $Y \in \bbS^\infty$ with the upper bound:
    \begin{equation}\label{eq: lipboundy}
        \|Y\|_{\bbS^\infty}
    \;\le\; \norm{\xi}_{L^\infty}.
    \end{equation}
    \item If \(\xi\) is Malliavin differentiable and satisfies \(\norm{D\xi}_{\bbS^\infty} = \sup_{0 \leq t \leq T} \norm{D_t \xi}_{L^\infty} < \infty\), then \(Z \in \bbS^\infty\), and its norm is also bounded independently of \(L_z\):
        \begin{equation}\label{eq: lipboundz}
            \norm{Z}_{\bbS^\infty} \leq \norm{D\xi}_{\bbS^\infty}.
        \end{equation}
\end{enumerate}
\end{proposition}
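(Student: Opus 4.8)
The plan is to treat both bounds as consequences of linear representation formulas combined with Girsanov-type changes of measure, exactly as in the classical (non mean-field) Lipschitz setting, with the mean-field term handled by the fact that $\widetilde{\mathbb E}\,g(s,Z_s,\tilde Z_s)$ is, after the solution is frozen, just a deterministic-in-randomness driver term that we can absorb into a linearization.

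For part (1), I would linearize the generator along the solution. Writing $g(s,Z_s,\tilde Z_s) = g(s,0,\tilde Z_s) + \big(g(s,Z_s,\tilde Z_s)-g(s,0,\tilde Z_s)\big)$ and using the normalization $g(s,0,\tilde z)=0$ from Remark~\ref{rem:shifted-generator} together with $({\rm H_{lip}})$, there is a bounded predictable process $\beta$ with $|\beta_s|\le L_z$ such that $\widetilde{\mathbb E}\,g(s,Z_s,\tilde Z_s)=\beta_s\cdot Z_s$. Hence $Y$ solves the \emph{linear} BSDE $Y_t=\xi+\int_t^T \beta_s\cdot Z_s\,ds-\int_t^T Z_s\,dW_s$. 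Defining $d\mathbb Q/d\mathbb P=\mathcal E(\int_0^\cdot \beta_s\,dW_s)_T$ (legitimate since $\beta$ is bounded, so the stochastic exponential is a true martingale), under $\mathbb Q$ the process $W^{\mathbb Q}_t:=W_t-\int_0^t\beta_s\,ds$ is a Brownian motion and $Y_t=\mathbb E^{\mathbb Q}[\xi\mid\mathcal F_t]$. Taking absolute values and using $|\xi|\le\|\xi\|_{L^\infty}$ gives $|Y_t|\le\|\xi\|_{L^\infty}$ for all $t$, $\mathbb P$-a.s., which is \eqref{eq: lipboundy}.

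For part (2), assuming $\xi\in\mathbb D^{1,2}$ with $\|D\xi\|_{\mathbb S^\infty}<\infty$, I would invoke the standard result (Buckdahn--Li--Peng / Proposition 5.3 of \cite{Buckdahn2009MFBSDE}, or the Lipschitz Malliavin calculus for BSDEs in the spirit of \cite{briand2013simple}) that $(Y,Z)$ is Malliavin differentiable and $(D_u Y_t, D_u Z_t)_{t\ge u}$ solves the linearized mean-field BSDE obtained by formally differentiating \eqref{mainBSDE}; in particular $Z_t$ admits the version $Z_t = D_t Y_t$. The differentiated equation is again linear with coefficients that are products of the (bounded) partial-derivative processes of $g$ and the processes $D_u Z$, $\widetilde{D_u Z}$; the mean-field partial derivative contributes a term of the form $\widetilde{\mathbb E}[\,\partial_{\tilde z}g(s,\cdots)\,\widetilde{D_uZ_s}\,]$, which is a deterministic bounded functional applied to $\widetilde{D_uZ}$. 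The key point is that after a further change of measure (again with a bounded market-price-of-risk coming from $\partial_z g$), $D_u Y_t$ is represented as a conditional expectation of $D_u\xi$ under the new measure, \emph{plus} a term involving the mean-field contribution; one shows this extra term does not inflate the bound because $\widetilde{\mathbb E}$ is an average and $\partial_{\tilde z}g$ has the same structure, so a Gronwall/fixed-point argument in $\mathbb S^\infty$ on the map $\Phi\mapsto \sup_u\|D_u Y\|$ closes with constant $\|D\xi\|_{\mathbb S^\infty}$ and \emph{no} dependence on $L_z$. Evaluating at $t=u$ and using $Z_t=D_tY_t$ yields \eqref{eq: lipboundz}.

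The main obstacle, and the place I would spend the most care, is the $L_z$-independence in part (2): a naive Gronwall estimate on the linearized system produces a factor $e^{CL_z T}$, which is exactly what we must avoid. The resolution is to keep the linearized drift purely as a change-of-measure term (absorbing \emph{both} the $\partial_z g\cdot D_uZ$ term and, crucially, recognizing that the mean-field term $\widetilde{\mathbb E}[\partial_{\tilde z}g\cdot\widetilde{D_uZ}]$ is bounded pointwise by $L_z\,\sup_{v}\|D_v Z\|_{\mathbb S^\infty}$ but appears only in the \emph{finite-variation} part) so that the conditional-expectation representation is exact and the only surviving bound is on the terminal data $D_u\xi$. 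Concretely, one argues that $M_t:=\|\sup_{u}|D_uY_t|\,\|$ satisfies, under the tilted measure, $m(t):=\sup_{\omega,u}|D_uY_t|\le \|D\xi\|_{\mathbb S^\infty}+L_z\int_t^T m(s)\,ds$ is the \emph{wrong} inequality; instead one must set up the problem so that the mean-field term is itself handled by a measure change or by noting that at the fixed point $\sup_u\|D_uZ\|_{\mathbb S^\infty}=\sup_u\|D_uY\|_{\mathbb S^\infty}$, giving a self-referential bound that, by a contraction/uniqueness argument on a short interval and concatenation (or directly, because the coefficient multiplying the unknown sup is a change of measure and not a Gronwall drift), collapses to $\|D\xi\|_{\mathbb S^\infty}$. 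Verifying that this circularity is benign — i.e. that the linearized mean-field BSDE for $D_uY$ genuinely admits the conditional-expectation representation rather than merely a Gronwall bound — is the technical heart of the argument.
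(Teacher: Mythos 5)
Your treatment of part (1) coincides with the paper's: linearize the (normalised) generator along the solution, absorb the bounded coefficient $\beta$ by Girsanov, and read off $Y_t=\E^{\bbQ}[\xi\mid\F_t]$, hence $|Y_t|\le\|\xi\|_{L^\infty}$. That part is fine and is exactly the argument the paper borrows from Proposition~\ref{Prop: estimateY}.

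Part (2) contains a genuine gap, and it is precisely the point you yourself flag as ``the technical heart of the argument.'' You assert that differentiating the mean-field term produces a contribution $\widetilde{\E}[\partial_{\tilde z}g(s,\cdots)\,\widetilde{D_uZ_s}]$ and then try to argue that this term does not inflate the bound; but none of the mechanisms you propose actually works. The term is additive in the finite-variation part and does not multiply $D_uZ_s$, so it cannot be absorbed by a change of measure, and any Gronwall or fixed-point estimate on $\sup_u\|D_uY\|_{\bbS^\infty}$ that retains it produces a factor depending on $L_z$ (of the type $e^{CL_zT}$), which is exactly the dependence the proposition forbids. The resolution --- and the step your proof is missing --- is that this term is identically zero. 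The Malliavin derivative $D$ is taken with respect to the base Brownian motion $W$ on $\Omega$, while $\tilde Z_s$ lives on the independent copy space $\tilde\Omega$; equivalently, the driver is $h(s,Z_s)$ with $h(s,z):=\widetilde{\E}\,g(s,z,\tilde Z_s)$ a deterministic function of $(s,z)$, so the chain rule yields only $\widetilde{\E}[\partial_z g(s,Z_s,\tilde Z_s)]\cdot D_\theta Z_s$, because $D_\theta\tilde Z_s=0$. The differentiated equation is therefore a plain linear BSDE with bounded coefficient $\widetilde{\E}[\partial_z g]$; Girsanov gives the exact representation $D_\theta Y_t=\E^{\bbQ}[D_\theta\xi\mid\F_t]$ with no residual drift, and the bound $\|D\xi\|_{\bbS^\infty}$ follows with no dependence on $L_z$. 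Your term $\widetilde{D_uZ_s}$ appears to come from confusing $D_\theta\tilde Z_s$ (the base-space derivative of the copy, which vanishes) with the copy of $D_\theta Z_s$ (which never enters the equation). Without this observation the circularity you describe is not benign and the proof does not close.
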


\begin{proof}
    Unlike standard Lipschitz BSDE, the comparison theorem cannot be applied here  (see Theorem~3.2 in~\cite{Buckdahn2009MFBSDE}). Observe that \eqref{eq: lipboundy} can be derived following the line of proof of Proposition \ref{Prop: estimateY}. In this Lipschitz context, the proof is simpler since no BMO martingale arguments are necessary.

We now proceed to the proof of \eqref{eq: lipboundz}. Our aim is to generalize Proposition 5.5 of \cite{el1997backward} to the mean-field setting. We first treat the case where $g$ is $C^1$ in the $z$ variable (the general Lipschitz case follows by a standard mollification argument). By Malliavin calculus for Lipschitz BSDEs, the solution processes \( Y \) and \( Z \) will also be Malliavin differentiable, work on the product space $(\Omega\times\tilde\Omega,\F\otimes\tilde\F,\bbP\otimes\tilde\bbP)$ with Malliavin derivative $D$ taken with respect to the base Brownian motion on $\Omega$. Hence, for any random variable $\tilde X \in \tilde\F$ which is independent of $W$, we have $D\tilde X = 0$, thus, for $0\le \theta\le t\le T$,
\[
D_\theta Y_t
= D_\theta \xi
+ \int_t^T \widetilde{\E}\!\left[\partial_z g(s,Z_s,\widetilde Z_s)\right]\!\cdot D_\theta Z_s\,ds
- \int_t^T D_\theta Z_s  \!\cdot dW_s.
\]
 Under assumption $\Hlip$, we have \( \left|\partial_z g\right| \leq L_z \). Therefore, the Novikov condition is satisfied and $\widetilde{\E}\!\left[\partial_z g(s,Z_s,\widetilde Z_s)\right]\in\mathcal{F}_s$, and we may apply the Girsanov transform to obtain the following representation with new Brownian motion $W^\bbQ$ under the new measure \( \bbQ \):
\[
D_\theta Y_t
= D_\theta \xi - \int_t^T D_\theta Z_s  \!\cdot dW_s^\bbQ,\qquad 0\le \theta\le t\le T.
\]
Taking the conditional expectation under \( \mathbb{Q} \), we deduce
\begin{equation*}
    \abs{D_\theta Y_t} \leq \norm{D\xi}_{\bbS^\infty}.
\end{equation*} 
Finally, as $(Z_t)_{0\leq t\leq T}$ is a version of $(D_\theta Y_t)_{0\leq t\leq T}$ and due to the arbitrariness of $t$, we establish \eqref{eq: lipboundz}.
\end{proof}
\section{A priori estimates and stability for BSDEs}\label{Sec: Stability}
In this section, we establish stability estimates for the mean-field BSDE \eqref{mainBSDE} under two different quadratic structures of the generator. More precisely, we prove Theorem~\ref{thm: stability} under the \emph{unified} quadratic condition $\Hqunif$ (together with the additional terminal condition assumption $(\mathrm{H}_\xi)$), and Theorem~\ref{thm: stability2} under the \emph{separable} quadratic condition $\Hqsep$. Both results yield the same stability estimate in the Banach space \(\bbS^\infty\times\bbH^2_{\mathrm{BMO}}\). The overall strategy in both regimes is as follows. We first derive \emph{a priori} estimates for any solution \((Y,Z)\) to \eqref{mainBSDE}: an $\bbS^\infty$ bound for \(Y\) and a BMO bound for \(\int_0^\cdot Z_s\cdot dW_s\). These estimates, together with fundamental properties of BMO martingales, lead to the stability results. We begin with a growth bound for the generator, which will be used repeatedly throughout the section.

\begin{Remark}
    Under assumptions $\Hzero$ and ${\Hqunif}$, we have the following estimate:
    \begin{align*}
        |g(t,z,\tilde z)|
        &\le K_0 + K_z|z| + K_z|z|^2 + (K_z+K_{\tilde z})|z||\tilde z|
        + K_{\tilde z}|\tilde z| + K_{\tilde z}|\tilde z|^2,
    \end{align*}
    for any $(t,z,\tilde z)\in[0,T]\times\bbR^d\times\bbR^d$, where $K_0:=\sup_{t\in[0,T]}|g(t,0,0)|$. Taking expectations with respect to $\tilde{\bbP}$ yields:
    \begin{equation}\label{estimate}
        \tilde{\bbE}\,|g(t,Z_t,\tilde Z_t)|
        \le K_0 + \frac12 K_z + \frac12 K_{\tilde z}
        + \Big(2K_z+\frac12K_{\tilde z}\Big)|Z_t|^2
        + \Big(2K_{\tilde z} + \frac12K_z\Big)\tilde{\bbE}|\tilde Z_t|^2,
    \end{equation}
    where we used symmetry $\tilde{\bbE}|\tilde Z_t|=\E|Z_t|$, together with $|Z_t|\,\E|Z_t|
    \le \frac12|Z_t|^2+\frac12\E|Z_t|^2$.
\end{Remark}

We begin by noting that if the solution \( Y \in \bbS^\infty \), then for any solution pair \( (Y, Z) \in \bbS^\infty \times \bbH^2 \) to the quadratic mean-field BSDE, the stochastic integral \( \int_0^\cdot Z_s  \!\cdot dW_s \) defines a BMO martingale. Furthermore, the BMO norm of this martingale can be explicitly bounded in terms of the uniform norm \( \norm{Y}_{\bbS^\infty} \) and certain constants in the assumptions. 

\begin{proposition}\label{Prop: estimateZ}
Suppose that ${\Hzero}$ and ${\Hqunif}$ hold.
Let $(Y,Z)\in\bbS^\infty\times\bbH^2$ be a solution to the mean-field BSDE \emph{(\ref{mainBSDE})}.
Then the stochastic process \(\big(\int_0^t Z_s\cdot dW_s\big)_{0 \le t \le T}\) defines a BMO martingale.
Moreover, its BMO norm satisfies the bound
\begin{equation}\label{eq:BMOZ-Hqprime}
    \left\| \int_0^\cdot Z_s\cdot dW_s \right\|_{\mathrm{BMO}}^2
    \le (1\vee T)\,(4K_0+K_z+K_{\tilde z})\,
    \frac{e^{(4K_z+K_{\tilde z})(1+\|Y\|_{\mathbb{S}^\infty})}}{4K_z+K_{\tilde z}}
    \left(
      \frac{K_z+4K_{\tilde z}}{4K_z+K_{\tilde z}}\,e^{(4K_z+K_{\tilde z})\|Y\|_{\mathbb{S}^\infty}}
      +1
    \right).
\end{equation}
\end{proposition}
\begin{proof}
The argument follows Proposition~2.1 of \cite{briand2008stobsdes}, with constants adapted to the growth estimate
\eqref{estimate}.
Define
\[
A:=2K_z+\frac12K_{\tilde z},\qquad B:=2K_{\tilde z} +\frac12K_z,\qquad K:=K_0+\frac12(K_z+K_{\tilde z}).
\]
Recall that, under ${\Hzero}$ and ${\Hqunif}$, the estimate \eqref{estimate} reads
\begin{equation*}
\widetilde{\mathbb{E}}\,|g(t,Z_t,\tilde Z_t)|
\le K + A|Z_t|^2 + B\,\widetilde{\mathbb{E}}|\tilde Z_t|^2,
\qquad t\in[0,T].
\end{equation*}
Define the $C^2$ function with parameter $c\in\mathbb{R}$
\[
\varphi(x):=\frac{e^{\lambda |x|}-1-2c\lambda |x|}{\lambda^2},\qquad x\in\mathbb{R},
\]
where $\lambda:=2A=4K_z+K_{\tilde z}$.
In Step~1 we express the BMO norm of $Z$ in terms of its $\mathbb{H}^2$ norm; in Step~2 we derive an explicit
$\mathbb{H}^2$ bound in terms of the model coefficients and $\|Y\|_{\mathbb{S}^\infty}$.

\medskip
\noindent\textbf{Step 1 (Estimate of $\|Z\|_{\bbH^2_\mathrm{BMO}}$ in terms of $\|Z\|_{\mathbb{H}^2}$).}
Fix \(c=\tfrac12\). Then $\varphi(0)=0$ and, for all $x\in\mathbb{R}$,
\[
\varphi(x)\geq 0,\qquad
\varphi'(x)=\frac{e^{\lambda|x|}-1}{\lambda}\,\sgn(x),\qquad
\varphi''(x)=e^{\lambda|x|},\qquad
\varphi''(x)-\lambda\,|\varphi'(x)|\equiv 1.
\]
Moreover, for $|x|\le \|Y\|_{\mathbb{S}^\infty}$ we have the deterministic bound
\begin{equation}\label{eq:phi-bounds}
0\le |\varphi'(x)|\le C_Y:=\frac{1}{\lambda}\,e^{\lambda\|Y\|_{\mathbb{S}^\infty}}.
\end{equation}
Let $\tau$ be any stopping time. Applying It\^o's formula to $\varphi(Y)$ on $[\tau,T]$ and using
\eqref{estimate} together with $\widetilde{\mathbb{E}}|\tilde Z_s|^2=\mathbb{E}|Z_s|^2$ yields
\begin{align*}
\varphi(Y_\tau)
&=\varphi(\xi)
+\int_\tau^{T}\Big(-\varphi'(Y_s)\,\widetilde{\mathbb{E}}[g(s,Z_s,\tilde Z_s)]
-\tfrac12\,\varphi''(Y_s)|Z_s|^2\Big)\,ds
-\int_\tau^{T}\varphi'(Y_s)Z_s\cdot dW_s
\\
&\le \varphi(\xi)
+\int_\tau^{T}\Big(A|\varphi'(Y_s)|-\tfrac12\varphi''(Y_s)\Big)|Z_s|^2\,ds
+\int_\tau^{T}|\varphi'(Y_s)|\Big(K + B\,\mathbb{E}|Z_s|^2\Big)\,ds
\\
&\hspace{2cm}
-\int_\tau^{T}\varphi'(Y_s)Z_s\cdot dW_s,
\end{align*}
Since $\lambda=2A$ and $\varphi''-\lambda|\varphi'|\equiv 1$, we have
\[
A|\varphi'(Y_s)|-\tfrac12\varphi''(Y_s)
=-\tfrac12\big(\varphi''(Y_s)-2A|\varphi'(Y_s)|\big)=-\tfrac12.
\]
Taking $\mathbb{E}[\cdot\,|\,\mathcal{F}_\tau]$, using \eqref{eq:phi-bounds}, and also
$|\varphi(x)|\le C_Y|x|$ on the range of $Y$ (since $\varphi(0)=0$ and $\varphi'$ is bounded there), we obtain
\begin{align*}
\tfrac12\,\mathbb{E}\!\Big[\int_\tau^{T}|Z_s|^2\,ds\ \Big|\ \mathcal{F}_\tau\Big]
&\le \mathbb{E}\!\big[\varphi(\xi)-\varphi(Y_\tau)\,\big|\,\mathcal{F}_\tau\big]
+ C_Y K (T-\tau)+ B C_Y\int_0^T\mathbb{E}|Z_s|^2\,ds
\\
&\le C_Y\,\|\xi\|_{L^\infty}
+ C_Y K T+ B C_Y\int_0^T\mathbb{E}|Z_s|^2\,ds.
\end{align*}
Hence,
\begin{equation}\label{eq:step1-bmo}
\tfrac12\,\mathbb{E}\!\Big[\int_\tau^{T}|Z_s|^2\,ds\ \Big|\ \mathcal{F}_\tau\Big]
\le
C_Y\left(\|\xi\|_{L^\infty}
+ K\,T
+ B\int_0^{T}\mathbb{E}|Z_s|^2\,ds \right).
\end{equation}
Since $\tau$ is arbitrary, \eqref{eq:step1-bmo} shows that
$\big\|\int_0^\cdot Z_s\cdot dW_s\big\|_{\mathrm{BMO}}^2$ is controlled by $\|Z\|_{\mathbb{H}^2}^2$ and model parameters.

\medskip
\noindent\textbf{Step 2 (An explicit $\mathbb{H}^2$ bound).}
We assume (without loss of generality, by enlarging $K_z$ relative to $K_{\tilde z}$)
that
\begin{equation}\label{eq:alpha-cond}
0<\alpha<\min\Big\{\tfrac12,\ \frac{e^{\lambda}-2}{e^{\lambda\|Y\|_{\bbS^\infty}}}\Big\}.
\end{equation}
Define
\[
\widehat C_Y := \frac{e^{\lambda(1+\|Y\|_{\bbS^\infty})}}{\lambda},\qquad
c_1:=B\,\widehat C_Y+\frac12,\qquad
c_2:=\frac12e^{\lambda}.
\]
Then \eqref{eq:alpha-cond} implies $c_1<c_2$. Fix any $c\in(c_1,c_2)$ and consider the shifted $C^2$ function
\begin{equation}\label{eq:phi-shift}
\varphi(x):=\frac{e^{\lambda(1+|x|)}-1-2c\lambda(1+|x|)}{\lambda^2},\qquad x\in\bbR.
\end{equation}
For $x\neq 0$ we have
\[
\varphi'(x)=\frac{e^{\lambda(1+|x|)}-2c}{\lambda}\,\sgn(x),\qquad
\varphi''(x)=e^{\lambda(1+|x|)},
\qquad\text{and}\qquad
\varphi''-\lambda|\varphi'|\equiv 2c,
\]
and the same identities hold in the generalized It\^o--Tanaka sense at $x=0$.
Moreover, since $1+|x|\ge 1$, we have for all $x\in\bbR$,
\[
|\varphi'(x)|
=\frac{e^{\lambda(1+|x|)}-2c}{\lambda}
>\frac{e^{\lambda(1+|x|)}-2c_2}{\lambda}\ge 0,
\]
and for $|x|\le \|Y\|_{\bbS^\infty}$,
\[
|\varphi'(x)|\le \frac{e^{\lambda(1+\|Y\|_{\bbS^\infty})}}{\lambda}=\widehat C_Y.
\]
Applying It\^o's formula to $\varphi(Y)$ on $[0,T]$ (as in Step~1, with $\tau=0$) and using \eqref{estimate} yields
\begin{align*}
\varphi(Y_0)
&\le \varphi(\xi) - \int_0^T c\,|Z_s|^2\,ds
+ B\int_0^T |\varphi'(Y_s)|\,\E|Z_s|^2\,ds
+ K\int_0^T |\varphi'(Y_s)|\,ds
- \int_0^T \varphi'(Y_s)Z_s\cdot dW_s.
\end{align*}
Taking expectations and using $|\varphi'(Y_s)|\le \widehat C_Y$ gives
\[
\varphi(Y_0)
\le \E[\varphi(\xi)] + K\widehat C_Y T
- c\,\E\!\left[\int_0^T |Z_s|^2\,ds\right]
+ B\widehat C_Y\,\E\!\left[\int_0^T |Z_s|^2\,ds\right].
\]
Since $c>c_1=B\widehat C_Y+\tfrac12$, we can absorb the last term and obtain
\begin{equation*}
\frac12\,\E\!\left[\int_0^T |Z_s|^2\,ds\right]
\le \E[\varphi(\xi)] + K\widehat C_Y T.
\end{equation*}
Finally, since $\varphi(0)=0$ and $|\varphi'(x)|\le \widehat C_Y$ for $|x|\le \|Y\|_{\bbS^\infty}$,
we have $|\varphi(x)|\le \widehat C_Y|x|$, hence
\begin{equation}\label{eq:H2-final}
    \frac12\,\E\!\left[\int_0^T |Z_s|^2\,ds\right]
\le \widehat C_Y\big(\|\xi\|_{L^\infty}+KT\big).
\end{equation}
Finally, plugging \eqref{eq:H2-final} into \eqref{eq:step1-bmo} and taking the essential supremum over $\tau$ completes the proof.
\end{proof}

Next, suppose that the solution satisfies \( Z \in \bbH^2_{\mathrm{BMO}} \). Then, for any solution pair \( (Y, Z) \in \bbS^2 \times \bbH^2_{\mathrm{BMO}} \) to the quadratic mean-field BSDE, we derive the following proposition showing that \( Y \in \bbS^\infty \), with a bound that is independent of the BMO norm \( \norm{\int_0^\cdot Z_s  \!\cdot dW_s }_{\mathrm{BMO}} \). 


\begin{proposition} \label{Prop: estimateY}
    Suppose Assumptions $\Hzero$ and ${\Hqunif}$ hold. Then for any solution
    \( (Y, Z) \in \bbS^2 \times \bbH^2_{\mathrm{BMO}} \) of BSDE~\eqref{mainBSDE}, we have \( Y \in \bbS^\infty \) and
    \begin{equation}
        \norm{Y}_{\bbS^\infty} \leq \norm{\xi}_{L^\infty}.
    \end{equation}
\end{proposition}
\begin{proof}
The proof relies on a linearization argument combined with the BMO property.
Using Remark~\ref{rem:shifted-generator}, without loss of generality we assume
\[
g(t,0,\tilde z)=0,\qquad \forall (t,\tilde z)\in[0,T]\times\bbR^d.
\]
Then BSDE~\eqref{mainBSDE} can be rewritten as
\begin{equation}\label{linearBSDE}
    Y_t
    = \xi + \int_t^T \Big( \int_{\tilde{\Omega}} L_s \, d\tilde{\bbP} \Big) \cdot Z_s \, ds
      - \int_t^T Z_s  \!\cdot dW_s,
    \qquad t \in [0, T],
\end{equation}
where the process $(L_s)_{0\le s\le T}$ is defined by
\[
L_s:=\frac{g(s, Z_s, \tilde Z_s) - g(s, 0, \tilde Z_s)}{|Z_s|^2}\, Z_s\, \mathbf{1}_{\{Z_s\neq 0\}},
\qquad 0 \leq s \leq T.
\]
Since $g(s,0,\tilde z)=0$ by construction, the numerator is simply $g(s,Z_s,\tilde Z_s)$. Moreover, by Assumption ${\Hqunif}$, we have for any $(s,z,\tilde z)$,
\[
|g(s,z,\tilde z)-g(s,0,\tilde z)|
\le K_z\big(1+|z|+|\tilde z|\big)\,|z|.
\]
Therefore,
\begin{equation}\label{eq:bound-EL-Hqprime}
\Big|\int_{\tilde{\Omega}} L_s \, d\tilde{\bbP}\Big|
\le K_z\big(1+|Z_s|+\E|Z_s|\big),\qquad 0\le s\le T.
\end{equation}
Define the process
\[
M_t:=\int_0^t \Big( \int_{\tilde{\Omega}} L_s \, d\tilde{\bbP} \Big)\cdot dW_s,\qquad t\in[0,T].
\]
We claim that $M$ is a BMO martingale. Indeed, by \eqref{eq:bound-EL-Hqprime} and $(a+b+c)^2\le 3(a^2+b^2+c^2)$,
\[
\Big|\int_{\tilde{\Omega}} L_s \, d\tilde{\bbP}\Big|^2
\le 3K_z^2\Big(1+|Z_s|^2+\E|Z_s|^2\Big).
\]
Hence, for any stopping time $\tau$,
\begin{align*}
\E\!\Big[\langle M\rangle_T-\langle M\rangle_\tau\,\big|\,\F_\tau\Big]
&=\E\!\Big[\int_\tau^T \Big|\int_{\tilde{\Omega}} L_s \, d\tilde{\bbP}\Big|^2 ds \,\Big|\,\F_\tau\Big] \\
&\le 3K_z^2\Big(
T + \E\!\Big[\int_\tau^T |Z_s|^2ds\,\Big|\,\F_\tau\Big] + \int_0^T \E|Z_s|^2ds
\Big).
\end{align*}
Since $Z\in\bbH^2_{\mathrm{BMO}}$, and $\bbH^2 \subset \bbH^2_{\mathrm{BMO}}$. Therefore $\|M\|_{\mathrm{BMO}}<\infty$, proving the claim.
By Kazamaki's condition, the stochastic exponential $\mathcal{E}(M)$ is a uniformly integrable martingale.
Define the probability measure $\bbP^L$ by $d\bbP^L:=\mathcal{E}(M)_T\,d\bbP$; then $\bbP^L$ is equivalent to $\bbP$.
By Girsanov's theorem, the process
\[
W_t^L := W_t + \int_0^t \Big(\int_{\tilde{\Omega}} L_s \, d\tilde{\bbP} \Big)\, ds,\qquad t\in[0,T],
\]
is a $d$--dimensional Brownian motion under $\bbP^L$, and \eqref{linearBSDE} becomes
\[
Y_t=\xi-\int_t^T Z_s\cdot dW_s^L,\qquad 0\le t\le T.
\]
Thus
\[
Y_t=\E^{\bbP^L}\big[\xi\,\big|\,\F_t\big],\qquad 0\le t\le T,
\]
and consequently $|Y_t|\le \|\xi\|_{L^\infty}$ for all $t\in[0,T]$, $\bbP^L$--a.s.
Since $\bbP^L\sim \bbP$, the same bound holds $\bbP$--a.s.,
and taking the essential supremum over $t\in[0,T]$ completes the proof.
\end{proof}
We now state a direct consequence of the previous two propositions.
\begin{corollary}\label{Cor: YZ estimates}
    Let $\Hzero$ and ${\Hqunif}$ hold. Then, for any solution pair $(Y, Z)$ to the quadratic BSDE \eqref{mainBSDE}, if either
    \[
        (Y,Z)\in \bbS^\infty \times \bbH^2
        \quad\text{or}\quad
        (Y,Z)\in \bbS^2 \times \bbH^2_{\mathrm{BMO}},
    \]
    it follows that $(Y,Z) \in \bbS^\infty \times \bbH^2_{\mathrm{BMO}}$ and
    \begin{equation}
        \label{eq:YZ-joint-bound}
        \norm{Y}_{\bbS^\infty}
        + \Big\| \textstyle\int_0^\cdot Z_s \cdot d W_s \Big\|_{\mathrm{BMO}}
        \leq C,
    \end{equation}
    where $C$ is a constant depending only on $(T,K_z,K_{\tilde z}, K_0)$.
\end{corollary}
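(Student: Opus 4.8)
The plan is to read this off directly from Propositions~\ref{Prop: estimateZ} and~\ref{Prop: estimateY} by a short bootstrap, handling the two hypotheses symmetrically. Throughout one works with the normalised generator $g(t,0,\tilde z)=0$ of Remark~\ref{rem:shifted-generator}, which is the standing convention of this section and precisely the setting in which Proposition~\ref{Prop: estimateY} is stated. The only delicate point is an apparent circularity: the explicit $\mathrm{BMO}$ estimate of Proposition~\ref{Prop: estimateZ} is expressed in terms of $\|Y\|_{\bbS^\infty}$, while Proposition~\ref{Prop: estimateY} needs $Z\in\bbH^2_{\mathrm{BMO}}$ before it can assert $\|Y\|_{\bbS^\infty}\le\|\xi\|_{L^\infty}$. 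The resolution is that, in each of the two regimes, one of the propositions applies using only the integrability assumed, and its conclusion then supplies the hypothesis the other one is missing.

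First I would treat the case $(Y,Z)\in\bbS^\infty\times\bbH^2$. Here $Y\in\bbS^\infty$, so Proposition~\ref{Prop: estimateZ} applies as stated and gives that $\int_0^\cdot Z_s\cdot dW_s$ is a $\mathrm{BMO}$ martingale, i.e.\ $Z\in\bbH^2_{\mathrm{BMO}}$; since trivially $\bbS^\infty\subset\bbS^2$, we are in the hypotheses of Proposition~\ref{Prop: estimateY}, which yields $\|Y\|_{\bbS^\infty}\le\|\xi\|_{L^\infty}$. Substituting this bound back into the estimate of Proposition~\ref{Prop: estimateZ}, and using that its right-hand side is nondecreasing in $\|Y\|_{\bbS^\infty}$ (all constants there are positive), one gets an explicit bound for $\|\int_0^\cdot Z_s\cdot dW_s\|_{\mathrm{BMO}}$ depending only on $\|\xi\|_{L^\infty}$, $T$, $K_0$, $K_z$ and $K_{\tilde z}$. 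The case $(Y,Z)\in\bbS^2\times\bbH^2_{\mathrm{BMO}}$ is the mirror image: now $Z\in\bbH^2_{\mathrm{BMO}}$, so Proposition~\ref{Prop: estimateY} applies immediately and gives $Y\in\bbS^\infty$ with $\|Y\|_{\bbS^\infty}\le\|\xi\|_{L^\infty}$; since $\bbH^2_{\mathrm{BMO}}\subset\bbH^2$, Proposition~\ref{Prop: estimateZ} then applies and, by the same monotonicity, produces the $\mathrm{BMO}$ bound in terms of $\|\xi\|_{L^\infty}$ and the model constants. Adding the two estimates gives \eqref{eq:YZ-joint-bound}, with $C$ of the asserted form; one explicit admissible choice is $\|\xi\|_{L^\infty}$ plus the right-hand side of the bound in Proposition~\ref{Prop: estimateZ} with $\|Y\|_{\bbS^\infty}$ replaced by $\|\xi\|_{L^\infty}$.

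There is no substantive obstacle here, since all the analytic work lives in the two preceding propositions; the ``hard part'' is purely organisational, namely making the chaining non-circular (addressed above by starting, in each regime, from whichever proposition requires only the hypothesis in force) and recording the elementary inclusions $\bbS^\infty\subset\bbS^2$ and $\bbH^2_{\mathrm{BMO}}\subset\bbH^2$ together with the monotonicity of the explicit constant of Proposition~\ref{Prop: estimateZ} in $\|Y\|_{\bbS^\infty}$, which is what legitimises substituting the a priori $\bbS^\infty$-bound on $Y$ into it.
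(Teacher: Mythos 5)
Your proposal is correct and follows essentially the same route as the paper's own proof: both cases are handled by chaining Proposition~\ref{Prop: estimateZ} and Proposition~\ref{Prop: estimateY} in the order dictated by which hypothesis is in force, then feeding the resulting $\|Y\|_{\bbS^\infty}\le\|\xi\|_{L^\infty}$ bound back into the explicit BMO estimate. Your version is slightly more explicit than the paper's sketch about the non-circularity of the bootstrap and the monotonicity of the constant in $\|Y\|_{\bbS^\infty}$, but there is no substantive difference.
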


\begin{proof}
We only sketch how the two estimates interact here. If $(Y,Z)\in \bbS^\infty \times \bbH^2$, then Proposition~\ref{Prop: estimateZ} yields
\[
Z \in \bbH^2_{\mathrm{BMO}}
\quad\text{and}\quad
\Big\|\textstyle\int_0^\cdot Z_s  \!\cdot dW_s\Big\|_{\mathrm{BMO}}
\ \text{is bounded in terms of }\|Y\|_{\bbS^\infty}.
\]
Since we are now in the setting of Proposition~\ref{Prop: estimateY}, we obtain an $\bbS^\infty$ estimate for $Y$ which depends only on $\norm{\xi}_{L^\infty}$. Combining these two facts shows that both $\|Y\|_{\bbS^\infty}$ and the BMO norm of $\int Z\,dW$ are bounded by some constant $C$ depending only on the model parameters ($K_0, K_z, K_{\tilde z}, T$). The other case $(Y,Z)\in \bbS^2 \times \bbH^2_{\mathrm{BMO}}$ is the same.
\end{proof}
\begin{corollary}\label{cor:qvbound}
Assume $(Y,Z)\in\bbS^\infty\times\bbH^2_{\mathrm{BMO}}$ is a solution to the BSDE \eqref{mainBSDE} with $\xi$ satisfying:
\begin{itemize}
	\item[$({\rm H_{\xi}})$] There is a process  $H\in\bbH^\infty$ satisfying
	\[
	\xi=\bbE\xi+\int_0^TH_sdW_s.
	\]
\end{itemize}
Then $Z\cdot W$ is sliceable and moreover, $\norm{\int_0^T|Z_s|^2ds}_{L^\infty}<\infty.$
\end{corollary}
\begin{proof}
	From the proof of Proposition \ref{Prop: estimateY}, we have
	\[
	Y_t=\xi-\int_t^TZ_s\cdot dW^L_s.
	\]
	In other words,
	\[
\int_0^tZ_s\cdot dW^L_s = Y_t-Y_0=\bbE^L_t[\xi]- \bbE^L[\xi].
	\]
	Since the quadratic variation does not change under an equivalent change of measure, we have
	\[
	\norm{\int_0^T|Z_s|^2ds}_{L^\infty}=\norm{\int_0^T|H_s|^2ds}_{L^\infty}<\infty.
	\]
	Since the class of sliceable BMO martingales is the BMO closure of martingales with bounded quadratic variation, this shows that $Z\cdot W$ is sliceable.
\end{proof}
\begin{theorem}\label{thm: stability}
For $i=1,2$, consider $(Y^i,Z^i)\in\bbS^\infty\times\bbH^2_{\mathrm{BMO}}$ solve
\[
Y_t^i=\xi^i+\int_t^T \!\int_{\tilde{\Omega}} g(s,Z_s^i,\tilde{Z}_s^i)\,d\tilde{\bbP}\,ds-\int_t^T Z_s^i\cdot dW_s,\qquad t\in[0,T],
\]
with $\xi^1,\xi^2,g$ satisfying $(\mathrm{H}_0), (\mathrm{H}_q)$ and $(\mathrm{H}_\xi)$.
Then there exists a constant $C> 0$ such that
\[
\|Y^1-Y^2\|_{\bbS^{\infty}}+\|Z^1-Z^2\|_{\bbH^2_{\mathrm{BMO}}}
\ \le\ C\;\|\xi^1-\xi^2\|_{L^\infty},
\]
where $C$ depends only on $(T, K_z, K_{\tilde z}, K_0)$ and the quadratic variation bounds of $\bbE_t[\xi^1]$ and $\bbE_t[\xi^2]$ in Corollary~\ref{cor:qvbound}.
\end{theorem}
\begin{proof}
    Let $\Delta Y:=Y^1-Y^2$, $\Delta Z:=Z^1-Z^2$, $\Delta\xi:=\xi^1-\xi^2$ and
\[
\Delta g(s,Z_s,\tilde Z_s):=g(s,Z_s^1,\tilde Z_s^1)-g(s,Z_s^2,\tilde Z_s^2),\qquad s\in[0,T].
\]
We consider a sequence of stopping times $0=\tau_0\leq\tau_1\leq\cdots\leq\tau_N=T$ that slices the BMO martingales
$Z^1\cdot W$ and $Z^2\cdot W$ into pieces with BMO norm of size $\veps$ and such that $\tau_{j+1}-\tau_j<\veps$ for all
$j=0,1,\dots,N-1$. 
Then, for $\tau_j\leq t\leq \tau_{j+1}$, we obtain
\begin{equation}\label{eq:delBSDE}
\Delta Y_t=\Delta Y_{\tau_{j+1}}+\int_t^{\tau_{j+1}} \!\int_{\tilde\Omega}\!\Delta g(s,Z_s,\tilde Z_s)\,d\tilde{\bbP}\,ds
-\int_t^{\tau_{j+1}} \Delta Z_s\cdot dW_s.
\end{equation}
We define the \emph{sliced BMO norm} on $[\tau_j,\tau_{j+1}]$ by
\[
\big\|Z^i\cdot W\big\|_{\mathrm{BMO}_j(\bbP)}
:=\sup_{\tau\in\scT}\Big\|\bbE_\tau\Big[\int_\tau^{\tau_{j+1}}|Z^i_s|^2\,ds\Big]\Big\|_{L^\infty}^{1/2},
\]
where $\scT$ denotes the collection of all stopping times valued in $[\tau_j,\tau_{j+1}]$. Similarly,
$\Lambda:=\mathcal{E}(\int_0^\cdot\widetilde{\bbE}[L_s]\cdot dW_s)$ defines an equivalent probability measure $\bbP^L$ by Girsanov's theorem with Brownian motion $W^L_t:=W_t+\int_0^t\widetilde{\bbE}[L_s]\,ds$, we set
\[
\big\|Z^i\cdot W^L\big\|_{\mathrm{BMO}_j(\bbP^L)}
:=\sup_{\tau\in\scT_j}\Big\|\bbE_\tau^{\bbP^L}\Big[\int_\tau^{\tau_{j+1}}|Z^i_s|^2\,ds\Big]\Big\|_{L^\infty}^{1/2}.
\]
According to Proposition~\ref{Prop: estimateZ}, we have $Z^1,Z^2\in\bbH^2_{\mathrm{BMO}}$, 
so in particular $\big(\int_0^t \tilde{\bbE}[L_s]\cdot dW_s\big)_{t\in[0,T]}$ 
is a $\mathrm{BMO}$ martingale. Moreover, by Corollary~\ref{Cor: YZ estimates} and Lemma \ref{lem:BMO-change-measure}, there exists a constant 
$C>0$, depending only on $(T,K_z,K_{\tilde z}, K_0)$, such that
\begin{equation}\label{eq:BMO-bound}
\begin{aligned}
	\Big\|\int_0^\cdot \tilde{\bbE}[L_s]\cdot dW_s\Big\|_{\mathrm{BMO}}
&+\Big\|\int_0^\cdot Z^1_s\cdot dW_s\Big\|_{\mathrm{BMO}}
+\Big\|\int_0^\cdot Z^2_s\cdot dW_s\Big\|_{\mathrm{BMO}}
\;\le\; C,\\
\frac{1}{C}\norm{Z^i\cdot W^L}_{\mathrm{BMO}_{j}(\bbP^L)}&\leq\norm{Z^i\cdot W}_{\mathrm{BMO}_{j}(\bbP)}\leq C\norm{Z^i\cdot W^L}_{\mathrm{BMO}_{j}(\bbP^L)},\qquad i=1,2,\ j=0,1,\dots,N-1,
\end{aligned}
\end{equation}
Under $\bbP^L$ we can rewrite \eqref{eq:delBSDE} as
\begin{equation}\label{eq:lin-under-PL-last}
\Delta Y_t=\Delta Y_{\tau_{j+1}}+\int_t^{\tau_{j+1}} \widetilde{\bbE}[\tilde L_s\,\Delta\tilde Z_s]\,ds
-\int_t^{\tau_{j+1}} \Delta Z_s\cdot dW^L_s,\qquad \tau_j\le t\le \tau_{j+1}.
\end{equation}
By the It\^o formula, for any $t\in[\tau_j,\tau_{j+1}]$, we have
\begin{equation}\label{itostab}
	\begin{aligned}
	|\Delta Y_t|^2+\bbE^L_t\int_t^{\tau_{j+1}}|\Delta Z_s|^2ds	&\leq \bbE^L_t|\Delta Y_{\tau_{j+1}}|^2+2\bbE^L_t\int_t^{\tau_{j+1}}|\Delta Y_s| \abs{\widetilde{\bbE}[\tilde L_s\,\Delta\tilde Z_s]}ds\\
	&\leq \bbE^L_t|\Delta Y_{\tau_{j+1}}|^2+2\norm{\sup_{s\in[t,\tau_{j+1}]}|\Delta Y_s|}_{L^\infty}\bbE^L_t\int_t^{\tau_{j+1}}\widetilde{\bbE}[\tilde L_s\,\Delta\tilde Z_s]ds.
\end{aligned}
\end{equation}
Note that there exists a constant $A$ such that
\begin{align*}
	\abs{\widetilde{\bbE}[\tilde L_s\,\Delta\tilde Z_s]}&\leq A\widetilde{\bbE}[(1+|Z^1_s|+|Z^2_s|+|\tilde Z^1_s|+|\tilde Z^2_s|)|\Delta\tilde Z_s|]\\
	&\leq A\edg{\brak{|Z^1_s|+|Z^2_s|}\bbE|\Delta Z_s|+\bbE[(1+| Z^1_s|+| Z^2_s|)|\Delta Z_s|]}.
\end{align*}
The Fefferman's inequality for BMO martingales yields that
{\small
\begin{align*}
	\int_t^{\tau_{j+1}} \bbE\edg{(1+|Z_s^1|+|Z_s^2|)|\Delta Z_s|}ds
    &\leq \sqrt{2}\bbE\edg{\brak{\int_t^{\tau_{j+1}}(1+|Z_s^1|+|Z_s^2|)^2ds}^{1/2}}\norm{\Delta Z\cdot W}_{\mathrm{BMO}_{j}(\bbP)}\\
	&\leq \sqrt{6} C\brak{\veps+\norm{Z^1\cdot W}_{\mathrm{BMO}_{j}(\bbP)}+\norm{Z^2\cdot W}_{\mathrm{BMO}_{j}(\bbP)}}\norm{\Delta Z\cdot W^L}_{\mathrm{BMO}_{j}(\bbP^L)}\\
	&\leq 3\sqrt{6} C\veps\norm{\Delta Z\cdot W^L}_{\mathrm{BMO}_{j}(\bbP^L)}.
\end{align*}}
On the other hand,
{\small\begin{align*}
	\bbE^L_t\int_t^{\tau_{j+1}}\brak{|Z^1_s|+|Z^2_s|}\bbE|\Delta Z_s|ds
    &\leq\sqrt{2}\bbE^L_t\brak{\int_t^{\tau_{j+1}}\brak{|Z^1_s|^2+|Z^2_s|^2}ds}^{1/2}\brak{\int_t^{\tau_{j+1}}\bbE|\Delta Z_s|^2ds}^{1/2} \\
	&\leq \sqrt{2}C\brak{\norm{Z^1\cdot W^L}_{\mathrm{BMO}_{j}(\bbP^L)}+\norm{Z^2\cdot W^L}_{\mathrm{BMO}_{j}(\bbP^L)}}\norm{\Delta Z\cdot W^L}_{\mathrm{BMO}_{j}(\bbP^L)}\\
	&\leq 2\sqrt{2} C^2\veps\norm{\Delta Z\cdot W^L}_{\mathrm{BMO}_{j}(\bbP^L)}.
\end{align*}}
As a result, \eqref{itostab} becomes
{\small\begin{align*}
	|\Delta Y_t|^2+\bbE^L_t\int_t^{\tau_{j+1}}|\Delta Z_s|^2ds
	&\leq \bbE^L_t|\Delta Y_{\tau_{j+1}}|^2
    +4A(3\sqrt{6} C+2\sqrt{2} C^2)\veps\norm{\sup_{s\in[t,{\tau_{j+1}}]}|\Delta Y_s|}_{L^\infty}\norm{\Delta Z\cdot W^L}_{\mathrm{BMO}_{j}(\bbP^L)}\\
	&\leq \norm{\Delta Y_{\tau_{j+1}}}^2_{L^\infty}
    +\frac{1}{4} \brak{\norm{\sup_{s\in[{\tau_{j}},{\tau_{j+1}}]}|\Delta Y_s|}^2_{L^\infty}+\norm{\Delta Z\cdot W^L}_{\mathrm{BMO}_{j}( \bbP^L)}^2},
\end{align*}}
by letting $\veps$ small enough.
Then, we have $\norm{\sup_{s\in[{\tau_{j}},{\tau_{j+1}}]}|\Delta Y_s|^2}_{L^\infty}$ and $\norm{\Delta Z\cdot W^L}_{\mathrm{BMO}_{j}(\bbP^L)}^2$ are both bounded by
    \begin{align*}
\norm{\Delta Y_{\tau_{j+1}}}^2_{L^\infty}
+\frac{1}{4} \norm{\sup_{s\in[{\tau_{j}},{\tau_{j+1}}]}|\Delta Y_s|}^2_{L^\infty}
+\frac{1}{4}\norm{\Delta Z\cdot W^L}_{\mathrm{BMO}_{j}( \bbP^L)}^2.
    \end{align*}
This implies that
\begin{align*}
	\norm{\sup_{s\in[{\tau_{j}},{\tau_{j+1}}]}|\Delta Y_s|^2}_{L^\infty}+\norm{\Delta Z\cdot W^L}_{\mathrm{BMO}_{j}( \bbP^L)}^2\leq 4\norm{\Delta Y_{\tau_{j+1}}}^2_{L^\infty}.
\end{align*}
Applying the above estimates iteratively from $T$ to $0$, we obtain the estimate
\[
\norm{\sup_{s\in[0,T]}|\Delta Y_s|^2}_{L^\infty}+\norm{\Delta Z\cdot W^L}_{\mathrm{BMO}(\bbP^L)}^2\leq N4^N\norm{\Delta\xi}_{L^\infty}^2,
\]
where $N$ depends only on $(T, K_z, K_{\tilde z}, K_0)$. Another application of Lemma~\ref{lem:BMO-change-measure} proves the claim.
\end{proof}

We can remove $\mathrm{(H_\xi)}$ under a more restrictive structural assumption on $g$.
\begin{theorem}\label{thm: stability2}
For $i=1,2$, consider $(Y^i,Z^i)\in\bbS^\infty\times\bbH^2_{\mathrm{BMO}}$ solve
\[
Y_t^i=\xi^i+\int_t^T \!\int_{\tilde{\Omega}} g(s,Z_s^i,\tilde{Z}_s^i)\,d\tilde{\bbP}\,ds-\int_t^T Z_s^i\cdot dW_s,\qquad t\in[0,T],
\]
with $\xi^1,\xi^2,g$ satisfying $(\mathrm{H}_0)$ and $\Hqsep$.
Then there exists a constant $C> 0$ such that
\[
\|Y^1-Y^2\|_{\bbS^{\infty}}+\|Z^1-Z^2\|_{\bbH^2_{\mathrm{BMO}}}
\ \le\ C\;\|\xi^1-\xi^2\|_{L^\infty},
\]
where $C$ depends only on $(T, K_z, K_{\tilde z}, K_0)$.	
\end{theorem}
\begin{proof}
In the previous proof, we can take $\tau_j=\frac{jT}{N}$, where $N$ is large enough. Consider $u=\tau_j$ and $v=\tau_{j+1}$. By the It\^o formula, for any $t\in[u,v]$, we have
	\begin{align}\label{itostab2}\notag
		|\Delta Y_t|^2+\bbE^L_t\int_t^v|\Delta Z_s|^2ds	&\leq \bbE^L_t|\Delta Y_v|^2+2K_{\tilde z}\bbE^L_t\int_t^v|\Delta Y_s| \bbE\edg{(1+|Z_s^1|+|Z_s^2|)|\Delta Z_s|}ds\\\notag
		&\leq \bbE^L_t|\Delta Y_v|^2+2K_{\tilde z}\bbE^L_t\sup_{s\in[t,v]}|\Delta Y_s|\int_t^v \bbE\edg{(1+|Z_s^1|+|Z_s^2|)|\Delta Z_s|}ds\\
		&\leq \bbE^L_t|\Delta Y_v|^2+2K_{\tilde z}\brak{\bbE^L_t\sup_{s\in[u,v]}|\Delta Y_s|}\bbE\int_u^v \edg{(1+|Z_s^1|+|Z_s^2|)|\Delta Z_s|}ds.
	\end{align}
	On the other hand, by the Fefferman's inequality for BMO martingale, we have
	\begin{align*}
		\bbE\int_u^v \edg{(1+|Z_s^1|+|Z_s^2|)|\Delta Z_s|}ds\leq \sqrt{2}\bbE\edg{\brak{\int_u^v(1+|Z_s^1|+|Z_s^2|)^2ds}^{1/2}}\norm{\Delta Z\cdot W}_{\mathrm{BMO}_{[u,v]}(\bbP)},
	\end{align*}
	where 
	\[
	\norm{\Delta Z\cdot W}_{\mathrm{BMO}_{[u,v]}(\bbP)}=\brak{\sup_{\tau\in\scT}\norm{\bbE_\tau\int_{\tau}^v|\Delta Z|^2ds}_{L^\infty}}^{1/2}
	\]
	and $\scT$ denotes the collection of all stopping times valued in $[u,v]$. By Lemma \ref{lem:BMO-change-measure} applied on the interval $[u,v]$ and Corollary \ref{Cor: YZ estimates}, there exists a constant $C$, which only depends on $(T,K_z,K_{\tilde z}, K_0)$, such that
	\begin{align*}
		\norm{\Delta Z\cdot W}_{\mathrm{BMO}_{[u,v]}(\bbP)}\leq \frac{C}{2\sqrt{2}K_{\tilde z}}\norm{\Delta Z\cdot W^L}_{\mathrm{BMO}_{[u,v]}(\tilde \bbP)}.
	\end{align*}
	In addition, since $t\mapsto \brak{\bbE\int_0^t(1+|Z_s^1|+|Z_s^2|)^2ds}^{1/2}$
	is uniformly continuous, for any $\veps>0$,  there exists $\delta>0$ such that $v-u<\delta$ implies
	\begin{align}\label{unifcont}
		\brak{\bbE\int_u^v(1+|Z_s^1|+|Z_s^2|)^2ds}^{1/2}<\frac{\veps}{C}.
	\end{align}
	As a result, \eqref{itostab2} becomes
	\begin{align*}
		|\Delta Y_t|^2+\bbE^L_t\int_t^v|\Delta Z_s|^2ds&\leq\bbE^L_t|\Delta Y_v|^2+\veps\bbE^L_t\sup_{s\in[u,v]}|\Delta Y_s|\norm{\Delta Z\cdot W^L}_{\mathrm{BMO}_{[u,v]}(\tilde \bbP)}\\
		&\leq \norm{\Delta Y_v}^2_{L^\infty}+\frac{\veps}{2} \norm{\sup_{s\in[u,v]}|\Delta Y_s|}^2_{L^\infty}+\frac{\veps}{2}\norm{\Delta Z\cdot W^L}_{\mathrm{BMO}_{[u,v]}(\tilde \bbP)}^2.
	\end{align*}
	By taking $\veps =1/2$, by the similar argument in the proof of previous theorem, we have
	\begin{align*}
		\norm{\sup_{s\in[u,v]}|\Delta Y_s|^2}_{L^\infty}+\norm{\Delta Z\cdot W^L}_{\mathrm{BMO}_{[u,v]}(\tilde \bbP)}^2\leq 4\norm{\Delta Y_v}^2_{L^\infty}.
	\end{align*}
	If we partition $[0,T]$ into intervals of size less than $\delta$ and apply the above estimates iteratively from $T$ to $0$, we obtain the estimate
	\[
	\norm{\sup_{s\in[0,T]}|\Delta Y_s|^2}_{L^\infty}+\norm{\Delta Z\cdot W^L}_{\mathrm{BMO}(\tilde \bbP)}^2\leq N4^N\norm{\Delta\xi}_{L^\infty}^2,
	\]
	where $N$ is the number of subintervals in the partition. Another application of Lemma~\ref{lem:BMO-change-measure} proves the claim.
\end{proof}
\section{Proof: existence and uniqueness} \label{Sec: ExistenceUniqueness}
In this section, we prove the existence and uniqueness results for the mean-field BSDE \eqref{mainBSDE} in the two quadratic regimes introduced earlier. Under Assumptions ${\Hzero}$ and ${\Hqunif}$, we establish Theorem~\ref{thm: mainthm2} for terminal conditions of the form $\xi=\phi(W_{t_1},\ldots,W_{t_n})$ with $\phi$ bounded and $\alpha$-H\"older continuous. The generalization to Theorem~\ref{thm:mainHolder} can be achieved by measure-change and time-change. Under Assumptions ${\Hzero}$ and ${\Hqsep}$, we establish Theorem~\ref{thm: mainthm3} for terminal conditions of the form $\xi=\phi(W_{[0,T]})$ with $\phi$ bounded and uniformly continuous in the sup norm.

The proofs follow the same general scheme, but with different approximation methods adapted to the corresponding terminal regularity. We first prove in Lemma~\ref{Lem: Malliavin} that if the terminal condition $\xi$ is Malliavin differentiable with uniformly bounded derivative, then \eqref{mainBSDE} admits a solution in $\bbS^\infty\times\bbS^\infty$. The argument is based on truncating the quadratic generator to obtain Lipschitz generator.

Next, we approximate the desired terminal condition by a sequence $(\xi^n)$ with bounded Malliavin derivatives. In the unified case $\Hqunif$, we use mollification of the H\"older function $\phi$ on $(\bbR^d)^n$ and verify, via Lemma~\ref{lem:holdercontxi}, the uniform quadratic-variation bounds required in Theorem~\ref{thm: stability}. In the separable case $\Hqsep$, we use the Moreau--Yosida approximation on path space, which yields Lipschitz functionals with bounded Malliavin derivatives. For each $n$, Lemma~\ref{Lem: Malliavin} provides a solution \((Y^n,Z^n)\), and the corresponding stability theorem (Theorem~\ref{thm: stability} under $\Hqunif$\ or Theorem~\ref{thm: stability2} under $\Hqsep$) implies that \((Y^n,Z^n)\) is Cauchy in the Banach space $\bbS^\infty\times\bbH^2_{\mathrm{BMO}}$. Passing to the limit yields a solution to \eqref{mainBSDE}, and uniqueness follows from the same stability result.

\begin{lemma}\label{Lem: Malliavin}
Assume $\Hzero$ and ${\Hqunif}$ hold, and suppose $\xi$ is Malliavin differentiable with bounded derivative
$\norm{D\xi}_{\bbS^\infty}<\infty$.
Then the mean-field BSDE admits a solution in $\bbS^\infty\times\bbS^\infty$.
\end{lemma}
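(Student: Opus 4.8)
The plan is to reduce the separately quadratic equation to a Lipschitz one by truncation, solve the resulting Lipschitz mean-field BSDE via the classical theory of \cite{Buckdahn2009MFBSDE}, and then invoke Proposition~\ref{prop: lipbound} to control the solution in $\bbS^\infty\times\bbS^\infty$ with bounds that are \emph{independent of the Lipschitz constant of the truncated generator}. The key observation is that the bound on the $Z$-component furnished by Proposition~\ref{prop: lipbound}(2) equals $\norm{D\xi}_{\bbS^\infty}$, and is therefore known \emph{before} the equation is solved; this lets us fix the truncation level above $\norm{D\xi}_{\bbS^\infty}$ once and for all, so that the truncated solution will turn out to solve the original BSDE with no passage to the limit.

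Concretely, I would set $R:=\norm{D\xi}_{\bbS^\infty}<\infty$, let $\rho\colon\bbR^d\to\bbR^d$ be the $1$-Lipschitz radial projection onto the closed ball of radius $R$ (so $\rho(z)=z$ on $\{\abs z\le R\}$, $\abs{\rho(\cdot)}\le R$, and $\rho(0)=0$), and put $\hat g(s,z,\tilde z):=g\bigl(s,\rho(z),\rho(\tilde z)\bigr)$. One checks at once that $\hat g$ still satisfies $(\rm H_0)$, since $\hat g(s,0,0)=g(s,0,0)$, and that $(\rm H_q)$ together with the boundedness and $1$-Lipschitzianity of $\rho$ upgrades the growth estimate to a global Lipschitz bound $(\rm H_{lip})$ with constant $L_z=(K_z+K_{\tilde z})(1+2R)$. (If a $C^1$ generator is wanted one may in addition mollify $g$ in $(z,\tilde z)$; since all bounds below are uniform, this is harmless, and in any case the mollification step is internal to the proof of Proposition~\ref{prop: lipbound}.) Theorem~3.1 of \cite{Buckdahn2009MFBSDE} then gives a unique $(\hat Y,\hat Z)\in\bbS^2\times\bbH^2$ solving the mean-field BSDE with generator $\hat g$ and terminal $\xi$, and Proposition~\ref{prop: lipbound}, applicable because $\hat g$ satisfies $(\rm H_0)$–$(\rm H_{lip})$ and $\xi$ has bounded Malliavin derivative, yields
\[
\norm{\hat Y}_{\bbS^\infty}\le\norm{\xi}_{L^\infty}\le K_0,\qquad \norm{\hat Z}_{\bbS^\infty}\le\norm{D\xi}_{\bbS^\infty}=R .
\]

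It then remains to undo the truncation. Taking the $\bbS^\infty$-version of $\hat Z$ we have $\abs{\hat Z_s(\omega)}\le R$ for every $(s,\omega)$, hence $\rho(\hat Z_s)=\hat Z_s$; and since the independent copy $\tilde{\hat Z}$ has the same law it also takes values in the ball of radius $R$, so $\rho(\tilde{\hat Z}_s)=\tilde{\hat Z}_s$ $\tilde\bbP$-a.s. Consequently $\widetilde{\E}\,\hat g(s,\hat Z_s,\tilde{\hat Z}_s)=\widetilde{\E}\,g(s,\hat Z_s,\tilde{\hat Z}_s)$ for a.e.\ $s$, so $(\hat Y,\hat Z)$ actually solves the original BSDE \eqref{mainBSDE}, and by the two displayed bounds it lies in $\bbS^\infty\times\bbS^\infty$.

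The step I expect to require the most care is not any single estimate but the logical point underlying the removal of the truncation: one must be certain that the a priori $\bbS^\infty$-bound on $Z$ depends only on $\norm{D\xi}_{\bbS^\infty}$ and not on the (large) Lipschitz constant $L_z$ of $\hat g$ — this is exactly the content of Proposition~\ref{prop: lipbound}(2) — and that the same ball constraint is inherited by the independent copy $\tilde Z$, which is immediate since it is a copy in law. With these in hand the argument closes without any passage to the limit, which is precisely what sidesteps the difficulty of having to pass to a limit inside the nonlinear, mean-field term.
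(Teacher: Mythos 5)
Your proposal is correct and follows essentially the same route as the paper: truncate the generator radially to obtain a Lipschitz mean-field BSDE, solve it via \cite{Buckdahn2009MFBSDE}, and use the fact that the $\bbS^\infty$-bound on $Z$ from Proposition~\ref{prop: lipbound}(2) equals $\norm{D\xi}_{\bbS^\infty}$ (independent of the Lipschitz constant) to conclude that the truncation is inactive. The only cosmetic difference is that you fix the truncation radius $R=\norm{D\xi}_{\bbS^\infty}$ once and for all, whereas the paper runs through a sequence of truncation levels $k$ and then selects $k^*>\sup_k\norm{Z^k}_{\bbS^\infty}$; both close the argument without any passage to the limit.
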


\begin{proof}
For $k\in\bbN$, define the truncation map $\pi_k:\bbR^d\to\bbR^d$ by
\[
\pi_k(z):=\frac{|z|\wedge k}{|z|}\,z\,\mathbf{1}_{\{z\neq 0\}},\qquad \pi_k(0):=0,
\]
and define a sequence of drivers $(\bar g_k)_{k\in\bbN}$ by
\[
\bar g_k(t,z,\tilde z):=g\big(t,\pi_k(z),\pi_k(\tilde z)\big),\qquad (t,z,\tilde z)\in[0,T]\times\bbR^d\times\bbR^d.
\]
Note that $|\pi_k(z)|\le k$ for all $z$, and $\pi_k$ is $1$--Lipschitz:
\begin{equation}\label{eq:pi-Lip}
|\pi_k(z)-\pi_k(z')|\le |z-z'|,\qquad \forall z,z'\in\bbR^d.
\end{equation}
We claim that $\bar g_k$ is globally Lipschitz, we have for all $(t,z,\tilde z,z',\tilde z')$,
\begin{align*}
|\bar g_k(t,z,\tilde z)-\bar g_k(t,z',\tilde z')|
&\le K_z\Big(1+|\pi_k(z)|+|\pi_k(z')|+|\pi_k(\tilde z)|+|\pi_k(\tilde z')|\Big)\,|\pi_k(z)-\pi_k(z')| \\
&\quad + K_{\tilde z}\Big(1+|\pi_k(z)|+|\pi_k(z')|+|\pi_k(\tilde z)|+|\pi_k(\tilde z')|\Big)\,|\pi_k(\tilde z)-\pi_k(\tilde z')|.
\end{align*}
Since $|\pi_k(\cdot)|\le k$, the common factor is bounded by $1+4k$, and by \eqref{eq:pi-Lip},
\begin{equation}\label{eq:gk-Lip}
|\bar g_k(t,z,\tilde z)-\bar g_k(t,z',\tilde z')|
\le (1+4k)\,K_z\,|z-z'| + (1+4k)\,K_{\tilde z}\,|\tilde z-\tilde z'|.
\end{equation}
Thus, $\bar g_k$ satisfies $\Hzero$ and $\Hlip$. Consequently, the mean-field BSDE with driver $\bar g_k$,
\begin{equation}\label{eq: lipMFBSDE}
    Y_t^k = \xi + \int_t^T \int_{\tilde{\Omega}} \bar{g}_k(s, Z_s^k, \tilde Z_s^k) \, d\tilde\bbP \, ds - \int_t^T Z_s^k \cdot dW_s,
    \quad 0 \leq t \leq T,
\end{equation}
admits a unique solution $(Y^k,Z^k)\in\bbS^2\times\bbH^2$ (see Theorem~3.1 in \cite{Buckdahn2009MFBSDE}).
Moreover, Proposition~\ref{prop: lipbound} ensures that for each $k\in\bbN$, $(Y^k,Z^k)\in\bbS^\infty\times\bbS^\infty$.
In particular, there exists a constant $C_Z>0$ such that
\[
\|Z^k\|_{\bbS^\infty}\le C_Z,\qquad \forall k\in\bbN.
\]
Since $(\tilde Y^k,\tilde Z^k)$ are independent copies of $(Y^k,Z^k)$, the same bound holds for $\tilde Z^k$.
Choose $k^*\in\bbN$ such that $k^*>C_Z$. Then for this $k^*$ the truncation is inactive along the solution and therefore
\[
\bar g_{k^*}(s,Z_s^{k^*},\tilde Z_s^{k^*})
= g(s,Z_s^{k^*},\tilde Z_s^{k^*}),\qquad 0\le s\le T.
\]
Hence, $(Y^{k^*},Z^{k^*})$ solves the original mean-field BSDE~\eqref{mainBSDE}, which completes the proof.
\end{proof}
For the following results, let us define the time partition $0=t_0\leq t_1\leq t_2\leq\cdots\leq t_n=T$.
\begin{lemma}\label{lem:holdercontxi}
	Consider a bounded function $\phi:(\bbR^d)^n\to\bbR$ which is $\alpha$-H\"older continuous, that is, there exists $K>0$ and $\alpha\in(0,1]$ such that 
	\[
|\phi(x)-\phi(x')|\leq K\max_{i=1,2,\cdots, n}|x_i-x'_i|^\alpha,
	\]
	for any $x,x'\in (\bbR^d)^n$.
	Then, for $H$ satisfying
	\[
	\phi(W_{t_1},W_{t_2},\cdots, W_{t_n})=\bbE \phi(W_{t_1},W_{t_2},\cdots, W_{t_n})+\int_0^TH_sdW_s,
	\]
	there exists a constant $C$ such that $\int_0^T|H_s|^2ds\leq C$ a.s.
\end{lemma}
\begin{proof}
Let us first prove for the case when $\phi(\omega)=\psi(\omega_T)$ where $\psi$ is $\alpha$-H\"older continuous function from $\bbR^d$ to $\bbR$. Then, there exists a function $u(t,x)$ such that $u(T,x)=\psi(x)$ and $\partial_tu+\half \partial^2_{xx}u =0$. Then, we have
\[
\psi(W_T)=u(0,0)+\int_0^T\partial_xu(t,W_t)dW_t\text{ and }u(t,x)=\bbE\edg{\psi(W_T)|W_t=x}.
\]
Since 
$$
u(t,x)=\bbE\psi(x+W_T-W_t)=\int \psi(x+y)f_{T-t}(y)dy=\int \psi(u)f_{T-t}(u-x)du,
$$ 
when $f_{T-t}$ is the PDF of $N(0,T-t)$, for $Z\sim N(0,T-t)$, we have
\begin{align*}
	\partial_xu(t,x)&=\int \psi(u)\frac{u-x}{T-t}f_{T-t}(u-x)du=\int \psi(x+y)\frac{y}{T-t}f_{T-t}(y)dy\\
	&=\bbE\edg{\psi(x+Z)\frac{Z}{T-t}} =\bbE\edg{\brak{\psi(x+Z)-\psi(x)}\frac{Z}{T-t}}\\
	&\leq \frac{K}{T-t}\bbE\edg{|Z|^{\alpha+1}}=KC_\alpha (T-t)^{\frac{\alpha+1}{2}-1},
\end{align*}
where $C_\alpha$ is the $\alpha$th moment of standard normal random variable multiplied by $C$.
Therefore,
\[
\int_0^T|H_s|^2ds=\int_0^T|\partial_xu(t,W_t)|^2dt\leq K^2C_\alpha^2 \int_0^T(T-t)^{\alpha-1}dt\leq K^2C_\alpha^2T^\alpha.
\]
For general case,  we can iterate the Step 1 backwards in time to obtain $L^\infty$ bound of $\int_0^T|H_s|^2ds$.
\end{proof}
We are now in a position to state the main result of this section. For the reader’s convenience, we first restate the main theorem below in a simplified setting and then generalize the statement. To complete the proof of the following theorem, it remains to extend the existence result from Malliavin differentiable terminal conditions to the general bounded case. Uniqueness then follows directly from the stability result established in Theorem~\ref{thm: stability}.

\begin{theorem}\label{thm: mainthm2}
Let Assumptions ${\Hzero}$ and ${\Hqunif}$ hold. In addition, assume that
$$\xi=\phi(W_{t_1},W_{t_2},\cdots, W_{t_n})$$ for a bounded function $\phi:(\bbR^d)^n\to\bbR$ which is $\alpha$-H\"older continuous, that is, there exists $K>0$ and $\alpha\in(0,1]$ such that 
\[
|\phi(x)-\phi(x')|\leq K\max_{i=1,2,\cdots, n}|x_i-x'_i|^\alpha,
\]
for any $x,x'\in (\bbR^d)^n$. 
Then, there exists a unique solution \((Y,Z) \in \bbS^\infty \times \bbH^2_{\mathrm{BMO}}\) to the BSDE \eqref{mainBSDE}.
\end{theorem}
\begin{proof}
Let $\eta_m$ be a mollifier on $(\bbR^d)^n$ with compact support $([-\frac{1}{m},\frac{1}{m}]^d)^n$. Consider $\phi^m(x):=\eta_m*\phi$, which is Lipschitz. Then $\xi^m:=\phi^m(W_{t_1},W_{t_2},\cdots, W_{t_n})$ has bounded Malliavin derivatives, and Lemma~\ref{Lem: Malliavin} yields a corresponding solution $(Y^m,Z^m)$. Let $H^m$ be the martingale representation integrand of $\xi^m$, that is,
	\begin{align*}
		\xi^m=\bbE\xi^m+\int_0^T H^m_sdW_s,
	\end{align*}
and show that $\sup_m\int_0^T |H^m_s|^2ds\leq C$ almost surely for some $C$. Since $\phi^m$ has the same H\"older constant as $\phi$ for any $m$, we can apply Lemma~\ref{lem:holdercontxi} to obtain such a constant $C$. Finally, Theorem~\ref{thm: stability} provides the convergence of $(Y^m, Z^m)$ to $(Y,Z)$ in $\bbS^\infty\times\bbH^2_{\mathrm{BMO}}$. Uniqueness follows from Theorem~\ref{thm: stability} as well.
\end{proof}
\begin{theorem}\label{thm: mainthm3}
	Let Assumptions ${\Hzero}$ and ${\Hqsep}$ hold. In addition, assume that there exists a function $\phi:C([0,T];\bbR^d)\to\bbR$ such that $\xi=\phi(W_{[0,T]})$ and uniformly continuous in sup norm.
	Then, there exists a unique solution \((Y,Z) \in \bbS^\infty \times \bbH^2_{\mathrm{BMO}}\) to the BSDE \eqref{mainBSDE}.
\end{theorem}
\begin{proof}
Let us consider the Moreau-Yosida approximation of $\phi$:
\begin{align*}
	\phi^n(\omega):=\inf_{y\in C([0,T];\bbR^d)}\crl{\phi(y)+n\sup_{t\in[0,T]}\abs{y(t)-\omega(t)}}.
\end{align*}
Since
$\inf_{y\in C([0,T];\bbR^d)}\phi(y)\leq \phi^n(\omega)\leq \phi(\omega),$
$\phi^n$ is bounded. In addition, note that, for any $\omega,\omega'$
{\small\begin{align*}
	\phi^n(\omega)&=\inf\crl{\phi(y)+n\sup_{t\in[0,T]}\abs{y(t)-\omega(t)}}\leq \inf\crl{\phi(y)+n\sup_{t\in[0,T]}\abs{y(t)-\omega'(t)}}+n\sup_{t\in[0,T]}\abs{\omega'(t)-\omega(t)}\\
	&\leq \phi^n(\omega')+n\sup_{t\in[0,T]}\abs{\omega'(t)-\omega(t)}.
\end{align*}}
Therefore, $\phi^n$ is Lipschitz in sup norm. Lastly, since $\phi$ is uniformly continuous, for $\veps>0$, there exists $N$ such that $\sup_{t\in[0,T]}\abs{y(t)-\omega(t)}<\frac{\veps}{2n}$ implies $|\phi(y)-\phi(\omega)|\leq \veps/2$ for all $n\geq N$. Then,
\begin{align*}
	|\phi^n(\omega)-\phi(\omega)|&=\abs{\inf_{y\in C([0,T];\bbR^d)}\crl{\phi(y)-\phi(\omega)+n\sup_{t\in[0,T]}\abs{y(t)-\omega(t)}}}<\veps.
\end{align*}
Therefore,  $\phi^n(\omega)$ converges to $\phi(\omega)$ uniformly in $\omega$.By Proposition 3.2 of \cite{cheridito2014bsdes}, $\xi^n:=\phi^n(W)$ has bounded Malliavin derivatives. By Lemma \ref{Lem: Malliavin}, each approximating BSDE with terminal condition \(\xi^n\) admits a solution \((Y^n, Z^n) \in \bbS^\infty \times \bbS^\infty\), satisfying
\begin{equation}\label{eq: limitBSDE}
	Y_t^n = \xi^n + \int_t^T \int_{\tilde{\Omega}} g(s, Z_s^n, \tilde{Z}_s^n) \, d\tilde{\bbP} \, ds - \int_t^T Z_s^n \cdot dW_s.
\end{equation}
Moreover, since $\phi^n(\omega)$ converges to $\phi(\omega)$ uniformly in $\omega$, we have $\norm{\xi^n-\xi}_{L^\infty}\to0$. Then, by Theorem \ref{thm: stability2}, we obtain the convergence of $(Y^n, Z^n)$ to $(Y,Z)$ in $\bbS^\infty\times\bbH^2_{BMO}$. The uniqueness follows from Theorem \ref{thm: stability2} again.
\end{proof}
Now let us generalize above theorem to the version we stated in Section \ref{Sec: Mainresult}.
\begin{theorem}
Assume that there is a constant $C$ such that $b:[0,T]\times\bbR^k\to\bbR^k$ and $\sigma:[0,T]\to \bbR^{k\times n}$ are measurable functions with 
\begin{itemize}
\item 	$|b(t,x)-b(t,x')|\leq C|x-x'|, |b(t,0)|\leq C$ for every $(t,x)\in[0,T]\times\bbR^k$, and
\item $C^{-1}|v|^2\leq v^\intercal(\sigma\sigma^\intercal)(t)v\leq C|v|^2$ for all $v\in\bbR^k$ and $t\in[0,T]$.
\end{itemize} Then, let $X$ be the strong solution of 
\[
dX_t=b(t, X_t)dt+\sigma(t)dW_t;\qquad X_0=x_0.
\]
Then, we can replace $\phi(W_{[0,T]})$ to $\phi(X_{[0,T]})$ Theorem \ref{thm: mainthm3}, and the statement still holds true. In addition, if $b(t,x)$ is uniformly bounded, we can replace $\phi((W_{t_i})_{i=1,2,\cdots, n}))$ to $\phi((X_{t_i})_{i=1,2,\cdots, n}))$ in Theorem \ref{thm: mainthm2} as well.
\end{theorem}
\begin{proof}
First, let us consider Theorem \ref{thm: mainthm2}. Without loss of generality, let us assume $k=1$: otherwise, we can use a dynamic change of coordinate to diagonalize $\sigma\sigma^\intercal(t)$ and treat each coordinate separately.	First of all, note that $D_t X_s$ is bounded due to Lemma 4.2 of \cite{cheridito2014bsdes}. Therefore, in the proof of Theorem \ref{thm: mainthm2},
\[
\xi^m:=\phi^m(X_{t_1},X_{t_2},\cdots,X_{t_n})
\]
has uniformly bounded Malliavin derivatives, which gives the corresponding solution $(Y^m, Z^m)$. If we show that $\int_0^T |H_t^m|^2ds<\infty$ for $H^m$ defined by the martingale representation of $\xi^m$, then Theorem \ref{thm: stability} gives the existence and uniqueness of solution. Note that $H^m$ does not change under equivalent measure change. Let us define $\bbQ$ by
\[
\left.\frac{d\bbQ}{d\bbP}\right|_t=\scE_t\brak{\int_0^\cdot (\sigma^{-1}b)(t,X_t)^\intercal dW_t}.
\]
Then we can apply Girsanov's theorem to conclude that $\bbQ$ is equivalent to $\bbP$ and that \[
B_t=W_t+\int_0^t\brak{\sigma^{-1}b}(s,X_s)ds
\] is $\bbQ$-Brownian motion. We have $dX_t=\sigma(t)dB_t$ and, under time change, $X_t=\tilde B_{\int_0^t|\sigma(s)|^2ds}$ for some Brownian motion $\tilde B$. Since $|\sigma|^2>C^{-1}>0$, there exist deterministic times $s_i$ such that $X_{t_i}=\tilde B_{s_i}$. Since
\[
\xi^m=\phi^m(X_{t_1},X_{t_2},\cdots,X_{t_n})=\phi^m(\tilde B_{s_1},\tilde B_{s_2},\cdots,\tilde B_{s_n})
\]
with $\int_0^{s_n}|\sigma(s)|^2ds=T$. This implies $s_n<\infty$ because $|\sigma|^2$ is bounded from below by $C^{-1}$. Using the Lemma \ref{lem:holdercontxi}, we obtain the uniform bound on $H^m$.

On the other hand, let us consider Theorem \ref{thm: mainthm3} where $k$ can be an arbitrary natural number. Note that we only need to prove that, for Lipschitz $\phi:C([0,T];\bbR)\to\bbR$, the Malliavin derivative of $\phi(X_{[0,T]})$ is bounded: if this statement is true, $\phi^n$ in the proof of Theorem \ref{thm: mainthm3} has bounded Malliavin derivatives, so we can use the same argument to prove the existence and uniqueness.

Note that, if $\phi$ is Lipschitz, for linear interpolator $l^m:\bbR^m\to C([0,T];\bbR)$ and $t_i=\frac{iT}{m}$, we have
\begin{align*}
    \bbE\edg{\abs{\phi(X_{[0,T]})-(\phi\circ l^m)(X_{t_1},X_{t_2},\cdots,X_{t_m})}^p}&\leq C^2\bbE\edg{\max_{i=1,2,\cdots,m}\sup_{t\in(t_i,t_{i+1}]}|X_t-X_{t_i}|^p},
\end{align*}
for $p>2$.
Note that we have
\begin{align*}
    \sup_{t\in(t_i,t_{i+1}]}|X_t-X_{t_i}|^p&\leq\sup_{t\in(t_i,t_{i+1}]}\abs{\int_{t_i}^tb(s,X_s)ds+\int_{t_i}^t\sigma(s)dW_s}^p\\
    &\leq2^{p-1}\sup_{t\in(t_i,t_{i+1}]}\brak{\abs{\int_{t_i}^tb(s,X_s)ds}^p+\abs{\int_{t_i}^t\sigma(s)dW_s}^p}\\
    &\leq 2^{p-1}\brak{(t_{i+1}-t_i)^{p-1}\int_{t_i}^{t_{i+1}} |b(s,X_s)|^pds+\sup_{t\in(t_i,t_{i+1}]}\abs{\int_{t_i}^t\sigma(s)dW_s}^p}\\
    &\leq 2^{p-1}\brak{(2T)^{p-1}m^{-(p-1)}C^p\int_{0}^{T} (1+|X_s|^2)ds+\sup_{t\in(t_i,t_{i+1}]}\abs{\int_{t_i}^t\sigma(s)dW_s}^p}.
\end{align*}
It is easy to see that, as $m\to\infty$,
\[
\bbE\max_{i=1,2,\cdots,m}m^{-(p-1)}\int_{0}^{T} (1+|X_s|^2)ds\leq \bbE m^{-(p-2)}\int_{0}^{T} (1+|X_s|^2)ds\to 0
\]
because $X\in\bbH^2$ from our assumption. On the other hand
\begin{align*}
    \bbE\max_{i=1,2,\cdots,m}\sup_{t\in(t_i,t_{i+1}]}\abs{\int_{t_i}^t\sigma(s)dW_s}^p&\leq \sum_{i=1}^m\bbE\sup_{t\in(t_i,t_{i+1}]}\abs{\int_{t_i}^t\sigma(s)dW_s}^p
    \leq C_p\sum_{i=1}^m\bbE\abs{\int_{t_i}^{t_{i+1}}|\sigma(s)|^2ds}^{p/2}\\
    &\leq C_p \sum_{i=1}^m (t_{i+1}-t_i)^{p/2-1}\int_{t_i}^{t_{i+1}}|\sigma(s)|^pds\\
    &\leq C_p\brak{\frac{T}{m}}^{p/2-1}\int_0^T|\sigma(s)|^pds\to0,
\end{align*}
as $m\to\infty$. Here, $C_p$ is the constant from the Burkholder-Davis-Gundy inequality.
As a result, $(\phi\circ l^m)(X_{t_1},X_{t_2},\cdots,X_{t_m})\to\phi(X_{[0,T]})$ in $L^p$.
In addition, by \cite[Proposition~1.2.4]{nualart2006malliavin} and the boundedness of $D_tX_s$, we know that $(\phi\circ l^m)(X_{t_1},X_{t_2},\cdots,X_{t_m})$ has bounded Malliavin derivative.
Therefore, by \cite[Lemma~1.2.3]{nualart2006malliavin}, we conclude that $\phi(X_{[0,T]})$ has bounded Malliavin derivative whenever $\phi$ is Lipschitz.
\end{proof}
\section{Multidimensional MFBSDE with Small Terminal Condition and Utility Maximization}\label{Sec: application}
In this section, we provide an extension of the foundational work of \cite{hu2005utility} to the mean-field case. Specifically, we explore how a trader's decision-making process evolves in a financial market influenced by collective behavior. Unlike previous studies that assume fixed constraints, we consider a dynamic setting where a trader's strategy is shaped not only by individual preferences but also by the actions of others. This interaction leads to a more flexible and realistic model, in which part of the trading constraints are determined by market conditions rather than imposed arbitrarily.

This section is structured as follows. In Section~\ref{subsec: appmodel}, we introduce the utility maximization problem and the associated mathematical framework. We then extend the constraint set to incorporate dependence on a stochastic process. At this stage, we do not impose any assumptions on this process beyond considering it as a general stochastic process. Next, we establish the connection between the utility maximization problem and the BSDE~\eqref{BSDE: application} with a suitably chosen driver. We then show that the process used to define the constraint set is precisely the solution component $Z$ in BSDE~\eqref{BSDE: application}. With the complete framework in place, we offer a financial interpretation of the components, which is discussed in Remarks~\ref{Rem: strategy_without_const}.

To relate this application to our theoretical results, we move beyond the structural assumptions on the terminal condition used in Sections~\ref{Sec: Stability}--\ref{Sec: ExistenceUniqueness}. The driver in \eqref{eq: driverf} involves the distance to a constraint set that depends on the aggregated signal $\bbE[Z]$, and its quadratic growth is generally \emph{fully coupled} in $(Z,\bbE[Z])$. Consequently, we fit \eqref{BSDE: application} into the unified frameworks used earlier via a different representation. We introduce the fully coupled BSDE \eqref{BSDE_general} and prove a dedicated well-posedness result under a \emph{small terminal oscillation} assumption.

More precisely, we establish local existence and uniqueness for the centered BSDE \eqref{BSDE_general_auxiliary} on short time intervals in Lemma~\ref{lem: localauxBSDE} via a contraction argument, and then obtain a global solution on $[0,T]$ by a concatenation procedure in Proposition~\ref{prop: auxBSDE}. The smallness requirement is imposed on the centered terminal condition $\eta:=\xi-\bbE[\xi]$, which is natural in the utility maximization setting where the terminal liability need not be small but may be close to its mean.

With this well-posedness result in hand, we return to the utility maximization problem. We first derive an elementary growth estimate for the (volatility-scaled) constraint set in Lemma~\ref{Lem: conset estimate}. This estimate is used in Theorem~\ref{thm: app} to verify that the driver $f$ in \eqref{eq: driverf} satisfies $\Hzero$ and falls within the fully coupled quadratic class covered by Proposition~\ref{prop: auxBSDE}. Finally, we conclude that the BSDE \eqref{BSDE: application} admits a unique solution, and we identify the value function and an optimal strategy by adapting the verification argument in Theorem~7 of \cite{hu2005utility} to the present mean-field, market-adaptive constraint setting.

\subsection{Financial market model setup}\label{subsec: appmodel}
Consider a financial market composed of \(d\) risky stocks. In this market, there are countably infinite number of homogeneous investors who trade dynamically between the risk-free bond and the risky assets. An investor, a representative of homogeneous investors, uses the following model to maximize the utility. The price process of the \(i\)-th stock, denoted by \(S^i\), evolves according to the following stochastic differential equation:
\[
    \frac{\mathrm{d} S_t^i}{S_t^i} = b_t^i \, \mathrm{d} t + \sigma_t^i \, \mathrm{d} W_t, \quad i = 1, \dots, d,
\]
where $W$ is an $n$-dimensional Brownian motion, \(b^i\) is a real-valued predictable, bounded stochastic process representing the drift, and \(\sigma^i\) is an \(\mathbb{R}^n\)-valued predictable, bounded stochastic process representing the volatility. The volatility matrix \(\sigma_t = (\sigma_t^1, \dots, \sigma_t^d)^\top\) is assumed to have full rank \(d\), and we assume that $\sigma_t \sigma_t^\top$ is uniformly elliptic. Thus, the market price of risk \(\theta_t\), also referred to as the risk premium, is defined by
\[
    \theta_t := \sigma_t^\top (\sigma_t \sigma_t^\top)^{-1} b_t, \quad t \in [0, T],
\]
where \(\theta_t\) is an \(\mathbb{R}^n\)-valued uniformly bounded predictable process.

For \( 1 \leq i \leq d \), let \( \pi^i_t \) represent the amount of money invested in stock \( i \) at time \( t \), meaning the number of shares held in stock \( i \) is \( \frac{\pi^i_t}{S^i_t} \). A predictable process \( \pi = (\pi_t^1, \dots, \pi_t^d)_{0 \leq t \leq T} \in \mathbb{R}^d \), or simply \( (\pi_t)_{0 \leq t \leq T} \), is called a trading strategy if \( \int_0^\cdot \pi_u \frac{\diff S_u}{S_u} \) is well-defined, which holds if \( \int_0^T \|\pi_t \sigma_t\|^2 \diff t < \infty \) almost surely. The corresponding wealth process \( X^\pi \), representing the investor's wealth over time until $T$, is given by:
\[
X^\pi_t = x + \sum_{i=1}^d \int_0^t \frac{\pi^i_u}{S^i_u} \diff S^i_u = x + \int_0^t \pi_u \sigma_u (\diff W_u + \theta_u \diff u),
\]
where \( x \in \mathbb{R} \) is the initial capital.

 At maturity \( T \), the investor's total wealth is determined by both their terminal wealth \( X^\pi_T \) and an \( \mathcal{F}_T \)-measurable random liability \( F \), where \( F \geq 0 \) represents a payment, and \( F \leq 0 \) represents income. 
The total wealth at maturity is thus \( X^\pi_T - F \). The investor's preferences are modeled by an exponential utility function applied to their terminal wealth \( X^\pi_T - F \). 
The exponential utility function is given by:
\[
U(x) = -\exp(-\alpha x), \quad x \in \mathbb{R}.
\]

For investors other than the representative, we assume that the risk aversion parameter $\alpha>0$ is identical. The model for the risky assets is of the same type, but with $b$, $\sigma$ and $W$ being independent and identically distributed relative to the representative's model. In addition, we assume that the terminal liabilities are independent and have distribution $F$.

\subsection{Constraint set and aggregated market signal}
To ensure that no arbitrage opportunities exist, the definition of admissible trading strategies imposes constraints on the set of allowable positions. Below, we outline the constraint set in Definition~\ref{def: mf_consts}, which accounts for market-wide influences. To incorporate the influence of market-wide behavior, for \( t \in [0, T] \) and \( \omega \in \Omega \), we define two stochastic processes satisfying the necessary integrability conditions such that
    \[
    Z_t(\omega) := \hat{Z_t}(\omega) \sigma_t(\omega),
    \] where \( Z_t(\omega) \in \mathbb{R}^{1 \times n}\) and \( \hat{Z_t}(\omega) \in \mathbb{R}^{1 \times d} \). These two processes are essentially equivalent because the matrix \( \sigma \) has full rank. Later, in Remark~\ref{Rem: strategy_without_const}, we will see that \( \hat Z_t\sigma_t +\frac{\theta_t}{\alpha}  = Z_t +\frac{\theta_t}{\alpha}\) yields the optimal strategy at time $t\in [0,T]$ if no constraints are imposed. Thus, $Z$ can be interpreted as the trading strategy adjusted by $\theta/\alpha$. Later, we will show that this $Z$ corresponds precisely to the martingale density process of BSDE~\eqref{BSDE: application}.

We now present expressions for the constraint set under different settings. Specifically, we denote the constraint without market influence by \( \hat{D} \subseteq \mathbb{R}^d \), where \( \hat{D} \) is a closed set. When market influence is present, the strategy \( \pi_t(\omega) \) is constrained to a new closed set.

The expectation of $\hat Z$ is referred to as the \textit{aggregated market signal}, representing the expected collective adjusted trading strategy of all market participants. This signal plays a crucial role in defining the feasible trading region (constraint set), as described below.

\begin{definition}[Mean-field adaptive constraint set]\label{def: mf_consts}
Let \( \hat{D} \) be a closed set in \( \mathbb{R}^d \), and define the mean-field adaptive constraint set as:
\begin{equation}
    \consetwt := \hat{D} \odot L(\mathbb{E}[\hat{Z}_t]), \quad t \in [0,T],
\end{equation}
where \( \odot \) denotes the Hadamard product (component-wise multiplication) and \( L(\mathbb{E}[\hat{Z}_t]) \) is a linear mapping depending on the value of \( \mathbb{E}[\hat{Z}_t] \) for fixed \( t \in [0,T] \), representing how the trader adjusts decisions based on the market's behavior, defined by:
\[
L(\mathbb{E}[\hat{Z}_t]) := (\gamma \odot \mathbb{E}[\hat{Z}_t]) + \eta,
\]
where \( \gamma := (\gamma_1, \dots, \gamma_d) \) is the market sensitivity vector and \( \eta \in \mathbb{R}^{1 \times d} \) is the market shift vector, with both being constant vectors.
\end{definition}

With the help of the previous definitions, for a fixed \( t \in [0,T] \), we start with a set \( \hat{D} \subset \mathbb{R}^d \), which represents certain constraints such as risk limits or position constraints. After incorporating information from the market, we apply scaling and shifting to this set so that it reflects both the original constraints and the market's influence. This results in a new set that is more flexible and realistic, capturing the dynamic behavior of the market. We denote this updated set by \( \consetwt \), emphasizing its dependence on the aggregated market signal $\bbE[\hat{Z}_t]$ at time $t \in [0, T]$.

\begin{definition}[Admissible strategies with constraints \( \hat{\scA} \)]
Let $\consetwt$ be a closed set in \( \mathbb{R}^d \) for fixed $t\in[0, T]$. The set of admissible trading strategies \( \hat{\scA} \) consists of all \( \mathbb{R}^d \)-valued predictable processes \( \pi \) satisfying \( \pi_t \sigma_t \in L^2[0, T] \), \( \pi_t \in \consetwt \) a.s. \( \forall t \in [0, T] \), and
\[
\left\{ \exp(-\alpha X^\pi_\tau) : \tau \text{ is a stopping time taking values in } [0, T] \right\}
\]
forms a uniformly integrable family.
\end{definition}

The investor’s objective is to choose an admissible self-financing trading strategy \( \pi^* \) to maximize the expected utility of the net wealth at maturity \( T \):
\[
V(0, x) := \sup_{\pi\in \hat{\scA}} \mathbb{E} \left[ -\exp\left( -\alpha \left( x + \int_0^T \pi_u \frac{dS_u}{S_u} - F \right) \right) \right],
\]
where \( V(0, \cdot) \) is called the value function at initial time $0$. Solving this optimization problem requires selecting an admissible set from which we select the optimal trading strategy \( \pi^* \). To make this notation more compact, we introduce the following notations. For \( t\in[0,T], \omega\in\Omega \), define the set \( \conset \subseteq \mathbb{R}^n \) and (volatility-scaled) trading strategy \( p_t \) such that
\begin{equation*}
    \conset := \consetwt \,\sigma_t(\omega)  \quad \text{and} \quad p_t := \pi_t \sigma_t.
\end{equation*}
Note that $Z_t = \hat{Z}_t \, \sigma_t$ and $\bbE[Z_t] = \bbE[\hat{Z}_t] \, \sigma_t$.
\begin{lemma}\label{Lem: conset estimate}
    For every \( (\omega, t) \), the set \( \conset \) is closed, and we have the following estimate:
    \[
    \max\{ |c| : c \in \conset \} \leq K_1 \left(1+\mathbb{E}\abs{Z_t}\right) \quad \text{a.s., for all } t \in [0, T],
    \]
    with some constant \( K_1 \geq 0 \).  
\end{lemma}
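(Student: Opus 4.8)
The plan is to unwind the two defining identities $\conset = \consetwt\,\sigma_t(\omega)$ and $\consetwt = \tilde D \odot L(\bbE[\tilde Z_t])$, and then to control both the maximal element and the closedness of $\conset$ by estimating the three building blocks in turn: the affine market signal $L(\bbE[\tilde Z_t])$, the Hadamard product with $\tilde D$, and the right multiplication by the volatility matrix $\sigma_t(\omega)$.

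First I would record that, since $\tilde Z$ is an independent copy of $Z$, one has $\bbE[\tilde Z_t] = \bbE[Z_t]$; we may assume this is well defined, i.e.\ $Z_t\in L^1$, as otherwise the asserted bound is vacuous. Jensen's inequality then gives $|\bbE[\tilde Z_t]| \le \bbE|Z_t|$, so with $\gamma_\infty := \max_{1\le i\le d}|\gamma_i|$,
\[
|L(\bbE[\tilde Z_t])| = |(\gamma \odot \bbE[\tilde Z_t]) + \eta| \le \gamma_\infty\,\bbE|Z_t| + |\eta|.
\]
Next, using that $\tilde D$ is compact — so $|d|\le M$ for all $d\in\tilde D$ and some $M<\infty$ — every $c\in\consetwt$ has the form $c = d\odot L(\bbE[\tilde Z_t])$ with $|d|\le M$, whence componentwise $|c_i| \le |d_i|\,|L_i(\bbE[\tilde Z_t])|$ and therefore $|c| \le M\,|L(\bbE[\tilde Z_t])|$. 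Finally, since $\sigma$ is a bounded predictable process, say $|\sigma_t(\omega)|\le K_\sigma$ for a.e.\ $\omega$ and all $t$, any $c'\in\conset$ is of the form $c'=c\,\sigma_t(\omega)$ with $|c'|\le K_\sigma|c|$. Chaining the three bounds yields
\[
\max\{|c'| : c'\in\conset\}
\;\le\; K_\sigma M\big(\gamma_\infty\,\bbE|Z_t| + |\eta|\big)
\;\le\; K_1\big(1+\bbE|Z_t|\big),
\qquad K_1 := K_\sigma M \max\{\gamma_\infty,\,|\eta|\},
\]
uniformly in $(\omega,t)$. For closedness, $\consetwt$ is the image of the compact set $\tilde D$ under the continuous diagonal map $d\mapsto d\odot L(\bbE[\tilde Z_t])$, hence compact; then $\conset$ is the image of the compact set $\consetwt$ under the continuous linear map $c\mapsto c\,\sigma_t(\omega)$, hence compact, and in particular closed.

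The only genuinely delicate point is the boundedness of $\tilde D$: if $\tilde D$ were merely closed and unbounded, both the finiteness of the maximum and the closedness of $\conset$ could fail, since the diagonal map $d\mapsto d\odot L(\bbE[\tilde Z_t])$ becomes non-invertible whenever some component $\gamma_i\,\bbE[\tilde Z_t^i]+\eta_i$ vanishes, and a non-injective linear image of a closed unbounded set need not be closed. So the step that needs care is invoking (or making explicit) the standing assumption that $\tilde D$ is compact; everything else reduces to the routine chain of norm estimates above.
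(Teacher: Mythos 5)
Your proof is correct and is essentially the detailed unwinding of the paper's own (one-sentence) argument, which simply asserts that the result follows from $\gamma,\eta$ being constant, $\sigma$ being uniformly bounded, and $\tilde D$ being closed. The one substantive point you add is real: closedness of $\tilde D$ alone is not enough, and boundedness (compactness) must be assumed for both the finiteness of the maximum and the closedness of $\conset$ — indeed the paper's own Remark \ref{Rem: strategy_without_const} takes $\tilde D=\bbR^d$, a case in which the stated estimate fails — so your insistence on making the compactness of $\tilde D$ explicit corrects a genuine omission in the paper's hypotheses rather than introducing an extraneous assumption.
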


\begin{proof}
    Since the market sensitivity vector \( \gamma \) and the market shift vector \( \eta \) are both constant, the process \( \sigma \) is uniformly bounded, and \( \hat{D} \) is closed, the result follows.
\end{proof} 

Note that the admissible trading strategy set \( \scA \), which is equivalent to \( \hat{\scA} \) (\( \sigma \) has full rank), consists of all \( \mathbb{R}^n \)-valued predictable processes \( p \) that satisfy \( p \in L^2[0, T] \) and \( p_t(\omega) \in \conset \) almost surely for all \( t \in [0, T] \). Additionally, the family
\[
\left\{ \exp(-\alpha X^p_\tau) : \tau \text{ is a stopping time taking values in } [0, T] \right\}
\]
is uniformly integrable. Finally, the dependency of the set \( \consetwt \) on the process \( (\hat{Z}_t)_{0 \leq t \leq T} \) is encoded by the dependency of the set \( \conset \) on the process \( (Z_t)_{0 \leq t \leq T} \). We keep the notation \( X^p \) for the wealth process corresponding to investing strategy $p$. 

With the help of these notations, we can rewrite the problem as 
\begin{equation*}
\sup_{p\in\scA}\bbE\left[-\exp\left( -\alpha \left( x + \int_0^T p_t (\diff W_t + \theta_t \diff t) - F \right) \right)\right]=\mathbb{E} \left[- \exp(-\alpha (X_T^{p^*} - F)) \right],
\end{equation*}
for the optimal control $p^*\in\scA$.
Let us introduce a family of processes $\crl{V^{(p)}(\cdot,x):p\in\scA}$ as follows: let
\begin{equation*}
    V^{(p)}(t, x) := - \exp(-\alpha (X_t^p - Y_t)), \quad t\in[0,T], \, p\in \scA,
\end{equation*}
where \( (Y,Z) \) is a solution of the MFBSDE
\begin{equation}\label{BSDE: application}
	Y_t = F + \int_t^T f(s,Z_s, \mathbb{E}[Z_s]) \, ds - \int_t^T Z_s  \!\cdot dW_s, \quad t \in [0, T].
\end{equation}
Here, the driver \( f \) will be chosen so that \( V^{(p)}(\cdot, x) \) is a supermartingale for all \( p \in \scA \), and there exists \( p^* \in \scA \) such that \( V^{(p^*)}(\cdot, x) \) is a true martingale. If we can choose such an $f$ (as we show below), then
\begin{equation*}
    \mathbb{E} V^{(p)}(T,x) \leq V^{(p)}(0, x) = V^{(p^*)}(0,x) = \bbE V^{(p^*)}(T,x),
\end{equation*}
for all $p\in\scA$ and the optimal control $p^*\in\scA$.

Earlier, we described the process \( (Z_t)_{0\leq t\leq T} \) simply as a stochastic process. However, in this context, we now define it as the martingale density process \( (Z_t)_{0\leq t\leq T} \) for the BSDE in \eqref{BSDE: application}. Consequently, since the constraint set depends on \(\mathbb{E}[Z_t]\), the driver should also depend on \(\mathbb{E}[Z_t]\); see \eqref{eq: driverf}. To verify this, we now specify the driver \( f \) for the construction of $\crl{V^{(p)}(\cdot,x):p\in\scA}$. By Itô's formula,
\begin{equation*}
    V^{(p)}(t, x) = V^{(p)}(0, x) \tilde{A}^{p} \scE_t\left(\int_0^\cdot \alpha(Z_s - p_s) \cdot \diff W_s \right),
\end{equation*}
where
\begin{equation*}
    \tilde{A}^{p} = -\exp\left(\int_0^t \alpha v(s, p_s, Z_s) \diff s \right)
\end{equation*}
and
\begin{equation*}
    v(t, p_t, Z_t) = f(t,Z_t, \bbE[Z_t]) - p_t \theta_t + \frac{\alpha}{2} \abs{p_t - Z_t}^2.
\end{equation*}
Note that
\[
\scE_t\left(\int_0^\cdot \alpha(Z_s - p_s) \cdot \diff W_s \right)
\]
is a true martingale as $\alpha(Z-p)$ is square integrable.
We require \( \tilde{A}^{p} \) to be non-increasing, which holds if \( v(t, p_t, Z_t) \geq 0 \) for all \( p \in \mathcal{A} \), and there exists a \( p^* \in \mathcal{A} \) such that \( v(t, p^*_t, Z_t) = 0 \). Therefore, if we choose
\begin{equation}\label{eq: driverf}
    f(t,Z_t, \bbE[Z_t]) = -\frac{\alpha}{2} \dis^2\left(Z_t + \frac{\theta_t}{\alpha}, \conset \right) + Z_t \theta_t + \frac{\abs{\theta_t}^2}{2\alpha}, 
\end{equation}
then we have
\begin{align*}
    v(t, p_t, Z_t) &= f(t,Z_t, \bbE[Z_t]) + \frac{\alpha}{2}\abs{p_t}^2 + \frac{\alpha}{2}\abs{Z_t}^2 - \alpha p_t \left(Z_t + \frac{\theta_t}{\alpha} \right) \\
    &= f(t,Z_t, \bbE[Z_t]) + \frac{\alpha}{2} \abs{p_t - \left(Z_t + \frac{\theta_t}{\alpha} \right)}^2 - Z_t \theta_t - \frac{\abs{\theta_t}^2}{2\alpha} \\
    &= \frac{\alpha}{2} \abs{p_t - \left(Z_t + \frac{\theta_t}{\alpha} \right)}^2 -\frac{\alpha}{2} \dis^2\left(Z_t + \frac{\theta_t}{\alpha}, \conset \right) \geq 0.
\end{align*}
Thus, the family of processes $\crl{V^{(p)}(\cdot,x):p\in\scA}$ becomes a collection of supermartingales. Moreover, for $p_t^* \in \proj{Z_t + \frac{\theta_t}{\alpha}}$, we have \( v(\cdot, p^*, Z) = 0 \) for every \( t \in [0, T] \), hence $V^{(p^*)}(\cdot,x)$ becomes a true martingale.

We have now completed the construction, and it is clear how the utility maximization problem in our setting connects to a class of mean-field quadratic BSDEs (cf.\ \cite{hibon2017MFquadratic}). In \cite{hu2005utility}, the optimization problem is solved under the assumptions that the random endowment \( F \) is bounded and that the trading strategy \( p \) takes values in an admissible set \( \scA \). However, their admissible set differs from ours: in their case, the constraint set for trading strategies is simply closed in \( \mathbb{R}^d \). In contrast, we introduce a constraint set in which the trader's behavior depends on the market (i.e., the behavior of other traders) in a mean-field sense. In particular, the driver depends on the expectation of the control process \( Z \), and the constraint set becomes time-dependent, reflecting the market's influence at each moment. To connect the control process \( Z \) in BSDE~\eqref{BSDE: application} with the trading strategy \( p \in \scA \), we provide the following remarks bridging the mathematical framework with financial interpretations.

\begin{Remark}\label{Rem: strategy_without_const}
    If we take \( \hat{D} \) to be the entire space, i.e., \( \hat{D} = \mathbb{R}^d \), then the resulting constraint set \( \consetwt \) is again \( \mathbb{R}^d \) as well. Consequently, we obtain \( \conset = \mathbb{R}^n \), implying the absence of constraints. In this case, the optimal trading strategy is given by  
    \[
    p_t^* = Z_t + \frac{\theta_t}{\alpha}, \quad \text{a.s., for all } t \in [0, T].
    \]  
    Thus, the process \( Z_t = p_t^* - \frac{\theta_t}{\alpha} \) can be interpreted as the unconstrained trading strategy adjusted by \( \frac{\theta_t}{\alpha} \) at time \( t \in [0,T] \). In general, the constraint set $\conset$ depends on the average of the adjusted trading strategies among the market participants. The convergence is guaranteed by the homogeneity and exchangeability of market participants.
\end{Remark}


\subsection{Existence and uniqueness for BSDEs with aggregated market signal}
We are now ready to present the final result. 
We first adapt ideas from \cite{tevzadze2008solvability}: under a smallness condition on the centred terminal value and a (local) Lipschitz component, the fully coupled mean-field BSDE \eqref{BSDE_general} admits a unique solution. We then globalize this local result by a localization/concatenation procedure in Lemma \ref{lem: localauxBSDE} and Proposition \ref{prop: auxBSDE}. Finally, we return to the utility maximization problem and identify both the value function and an optimal strategy in Theorem \ref{thm: app}.

For the purpose of the final result, we introduce the BSDE \eqref{BSDE_general} with the same formulation and the same $\Hzero$ condition in the main result but with a different representation and assume the following BSDE with driver \( g \) and terminal condition \( \xi \), satisfying the conditions specified below:
\begin{equation} \label{BSDE_general}
	Y_t=\xi+\int_t^T \int_{\tilde{\Omega}}g(s,Z_s,\tilde Z_s) \diff \tilde{\prob} \, ds -\int_t^T Z_s \cdot \diff W_s, \quad t \in [0, T].
\end{equation}
\begin{itemize}
    \item[${\HqLK}$] There exist constants $L_z > 0$, and $K_q > 0$ such that:
    \begin{align*}
        \abs{g(t, z, \tilde{z}) - g(t, z', \tilde{z}')} \leq 
        & L_z (\abs{z-z'} + \abs{\tilde{z}-\tilde{z}'}) + K_q (\abs{z} + \abs{z'} + \abs{\tilde{z}} + \abs{\tilde{z}'} )(\abs{z-z'} + \abs{\tilde{z}-\tilde{z}'})
    \end{align*}
    for any $(t, z, \tilde{z}, z', \tilde{z}')\in [0,T] \times [\bbR^d \times \bbR^d]^2$.
\end{itemize}
This is again the fully coupled quadratic case, that is, ${\Hqunif}$ and ${\HqLK}$ are equivalent up to a change of constants. We begin by establishing existence and uniqueness of solutions on a sufficiently small time interval in Lemma \ref{lem: localauxBSDE}, where the required smallness conditions for the terminal value and time horizon are satisfied. By concatenating these local solutions, we extend the result to the global interval $[0,T]$ in Proposition \ref{prop: auxBSDE}. 

First, we note that the terminal condition in BSDE~\ref{BSDE: application} is $F$, representing a random liability. It is not realistic to assume $F$ is close to zero. Instead, we relax this requirement and require that $F$ is close to its mean. Accordingly, we rewrite the general BSDE~\ref{BSDE_general} in terms of the centered random variable $\eta$:
\begin{equation}\label{BSDE_general_auxiliary}
    U_t = \eta + \int_t^T \int_{\tilde{\Omega}} \big[ g(s, Z_s, \tilde{Z}_s) - g(s, 0, 0) \big]\, d\tilde{\mathbb{P}}\, ds - \int_t^T Z_s  \!\cdot dW_s, 
    \qquad 0 \leq t \leq T,
\end{equation}
where
\[
    U_t := Y_t - \int_0^tg(s, 0, 0)ds - \bbE[\xi] \text{ and } \eta := \xi - \bbE[\xi]
\]
are both well-defined because of $\Hzero$.
\begin{remark}
    When $b$ and $\sigma$ are deterministic, \eqref{BSDE_general_auxiliary} has a unique solution if we assume the terminal condition $\xi$ satisfies the condition described in Theorem \ref{thm:mainHolder}.
\end{remark}
In the following, we give the results when $b$ and $\sigma$ are not deterministic. In this case, we need smallness in the spread of the terminal condition from its mean. 
\begin{lemma}\label{lem: localauxBSDE}
Let $T < (64L_z^2)^{-1}$ and assume that the driver $g$ satisfies ${\Hzero}$ and \(\HqLK\) and terminal condition $\eta$ be a bounded terminal condition such that
\[
    \norm{\eta}_{L^\infty}  \leq \rho,
\]
where
\[
    \rho^2 := \frac{1 - 64 L_z^2 T}{65 K_q}.
\]
Then the BSDE~\ref{BSDE_general_auxiliary} admits a unique solution $(U, Z) \in \scB_\rho$, where
\[
    \scB_\rho := \left\{ (U, Z) \in \bbS^\infty \times \bbH^2_{\mathrm{BMO}} : \norm{U}_{\bbS^\infty}^2 + \norm{Z}_{BMO}^2 \leq \rho^2 \right\}.
\]
\end{lemma}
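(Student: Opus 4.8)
The plan is to build the solution on $[0,T]$ by a Banach fixed-point argument inside the ball $\scB_\rho$, following the strategy of \cite{tevzadze2008solvability}. Observe first that $\scB_\rho$ is a closed ball in the Banach space $\bbS^\infty\times\bbH^2_{\mathrm{BMO}}$, hence a complete metric space in its own right. Given an iterate $(U',Z')\in\scB_\rho$, I would define $\Phi(U',Z')=:(U,Z)$ by solving the linear (frozen-driver) BSDE
\[
U_t=\eta+\int_t^T\widetilde{\bbE}\big[g(s,Z'_s,\tilde Z'_s)-g(s,0,0)\big]\,ds-\int_t^T Z_s\cdot dW_s,\qquad t\in[0,T],
\]
that is, $U_t=\bbE\!\big[\eta+\int_t^T\widetilde{\bbE}(g(s,Z'_s,\tilde Z'_s)-g(s,0,0))\,ds\mid\F_t\big]$ with $Z$ the integrand from the martingale representation; since $Z'\in\bbH^2_{\mathrm{BMO}}\subseteq\bbH^2$ the frozen driver is a.s.\ integrable in time, so $U$ is a well-defined continuous process and the Step~1 bounds will show $(U,Z)\in\bbS^\infty\times\bbH^2_{\mathrm{BMO}}$. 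A fixed point of $\Phi$ in $\scB_\rho$ is precisely a solution of \eqref{BSDE_general_auxiliary} in $\scB_\rho$, and $Y_t:=U_t+g(t,0,0)+\bbE[\xi]$ (with the same $Z$) then solves \eqref{BSDE_general}.

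\emph{Step 1: $\Phi$ maps $\scB_\rho$ into itself.} From $(\mathrm{H}_q')$ with the primed slots set to $0$ one gets the growth bound $|g(s,z,\tilde z)-g(s,0,0)|\le L_z(|z|+|\tilde z|)+K_z(|z|+|\tilde z|)^2$; taking $\widetilde{\bbE}$ and using the copy identities $\widetilde{\bbE}|\tilde Z'_s|=\bbE|Z'_s|$, $\widetilde{\bbE}|\tilde Z'_s|^2=\bbE|Z'_s|^2$ reduces the mean-field term to a function of $Z'$ alone. I would then bound $\|U\|_{\bbS^\infty}$ by $\|\eta\|_{L^\infty}$ plus the supremum over $(t,\omega)$ of $\bbE[\int_t^T\widetilde{\bbE}|g(s,Z'_s,\tilde Z'_s)-g(s,0,0)|\,ds\mid\F_t]$, and control $\|\int_0^\cdot Z\,dW\|_{\mathrm{BMO}}^2$ by applying It\^o to $|U|^2$ and taking $\bbE[\cdot\mid\F_\tau]$, which gives $\bbE[\int_\tau^T|Z_s|^2ds\mid\F_\tau]\le\|\eta\|_{L^\infty}^2+2\|U\|_{\bbS^\infty}\,\bbE[\int_\tau^T\widetilde{\bbE}|g(s,Z'_s,\tilde Z'_s)-g(s,0,0)|\,ds\mid\F_\tau]$. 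In both estimates the linear part of the conditional time-integral is handled by Cauchy--Schwarz in time ($\bbE[\int_\tau^T|Z'_s|ds\mid\F_\tau]\le\sqrt T\,\|Z'\|_{\mathrm{BMO}}$, and likewise $\int_\tau^T(\bbE|Z'_s|^2)^{1/2}ds\le\sqrt T\,\|Z'\|_{\mathrm{BMO}}$), and the quadratic part by the BMO energy inequality ($\bbE[\int_\tau^T|Z'_s|^2ds\mid\F_\tau]\le\|Z'\|_{\mathrm{BMO}}^2$ and $\int_\tau^T\bbE|Z'_s|^2ds\le\|Z'\|_{\mathrm{BMO}}^2$). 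Since $\|Z'\|_{\mathrm{BMO}}\le\rho$ and $\|\eta\|_{L^\infty}\le\rho$ on $\scB_\rho$, substituting the hypotheses $T<(64L_z^2)^{-1}$ and $\rho^2=(1-64L_z^2T)/(65K_z)$ should close the computation to $\|U\|_{\bbS^\infty}^2+\|Z\|_{\mathrm{BMO}}^2\le\rho^2$.

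\emph{Step 2: $\Phi$ is a contraction.} For two iterates $(U^{(1)},Z^{(1)}),(U^{(2)},Z^{(2)})\in\scB_\rho$ with images $(\bar U^{(1)},\bar Z^{(1)}),(\bar U^{(2)},\bar Z^{(2)})$, the differences $\delta\bar U,\delta\bar Z$ solve a BSDE with zero terminal value and driver $\widetilde{\bbE}[g(s,Z^{(1)}_s,\tilde Z^{(1)}_s)-g(s,Z^{(2)}_s,\tilde Z^{(2)}_s)]$, which by $(\mathrm{H}_q')$ is controlled by $\big(L_z+K_z\sum_i(|Z^{(i)}_s|+|\tilde Z^{(i)}_s|)\big)(|\delta Z_s|+|\delta\tilde Z_s|)$. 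Re-running the It\^o/BMO scheme of Step~1 on $|\delta\bar U|^2$, using Cauchy--Schwarz, the copy identities applied to $\delta Z$, and the uniform bound $\|Z^{(i)}\|_{\mathrm{BMO}}\le\rho$, should yield
\[
\|\delta\bar U\|_{\bbS^\infty}^2+\|\delta\bar Z\|_{\mathrm{BMO}}^2\ \le\ \kappa\big(\|\delta U\|_{\bbS^\infty}^2+\|\delta Z\|_{\mathrm{BMO}}^2\big),
\]
with $\kappa$ an explicit increasing function of $L_z^2T+K_z\rho^2$ that is $<1$ under the standing smallness hypotheses. Banach's fixed-point theorem then produces the unique $(U,Z)\in\scB_\rho$, which is the asserted solution, and uniqueness within $\scB_\rho$ is automatic.

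I expect the main obstacle to be the \emph{joint} calibration of the two smallness parameters $T$ and $\rho$. The quadratic term $K_z(|z|+|\tilde z|)^2$ has no small prefactor of its own; it becomes harmless only because on $\scB_\rho$ every BMO norm is $\le\rho$, so the BMO energy inequality turns it into a contribution of order $K_z\rho^2$, effectively converting $(\mathrm{H}_q')$ into a Lipschitz condition with a small effective constant. One must then verify that a \emph{single} pair $(\rho,T)$ simultaneously yields the self-map property (Step~1) and strict contraction (Step~2), which is exactly what pins down the numerical constants $64$ and $65$ in the statement. A secondary but pervasive technicality is the mean-field coupling: each $\widetilde{\bbE}$ of a function of $\tilde Z'$ must first be rewritten via the copy identities as $\bbE$ of the corresponding function of $Z'$, and the resulting \emph{deterministic} time integral $\int_0^T\bbE|Z'_s|^2ds$ --- which is $\le\|Z'\|_{\mathrm{BMO}}^2$ but is not itself a conditional quantity --- has to be absorbed using the smallness of $T$ rather than directly through the BMO inequality.
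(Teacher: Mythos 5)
Your proposal follows essentially the same route as the paper: both define the frozen-driver map on the closed ball $\scB_\rho$ in $\bbS^\infty\times\bbH^2_{\mathrm{BMO}}$, establish the self-map property and the contraction property by applying It\^o's formula to $|U|^2$, taking conditional expectations, and controlling the linear part via Cauchy--Schwarz in time and the quadratic part via the BMO energy inequality together with the copy identities, before invoking Banach's fixed-point theorem. The joint calibration of $T$ and $\rho$ that you flag as the main obstacle is exactly what the paper's Step~3 resolves, so your outline is correct and matches the paper's argument.
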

\begin{proof}

\textbf{Step 1.} We first establish existence and uniqueness for the auxiliary BSDE:
\begin{equation*}
    U_t = \eta + \int_t^T \int_{\tilde{\Omega}} \big[g(s, Z_s, \tilde{Z}_s) - g(s, 0, 0)\big] \, d\tilde{\mathbb{P}}\, ds - \int_t^T Z_s  \!\cdot dW_s, \qquad 0 \leq t \leq T.
\end{equation*}
For each $(u, z) \in \bbS^\infty \times \bbH^2_{\mathrm{BMO}}$, with $\tilde{z}$ a copy of $z$, define $(U, Z) = \Gamma(u, z)$ to be the solution to
\begin{equation*}
    U_t = \eta + \int_t^T \int_{\tilde{\Omega}} \big[g(s, z_s, \tilde{z}_s) - g(s, 0, 0)\big] \, d\tilde{\mathbb{P}}\, ds - \int_t^T Z_s  \!\cdot dW_s.
\end{equation*}
Applying Itô's formula to $|U_t|^2$ on $[t, T]$, we obtain
\begin{align*}
    |U_t|^2 + \int_t^T |Z_s|^2 ds
    = |\eta|^2 + \int_t^T 2 U_s \int_{\tilde{\Omega}} g(s, z_s, \tilde{z}_s) - g(s, 0, 0) \, d\tilde{\mathbb{P}} ds - 2\int_t^T U_s \cdot Z_s  \!\cdot dW_s.
\end{align*}
Taking the conditional expectation and applying $2ab \leq \frac{1}{2}a^2 + 2b^2$, we deduce
\begin{align}
    |U_t|^2 + \bbE_t\left[ \int_t^T |Z_s|^2 ds \right]&\leq \norm{\eta}_{L^\infty}^2 + \frac{1}{2} \norm{U}_{\bbS^\infty}^2 
    + 2  \bbE_t\left[ \int_t^T \int_{\tilde{\Omega}} |g(s, z_s, \tilde{z}_s) - g(s, 0, 0)| d\tilde{\mathbb{P}}  ds \right]^2. 
    \label{eq:BSDE_UZ_bound}
\end{align}
Together with the~${\HqLK}$ condition and Jensen's inequality, we have
\[
    \left| \int_{\tilde{\Omega}} \big[g(s, z_s, \tilde{z}_s) - g(s, 0, 0)\big] \, d\tilde{\mathbb{P}} \right|
    \leq L_z\big(|z_s| + \bbE|z_s|\big) + 2K_q\big(|z_s|^2 + \bbE|z_s|^2\big).
\]
Thus,
\begin{equation}\label{eq:jensen_aux}
\begin{aligned}
&    \bbE_t\left[ \int_t^T \big| \int_{\tilde{\Omega}} g(s, z_s, \tilde{z}_s) - g(s, 0, 0) d\tilde{\mathbb{P}} \big| ds \right]\\
&    \leq \bbE_t\left[ \int_t^T L_z |z_s| + 2 K_q |z_s|^2 ds \right] 
    + \bbE\left[ \int_t^T L_z |z_s| + 2 K_q |z_s|^2 ds \right]. 
\end{aligned}
\end{equation}
Combining \eqref{eq:BSDE_UZ_bound} and \eqref{eq:jensen_aux} gives
\begin{align*}
  &  \frac{1}{2} \norm{U}_{\bbS^\infty}^2 + \norm{Z}_{BMO}^2\\
    &\leq \norm{\eta}_{L^\infty}^2 + 2 \esssup_{(t, \omega)\in[0,T]\times\Omega} \left[
        \bbE_t\left[ \int_t^T L_z |z_s| + 2K_q|z_s|^2 ds \right]
        + \bbE\left[ \int_t^T L_z|z_s| + 2K_q|z_s|^2 ds \right]
    \right]^2 \notag \\
    &\leq \norm{\eta}_{L^\infty}^2 + 4 \esssup_{(t, \omega)\in[0,T]\times\Omega} \left(
        \bbE_t\left[ \int_t^T L_z|z_s| + 2K_q|z_s|^2 ds \right]^2 
        + \bbE\left[ \int_t^T L_z|z_s| + 2K_q|z_s|^2 ds \right]^2
    \right) \notag \\
    &\leq \norm{\eta}_{L^\infty}^2 + 16 \left(
        L_z^2 T \norm{z}_{BMO}^2 + 4K_q^2 \norm{z}_{BMO}^4
    \right).
\end{align*}
The last inequality uses Jensen's inequality: $\left( \int_t^T L_z|z_s| \, ds \right)^2 \leq (T- t)\left( \int_t^T L_z^2|z_s|^2 \,ds \right)$. Now, using $a^2 + b^2 \leq (|a| + |b|)^2$, it follows that
\begin{align*}
    \norm{U}_{\bbS^\infty}^2 + \norm{Z}_{BMO}^2 
    &\leq 2\norm{U}_{\bbS^\infty}^2 + 4\norm{Z}_{BMO}^2 \\
    &\leq 4\norm{\eta}_{L^\infty}^2 + \alpha^2 (\norm{u}_{\bbS^\infty}^2 + \norm{z}_{BMO}^2) + \beta^2 (\norm{u}_{\bbS^\infty}^2 + \norm{z}_{BMO}^2)^2,
\end{align*}
where $\alpha := 8L_z \sqrt{T}$, $\beta := 16K_q$. To obtain a uniform bound, set $R > 0$ such that
\[
    4\norm{\eta}_{L^\infty}^2 + \alpha^2 R^2 + \beta^2 R^4 \leq R^2,
\]
which holds provided $\alpha^2 -1 <0$ (that is, $T < (64 L_z^2)^{-1}$), and we require
\begin{equation}\label{eq: eta bound step1}
    \norm{\eta}_{L^\infty} \leq \frac{1-\alpha^2}{4\beta}.
\end{equation}
In this case, we may take
\begin{equation}
    R^2 = \frac{8}{(1 - \alpha^2)} \norm{\eta}_{L^\infty}^2.
\end{equation}
The ball
\[
    \scB_R := \left\{ (U, Z) \in \bbS^\infty \times \bbH^2_{BMO} : \norm{U}_{\bbS^\infty}^2 + \norm{Z}_{BMO}^2 \leq R^2 \right\}
\]
satisfies $\Gamma(\scB_R) \subset \scB_R$.\\
\textbf{Step 2.} Next, we prove that $\Gamma$ is a contraction on $\scB_R$.
Let $(u^i, z^i) \in \scB_R$ ($i = 1, 2$), with $(U^i, Z^i) = \Gamma(u^i, z^i)$, and set
\[
    \Delta u = u^1 - u^2, \quad \Delta z = z^1 - z^2, \quad \Delta U = U^1 - U^2, \quad \Delta Z = Z^1 - Z^2.
\]
A similar computation as above yields
\begin{align*}
    &\frac{1}{2} \norm{\Delta U}_{\bbS^\infty}^2 + \norm{\Delta Z}_{BMO}^2 \\
    &\leq 2\,\esssup_{(t, \omega) \in [0,T] \times \Omega}
        \left[\bbE_t\left(\int_t^T |g(s, z^1_s, \tilde{z}^1_s) - g(s, z^2_s, \tilde{z}^2_s)| ds \right)\right]^2 \\
    &\leq 4L_z^2 T\,\esssup_{(t, \omega)} \bbE_t\left[\int_t^T |\Delta z_s|^2 + \bbE|\Delta z_s|^2 ds \right] \\
    &\quad + 16 K_q^2\,\esssup_{(t, \omega)} \bbE_t\left[\int_t^T (|z^1_s|^2 + |z^2_s|^2 + \bbE|z^1_s|^2 + \bbE|z^2_s|^2) ds \right]
    \bbE_t\left[\int_t^T (|\Delta z_s| + \bbE|\Delta z_s|)^2 ds \right] \\
    &\leq \left(8L_z^2 T + 32K_q^2(\norm{z^1}_{BMO}^2 + \norm{z^2}_{BMO}^2)\right)\left(\norm{\Delta u}_{\bbS^\infty}^2 + \norm{\Delta z}_{BMO}^2\right).
\end{align*}
Since $\norm{z^i}_{BMO}^2 \leq R^2$ for $i = 1, 2$, we obtain
\[
    \norm{\Delta U}_{\bbS^\infty}^2 + \norm{\Delta Z}_{BMO}^2 \leq (16L_z^2 T + 128K_q^2 R^2)\left(\norm{\Delta u}_{\bbS^\infty}^2 + \norm{\Delta z}_{BMO}^2\right).
\]
Now, recall from the previous step that $R^2 = \frac{8}{(1 - \alpha^2)} \norm{\eta}_{L^\infty}^2$. In order for $R^2$ to be positive, we require $T < (64 L_z^2)^{-1}$, which is more restrictive than the condition here. Furthermore, if
\begin{equation}\label{eq: eta bound step2}
    \norm{\eta}_{L^\infty}^2 < \frac{(4-\alpha^2)(1-\alpha^2)}{16 \beta^2},
\end{equation}
then it follows that
\[
    16 L_z^2 T + 128 K_q^2 R^2 < 1,
\]
and thus the mapping $\Gamma$ is a contraction and admits a unique fixed point in $\scB_R$.\\
\textbf{Step 3.} Finally, together with the local condition $T < (64L_z^2)^{-1}$, the smallness condition~\eqref{eq: eta bound step1} is in fact more stringent than the smallness condition~\eqref{eq: eta bound step2}. Consequently, if we define $\rho > 0$ by
\[
    \rho^2 := \frac{1 - 64 L_z^2 T}{65 K_q},
\]
then, provided that
    $\norm{\eta}_{L^\infty}  \leq \rho$,
The general BSDE admits a unique adapted solution $(U, Z) \in \scB_\rho$ on this \emph{local} time interval, under the imposed smallness conditions on both the terminal value and the time horizon. This completes the proof of local existence and uniqueness.
\end{proof}

\begin{proposition}\label{prop: auxBSDE}
Let $T>0$, and assume that $g$ satisfies~${\Hzero}$ and \(\HqLK\). Suppose there exists $\rho>0$ such that
\[
    \norm{\eta}_{L^\infty} \le \rho,
\]
where
\begin{equation*}
    \rho^2 := \frac{1}{65^2 K_q}.
\end{equation*}
Then the BSDE~\eqref{BSDE_general_auxiliary} admits a unique solution $(U,Z)\in\scB_\rho$ on $[0,T]$, where
\[
    \scB_\rho := \Bigl\{(U,Z)\in \bbS^\infty \times \bbH^2_{\mathrm{BMO}}:\ \|U\|_{\bbS^\infty}^2+\|Z\|_{BMO}^2\le \rho^2 \Bigr\}.
\]
\end{proposition}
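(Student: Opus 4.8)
The plan is to build the global solution by concatenating the local solutions supplied by Lemma~\ref{lem: localauxBSDE}, sweeping backwards from the terminal time. First I would fix $N\in\bbN$ large enough that $\delta:=T/N$ satisfies $\delta\le(65L_z^2)^{-1}$, set $t_k:=k\delta$ and $I_k:=[t_k,t_{k+1}]$, and record the local smallness threshold attached to a slice of length $\delta$, namely $\rho_\delta$ with $\rho_\delta^2:=(1-64L_z^2\delta)/(65K_z)$. The point of the constraint $\delta\le(65L_z^2)^{-1}$ is that it forces $\rho_\delta^2\ge 1/(65^2K_z)=\rho^2$, so any $\F$-measurable random variable of $L^\infty$-norm at most $\rho$ qualifies as an admissible terminal datum for Lemma~\ref{lem: localauxBSDE} on every slice.

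Next I would run the backward induction. On $I_{N-1}$ the terminal datum is $\eta$, and $\norm{\eta}_{L^\infty}\le\rho\le\rho_\delta$, so Lemma~\ref{lem: localauxBSDE}, applied with $I_{N-1}$ in place of $[0,T]$ and $\delta$ in place of $T$, yields a unique $(U,Z)$ on $I_{N-1}$ with $\norm{U}_{\bbS^\infty(I_{N-1})}^2+\norm{Z}_{\mathrm{BMO}(I_{N-1})}^2\le\rho_\delta^2$; in particular $\norm{U_{t_{N-1}}}_{L^\infty}\le\rho_\delta$. Since the threshold on the next slice is again $\rho_\delta$, the lemma applies on $I_{N-2}$ with terminal datum $U_{t_{N-1}}$, and, inductively, on each $I_k$ with terminal datum $U_{t_{k+1}}$; at every step the bound $\norm{U_{t_k}}_{L^\infty}\le\rho_\delta$ is reproduced, so the induction never stalls. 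I would then paste the $N$ pieces into a single process $(U,Z)$ on $[0,T]$, which is adapted, continuous (the pieces match at the grid points), bounded, and satisfies $Z\in\bbH^2_{\mathrm{BMO}}([0,T])$ since $Z$ is $\mathrm{BMO}$ on each of finitely many slices, and verify — by checking that the local equations glue at the grid points — that $(U,Z)$ solves \eqref{BSDE_general_auxiliary} on $[0,T]$.

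To promote the crude per-slice bounds to the global membership $(U,Z)\in\scB_\rho$, I would argue directly on the equation over $[0,T]$, reproducing Step~1 of the proof of Lemma~\ref{lem: localauxBSDE}: apply It\^o's formula to $|U_t|^2$ on $[\tau,T]$, use $2ab\le\tfrac12a^2+2b^2$, and insert the growth bound $\tilde\bbE\abs{g(s,Z_s,\tilde Z_s)-g(s,0,0)}\le L_z(\abs{Z_s}+\bbE\abs{Z_s})+2K_z(\abs{Z_s}^2+\bbE\abs{Z_s}^2)$ furnished by $(\mathrm{H}_q')$; since $\norm{U}_{\bbS^\infty}$ and $\norm{Z}_{\mathrm{BMO}}$ are already known to be finite, this produces an inequality of the shape $\norm{U}_{\bbS^\infty}^2+\norm{Z}_{\mathrm{BMO}}^2\le C\big(\norm{\eta}_{L^\infty}^2+\text{lower-order terms in }\norm{Z}_{\mathrm{BMO}}\big)$, from which $\norm{\eta}_{L^\infty}\le\rho$ yields $(U,Z)\in\scB_\rho$ by the same fixed-radius bookkeeping used in the lemma. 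Uniqueness in $\scB_\rho$ follows by reversing the sweep: if $(U,Z)$ and $(U',Z')$ both lie in $\scB_\rho$ and solve \eqref{BSDE_general_auxiliary}, their restrictions to $I_{N-1}$ lie in $\scB_{\rho_\delta}(I_{N-1})$ (slice norms are dominated by global ones, which are $\le\rho\le\rho_\delta$) and share the terminal datum $\eta$, hence coincide by the uniqueness part of Lemma~\ref{lem: localauxBSDE}; in particular $U_{t_{N-1}}=U'_{t_{N-1}}$, and descending through $I_{N-2},\dots,I_0$ gives $U\equiv U'$, $Z\equiv Z'$. Finally, the substitution $U_t=Y_t-g(t,0,0)-\bbE[\xi]$, $\eta=\xi-\bbE[\xi]$ sets up a norm-preserving bijection between solutions of \eqref{BSDE_general_auxiliary} and of \eqref{BSDE_general}, carrying the result over to \eqref{BSDE_general}.

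The main obstacle I anticipate is keeping the backward induction alive: one must verify that every intermediate value $U_{t_k}$ still satisfies the smallness bound required by Lemma~\ref{lem: localauxBSDE} on the next slice, which is precisely why the slices are taken of a common length $\delta$ (so the local threshold $\rho_\delta$ is the same at each step) and why $\delta$ is chosen small enough that $\rho_\delta\ge\rho$. A secondary subtlety is the passage from per-slice bounds to the global membership $(U,Z)\in\scB_\rho$, which cannot be read off by summing the slice estimates — the $\mathrm{BMO}$ norm is not additive over slices in a way that would keep the constant under control — and must therefore be re-derived from the equation on the whole interval $[0,T]$.
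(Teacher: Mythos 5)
Your construction is essentially the paper's: choose a uniform slice length $\delta\le(65L_z^2)^{-1}$ so that the local threshold $\rho_\delta^2=(1-64L_z^2\delta)/(65K_z)$ is at least $\rho^2=1/(65^2K_z)$, apply Lemma~\ref{lem: localauxBSDE} backwards slice by slice, observe that the per-slice $\bbS^\infty$ bound reproduces the smallness condition for the next terminal datum, and concatenate. (The paper takes $\delta=(65L_z^2)^{-1}$ exactly, so that $\rho_\delta=\rho$; this is a cosmetic difference.) The existence and uniqueness parts of your argument, including the descent argument for uniqueness using that slice norms are dominated by global norms, are sound and match the paper.

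The gap is in your final step, the promotion of the per-slice bounds to the global membership $(U,Z)\in\scB_\rho$. You correctly identify that this does not follow by summing slice estimates (the $\mathrm{BMO}$ norm over $[0,T]$ is only controlled by the \emph{sum} of the squared slice norms, which gives $N\rho_\delta^2$, not $\rho^2$), but your proposed repair — rerunning the Step~1 It\^o estimate of Lemma~\ref{lem: localauxBSDE} on the whole interval $[0,T]$ — does not close. That estimate produces an inequality of the form $x\le 4\norm{\eta}_{L^\infty}^2+\alpha^2x+\beta^2x^2$ with $x=\norm{U}_{\bbS^\infty}^2+\norm{Z}_{BMO}^2$ and $\alpha^2=64L_z^2T$; for general $T>0$ (which is precisely the regime of the proposition) one has $\alpha^2\ge1$, the term $\alpha^2x$ cannot be absorbed into the left-hand side, and the inequality is vacuous. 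So as written you have proved global existence and uniqueness of a solution in $\bbS^\infty\times\bbH^2_{\mathrm{BMO}}$, but not that its global norms satisfy $\norm{U}_{\bbS^\infty}^2+\norm{Z}_{BMO}^2\le\rho^2$. To be fair, the paper's own proof simply asserts this membership after concatenation without addressing the non-additivity of the $\mathrm{BMO}$ norm, so you have put your finger on a real weak point of the statement; but your claimed resolution of it would fail, and either the global bound needs a different argument or the conclusion should be weakened to a $T$-dependent radius.
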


\begin{proof}
Let $\delta := (65L_z^2)^{-1} < (64L_z^2)^{-1}$, and divide $[0,T]$ into $N$ subintervals of length at most $\delta$: $0 = t_0 < t_1 < \cdots < t_N = T$, where $t_{k+1} - t_k \leq \delta$ for all $k$.  By construction, $\rho^2 = \frac{1-64L_z^2\delta}{65K_q}= \frac{1}{65^2K_q}$.
We construct the solution recursively, starting from the last interval $[t_{N-1}, t_N]$. On this interval, the terminal value is $\eta$, and the smallness condition is satisfied by assumption. Thus, by applying the local existence and uniqueness lemma, we obtain a unique solution $(U, Z)$ on $[t_{N-1}, t_N]$ with a priori bound in $\scB_\rho$. Then we proceed recursively: at each interval $[t_{k-1}, t_k]$, we solve the BSDE using $U_{t_k}$ as the terminal value. The a priori bound $\|U\|_{\bbS^\infty} \leq \rho$ ensures that the smallness condition $\|U_{t_k}\|_{L^\infty} \leq \rho$
continues to hold at each step. Finally, By concatenating these local solutions, we obtain a unique adapted solution $(U, Z)$ on $[0,T]$.
\end{proof}

We remark that, given the unique solution $(U, Z) \in \scB_\rho$ constructed above for the auxiliary BSDE~\ref{BSDE_general_auxiliary}, the solution to the original BSDE~\ref{BSDE_general} is obtained by the explicit transformation
\[
    Y_t = U_t + \int_0^tg(s,0,0)ds + \bbE[\xi], \qquad 0 \leq t \leq T.
\]
Furthermore, we emphasize that the smallness condition $\norm{\eta}_{L^\infty} \leq \rho$, with $\eta := \xi - \bbE[\xi]$, requires the terminal condition $\xi$ to be close to its mean. This is a slightly more general requirement than simply demanding that $\xi$ itself be small, as it allows for arbitrary (but fixed) expectations and only controls the oscillation of $\xi$ around its mean.
\begin{remark}
Proposition~\ref{prop: auxBSDE} was established for drivers of the form $\int_{\tilde{\Omega}} g(s, Z_s, \tilde{Z}_s)\, d\tilde{\mathbb{P}}$, which naturally extends to the case where the driver depends on the expectation $\bbE[Z]$ instead. In particular, the existence and uniqueness result applies to BSDEs of the type
\begin{equation*}
    Y_t = \xi + \int_t^T g(s, Z_s, \mathbb{E}[Z_s])\, ds - \int_t^T Z_s  \!\cdot dW_s, \quad t \in [0, T],
\end{equation*}
which will be useful for analyzing the driver~\eqref{eq: driverf} we picked earlier in the context of utility maximization in this section.
\end{remark}

While the above analysis focuses on drivers with $Z$ and mean-field dependence, it is very easy to extend existence and uniqueness results to more general settings where the driver $g$ may also depend on the process $Y$ and its expectation $\mathbb{E}[Y]$. In the present section, however, the application of interest involves drivers depending solely on $Z$ and its expectation, so the above results apply directly. We now present the main result of this section.

\begin{theorem}\label{thm: app}
Suppose the liability \(F\), the terminal condition of BSDE \eqref{BSDE: application} satisfies the condition such that $\norm{F - \bbE[F]}_{L^\infty} \leq \rho$ where
\begin{equation*}
    \rho^2 := \frac{1}{65^2 K_q}.
\end{equation*} Let \((Y, Z)\) denote the unique solution to BSDE \eqref{BSDE: application} with driver \(f\) \eqref{eq: driverf}. Then the value function \(V^{(p)}(t, x)\) associated with the utility maximization problem over the admissible set \(\scA\) and constraint set \(\conset\) is given by
\[
    V^{(p)}(0, x) = -\exp(-\alpha(x - Y_0)),
\]
and there exists an optimal trading strategy \(p^* \in \scA\) such that, for all \(t \in [0, T]\) and \(\omega \in \Omega\),
\[
    p^*_t(\omega) \in \proj{Z_t + \frac{\theta_t}{\alpha}}.
\]
\end{theorem}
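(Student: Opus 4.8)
The plan is to split the argument into three parts: (i) well-posedness of the BSDE~\eqref{BSDE: application} with the driver $f$ from~\eqref{eq: driverf}; (ii) admissibility of the candidate strategy $p^*_t\in\proj{Z_t + \frac{\theta_t}{\alpha}}$; and (iii) reading off the value function and the optimality of $p^*$ from the (super)martingale properties of the family $\crl{V^{(p)}(\cdot,x):p\in\scA}$ constructed just before the statement.

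For (i), I would verify that $f$ satisfies $(\mathrm{H}_0)$ and the fully coupled quadratic condition $(\mathrm{H}_q')$ (in the version with $\bbE[Z_s]$ in place of $\tilde Z_s$, which is covered by the remark following Proposition~\ref{prop: auxBSDE}). The terms $z\,\theta_t$ and $\frac{\abs{\theta_t}^2}{2\alpha}$ are linear in $z$ with bounded coefficient, respectively bounded, since $\theta$ is bounded, so the content lies in the term $-\frac{\alpha}{2}\dis^2\brak{z+\frac{\theta_t}{\alpha},\conset}$. Here $a\mapsto\dis(a,C)$ is $1$-Lipschitz for a fixed closed nonempty $C$ and satisfies $\dis(a,C)\le\abs{a}+\max\crl{\abs{c}:c\in C}$, so $a\mapsto\dis^2(a,C)$ is locally Lipschitz with a constant growing linearly in $\abs{a}$ and in $\max\crl{\abs{c}:c\in C}$; by Lemma~\ref{Lem: conset estimate} the latter is $\le K_1(1+\bbE\abs{Z_t})$, which yields the quadratic growth and local Lipschitz bound in $z$. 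It remains to control the dependence of $\conset$ on the aggregated signal: since $\consetwt=\tilde D\odot\bigl(\gamma\odot\bbE[\tilde Z_t]+\eta\bigr)$ and $\conset=\consetwt\,\sigma_t$ with $\sigma$ bounded, the Hausdorff distance between the sets $\conset$ attached to two values of $\bbE[Z_t]$ is bounded by a constant times the difference of those values, and $\bigl|\dis^2(a,C)-\dis^2(a,C')\bigr|\le\bigl(\dis(a,C)+\dis(a,C')\bigr)\,d_H(C,C')$. Putting these together produces exactly an estimate of type $(\mathrm{H}_q')$ for $f$, with $L_z,K_z$ depending only on $\alpha$, the bounds on $\theta$ and $\sigma$, on $\gamma,\eta$, and on $K_1$. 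Under the standing hypothesis $\norm{F-\bbE[F]}_{L^\infty}\le\rho$ with $\rho^2=\frac{1}{65^2 K_z}$, Proposition~\ref{prop: auxBSDE} then supplies a unique solution $(Y,Z)\in\bbS^\infty\times\bbH^2_{\mathrm{BMO}}$ of~\eqref{BSDE: application}.

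For (ii), Lemma~\ref{Lem: conset estimate} gives $\abs{p^*_t}\le\max\crl{\abs{c}:c\in\conset}\le K_1(1+\bbE\abs{Z_t})$; since $\int_0^T(1+\bbE\abs{Z_s})^2\,ds\le 2T+2\norm{Z}_{\bbH^2}^2<\infty$ and $\sigma$ is bounded, $p^*\sigma\in L^2[0,T]$, and the same bound shows $\bbE\bigl[\int_\tau^T\abs{\alpha(Z_s-p^*_s)}^2\,ds\mid\F_\tau\bigr]$ is bounded uniformly in $\tau$, so $\int_0^\cdot\alpha(Z_s-p^*_s)\cdot dW_s$ is a BMO martingale and, by Kazamaki's criterion, its stochastic exponential is a uniformly integrable martingale. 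Since $p^*_t\in\proj{Z_t + \frac{\theta_t}{\alpha}}$ makes $v(\cdot,p^*,Z)\equiv 0$ in the construction, $V^{(p^*)}(\cdot,x)$ is a genuine uniformly integrable martingale, and hence $\crl{\exp(-\alpha X^{p^*}_\tau)}_\tau=\crl{-V^{(p^*)}(\tau,x)\,e^{-\alpha Y_\tau}}_\tau$ is uniformly integrable because $Y\in\bbS^\infty$; thus $p^*\in\scA$. For (iii), the construction preceding the statement already gives that $V^{(p)}(\cdot,x)$ is a supermartingale for every $p\in\scA$ and that $V^{(p^*)}(\cdot,x)$ is a true martingale; as $X^p_0=x$ we have $V^{(p)}(0,x)=-\exp(-\alpha(x-Y_0))$ for every $p$, and as $Y_T=F$ we have $V^{(p)}(T,x)=-\exp(-\alpha(X^p_T-F))$. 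Hence, for all $p\in\scA$,
\[
\bbE\bigl[-\exp(-\alpha(X^p_T-F))\bigr]=\bbE\,V^{(p)}(T,x)\ \le\ V^{(p)}(0,x)=-\exp(-\alpha(x-Y_0)),
\]
with equality for $p=p^*$ by the martingale property; taking the supremum over $p\in\scA$ yields $V^{(p)}(0,x)=-\exp(-\alpha(x-Y_0))$ and identifies $p^*$ as an optimal strategy, following the scheme of \cite{hu2005utility}.

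I expect the main obstacle to be part (i): verifying that the distance-squared driver $f$ in~\eqref{eq: driverf} satisfies the fully coupled quadratic condition $(\mathrm{H}_q')$. The local Lipschitz behaviour of $a\mapsto\dis^2(a,C)$ for a fixed set is classical, but here the constraint set $\conset$ itself moves with the aggregated signal $\bbE[Z_t]$, so one must quantify how $\dis^2$ responds to a perturbation of the mean — i.e.\ bound the Hausdorff distance between the corresponding sets — and then check that the resulting increments have precisely the linear-plus-quadratic algebraic shape demanded by $(\mathrm{H}_q')$. The explicit affine form of $L(\bbE[\tilde Z_t])$ in Definition~\ref{def: mf_consts} together with the uniform growth bound of Lemma~\ref{Lem: conset estimate} are exactly what make this feasible; once $(\mathrm{H}_0)$ and $(\mathrm{H}_q')$ are in hand for $f$, the remaining steps (admissibility of $p^*$ and the martingale comparison) are routine.
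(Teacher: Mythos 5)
Your proposal is correct and follows essentially the same route as the paper: verify that the driver $f$ in \eqref{eq: driverf} satisfies ${(\rm H_q')}$ via Lemma \ref{Lem: conset estimate} and the Lipschitz property of the distance function (your Hausdorff-distance decomposition is just a more explicit version of the paper's one-line estimate), invoke Proposition \ref{prop: auxBSDE} for well-posedness, and conclude by the supermartingale/martingale comparison from the construction preceding the theorem, as in \cite{hu2005utility}. Your parts (ii)--(iii) spell out the admissibility of $p^*$ and the uniform integrability, details the paper simply defers to Theorem 7 of \cite{hu2005utility}.
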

\begin{proof}
    First we will show the driver \(f\) \eqref{eq: driverf}  satisfies the condition ${\Hzero}$ and ${\Hqsep}$. The verification of the first condition is straightforward due to the boundedness of $\theta$. To verify~${\Hqsep}$, we apply Lemma~\ref{Lem: conset estimate}. For any \(\omega \in \Omega\) and \(t \in [0,T]\), since \(Z_t(\omega) \in \bbR^d\), we have
\begin{align*}
    \dis^2\left(Z_t + \frac{\theta_t}{\alpha},\, \conset \right)
    &\leq 2 |Z_t|^2 + 2\left( \frac{|\theta_t|}{\alpha} + K_1 \bbE|Z_t| \right)^2 \\
    &\leq 2\frac{|\theta_t|^2}{\alpha^2} + \frac{K_1 |\theta_t|}{\alpha} + 2|Z_t|^2
    + 2\left( K_1^2 + \frac{K_1 |\theta_t|}{\alpha} \right) \bbE|Z_t|^2.
\end{align*}
Now, for $t \in [0, T]$ and $Z_t^1(\omega), Z_t^2(\omega) \in \bbR^d$, we observe
\begin{align*}
    &f(t, Z_t^1, \bbE[Z_t^1]) - f(t, Z_t^2, \bbE[Z_t^2]) \\
    &\qquad = -\frac{\alpha}{2} \left[ 
        \dis^2\left(Z_t^1 + \frac{\theta_t}{\alpha},\, C_t(\omega, \bbE[Z_t^1])\right)
        - \dis^2\left(Z_t^2 + \frac{\theta_t}{\alpha},\, C_t(\omega, \bbE[Z_t^2])\right)
    \right] + (Z_t^1 - Z_t^2) \theta_t.
\end{align*}
By the Lipschitz property of the distance function from a closed set, it follows that, for all $t \in [0,T]$
\begin{align*}
    &\left| f\left(t, Z_t^1, \bbE[Z_t^1]\right) - f\left(t, Z_t^2, \bbE[Z_t^2]\right) \right| \notag \\
    &\quad \leq \frac{\alpha}{2} \left(
        |Z_t^1| + |Z_t^2| + 2\frac{|\theta_t|}{\alpha} + K_1\big( \bbE|Z_t^1| + \bbE|Z_t^2| \big)
    \right) \left(
        |Z_t^1 - Z_t^2| + K_1 \bbE|Z_t^1 - Z_t^2|
    \right) + |\theta_t|\,|Z_t^1 - Z_t^2| \\
    &\quad \leq 2 |\theta_t|\,|Z_t^1 - Z_t^2| 
    + \frac{\alpha}{2}(1 \vee K_1)^2 \left( |Z_t^1| + |Z_t^2| + \bbE|Z_t^1| + \bbE|Z_t^2| \right)
        \left( |Z_t^1 - Z_t^2| + \bbE|Z_t^1 - Z_t^2| \right),
    \quad \text{a.s.}
\end{align*}
Together with the boundedness of $\theta$, we obtain
a sufficient condition for~${\HqLK}$. This ensures the existence and uniqueness of a solution \((Y, Z)\) to BSDE \eqref{BSDE: application} by Proposition \ref{prop: auxBSDE}. It remains to show that for any \(p \in \scA\), the process
\[
    V^{(p)}(t, x) := - \exp\left(-\alpha (X_t^p - Y_t)\right), \quad t \in [0, T],
\]
is a supermartingale, and that it becomes a martingale when \(p = p^*\) as defined above. This verification follows directly from arguments in Theorem 7 of \cite{hu2005utility}, and we omit the details here.
\end{proof}

\section{Conclusion}
In this paper, we study mean-field BSDEs whose generators of quadratic growth in the control component and in the mean-field term. Our main well-posedness results cover two complementary structural regimes. First, under the \emph{unified} quadratic condition $\Hqunif$, we establish global existence and uniqueness of solutions in $\bbS^\infty\times\bbH^2_{\mathrm{BMO}}$ for bounded terminal conditions of the form $\xi=\phi(W_{t_1},\ldots,W_{t_n})$, where $\phi$ is bounded and H\"older continuous. Second, under the \emph{separably} quadratic condition $\Hqsep$, we prove existence and uniqueness in the same solution class for bounded terminal conditions generated by uniformly continuous functionals of the Brownian path, i.e.\ $\xi=\phi(W_{[0,T]})$ with $\phi:C([0,T];\bbR^d)\to\bbR$ uniformly continuous in the supremum norm.

Finally, motivated by a mean-field exponential utility maximization problem with market-adaptive constraints, we require a fully coupled quadratic condition allowing cross-interaction terms. For this purpose we work under the quadratic condition $\HqLK$ (equivalent to $\Hqunif$) and develop a dedicated concatenation approach. This yields existence and uniqueness for the associated BSDE under a smallness assumption on the \emph{centred} terminal condition $\eta:=\xi-\bbE[\xi]$. As an application, we show that the resulting BSDE characterizes the value function and an optimal strategy, thereby extending the classical framework of \cite{hu2005utility} to a mean-field market with constraints driven by an aggregated trading signal.

\bibliographystyle{abbrvnat}
\bibliography{bib_yining}

@article{BuckdahnLiLi2026,
  author  = {Buckdahn, Rainer and Li, Juan and Li, Yanwei and Wang, Yi},
  title   = {A Global Stochastic Maximum Principle for Mean-Field Forward-Backward Stochastic Control Systems with Quadratic Generators},
  journal = {Annals of Applied Probability},
  note    = {to appear},
  year={2026},
}

@article{hao2022mean,
  title={Mean-field backward stochastic differential equations and nonlocal {PDE}s with quadratic growth},
  author={Hao, Tao and Hu, Ying and Tang, Shanjian and Wen, Jiaqiang},
  journal={The Annals of Applied Probability},
  volume={35},
  number={3},
  pages={2128--2174},
  year={2025},
  publisher={Institute of Mathematical Statistics}
}

@article{Imkeller2010path,
   title={Path regularity and explicit convergence rate for {BSDE} with truncated quadratic growth},
   volume={120},
   number={3},
   journal={Stochastic Processes and their Applications},
   publisher={Elsevier BV},
   author={Imkeller, Peter and Dos Reis, Gonçalo},
   year={2010},
pages={348--379}
}

@book{nualart2006malliavin,
  title={The Malliavin calculus and related topics},
  author={Nualart, David},
  volume={1995},
  year={2006},
  publisher={Springer}
}

@article{briand2006bsde,
	title={{BSDE} with quadratic growth and unbounded terminal value},
	author={Briand, Philippe and Hu, Ying},
	journal={Probability Theory and Related Fields},
	volume={136},
	number={4},
	pages={604--618},
	year={2006},
	publisher={Springer}
}

@article{bismut1976linearquad,
  title={Linear quadratic optimal stochastic control with random coefficients},
  author={Bismut, Jean-Michel},
  journal={SIAM Journal on Control and Optimization},
  volume={14},
  number={3},
  pages={419--444},
  year={1976},
  publisher={SIAM}
}

@article{bismut1973conjugate,
	title={Conjugate convex functions in optimal stochastic control},
	author={Bismut, Jean-Michel},
	journal={Journal of Mathematical Analysis and Applications},
	volume={44},
	number={2},
	pages={384--404},
	year={1973},
	publisher={Academic Press}
}

@article{pardoux1990adapted,
	title={Adapted solution of a backward stochastic differential equation},
	author={Pardoux, Etienne and Peng, Shige},
	journal={Systems \& Control Letters},
	volume={14},
	number={1},
	pages={55--61},
	year={1990},
	publisher={Elsevier}
}

@article{Kobylanski:2000cy,
	author = {Kobylanski, Magdalena},
	title = {{Backward stochastic differential equations and partial differential equations with quadratic growth}},
	journal = {The Annals of Probability},
	year = {2000},
	volume = {28},
	number = {2},
	pages = {558--602}
}

@article{el1997backward,
	title={Backward stochastic differential equations in finance},
	author={El Karoui, Nicole and Peng, Shige and Quenez, Marie Claire},
	journal={Mathematical finance},
	volume={7},
	number={1},
	pages={1--71},
	year={1997},
	publisher={Wiley Online Library}
}

@article{hu2016multi,
	title={Multi-dimensional backward stochastic differential equations of diagonally quadratic generators},
	author={Hu, Ying and Tang, Shanjian},
	journal={Stochastic Processes and their Applications},
	volume={126},
	number={4},
	pages={1066--1086},
	year={2016},
	publisher={Elsevier}
}

@article{briand2013simple,
	title={A simple constructive approach to quadratic {BSDE}s with or without delay},
	author={Briand, Philippe and Elie, Romuald},
	journal={Stochastic processes and their applications},
	volume={123},
	number={8},
	pages={2921--2939},
	year={2013},
	publisher={Elsevier}
}

@article{briand2008stobsdes,
  title={{BSDE}s with stochastic Lipschitz condition and quadratic {PDE}s in Hilbert spaces},
  author={Briand, Philippe and Confortola, Fulvia},
  journal={Stochastic Processes and their Applications},
  volume={118},
  number={5},
  pages={818--838},
  year={2008},
  publisher={Elsevier}
}

@article{cheridito2014bsdes,
	title={{BSDE}s with terminal conditions that have bounded Malliavin derivative},
	author={Cheridito, Patrick and Nam, Kihun},
	journal={Journal of Functional Analysis},
	volume={266},
	number={3},
	pages={1257--1285},
	year={2014},
	publisher={Elsevier}
}

@article{cheridito2017bse,
	title={{BSE}’s, {BSDE}’s and fixed-point problems},
	author={Cheridito, Patrick and Nam, Kihun},
	journal={The Annals of Probability},
	volume={45},
	number={6A},
	pages={3795--3828},
	year={2017},
	publisher={Institute of Mathematical Statistics}
}

@inproceedings{kac1956foundations,
  title={Foundations of kinetic theory},
  author={Kac, Mark},
  booktitle={Proceedings of The third Berkeley symposium on mathematical statistics and probability},
  volume={3},
  year={1956}
}

@article{lasry2007mfgame,
  title={Mean field games},
  author={Lasry, Jean-Michel and Lions, Pierre-Louis},
  journal={Japanese journal of mathematics},
  volume={2},
  number={1},
  pages={229--260},
  year={2007},
  publisher={Springer}
}

@article{tevzadze2008solvability,
	title={Solvability of backward stochastic differential equations with quadratic growth},
	author={Tevzadze, Revaz},
	journal={Stochastic processes and their Applications},
	volume={118},
	number={3},
	pages={503--515},
	year={2008},
	publisher={Elsevier}
}

@article{Buckdahn2009MFBSDE,
title = {Mean-field backward stochastic differential equations and related partial differential equations},
journal = {Stochastic Processes and their Applications},
volume = {119},
number = {10},
pages = {3133-3154},
year = {2009},
author = {Rainer Buckdahn and Juan Li and Shige Peng},
keywords = {Mean-field models, McKean–Vlasov equation, Backward stochastic differential equations, Comparison theorem, Dynamic programming principle, Viscosity solution},
}

@article{hibon2017MFquadratic,
  author  = {Hibon, H\'el\`ene and Hu, Ying and Tang, Shanjian},
  title   = {Mean-field type quadratic {BSDE}s},
  journal = {Numerical Algebra, Control and Optimization},
  year    = {2023},
  volume  = {13},
  number  = {3\&4},
  pages   = {392--412},
}

@article{Li2018MFBSDEcontinuous,
title = {General mean-field {BSDE}s with continuous coefficients},
journal = {Journal of Mathematical Analysis and Applications},
volume = {466},
number = {1},
pages = {264-280},
year = {2018},
author = {Juan Li and Hao Liang and Xiao Zhang},
}

@misc{tang2023MultiMFBSDE,
      title={Multi-dimensional Mean-field Type Backward Stochastic Differential Equations with Diagonally Quadratic Generators}, 
      author={Shanjian Tang and Guang Yang},
      year={2023},
      eprint={2303.16872},
      archivePrefix={arXiv},
      primaryClass={math.PR},
      url={https://arxiv.org/abs/2303.16872}, 
}

@article{luo2019triangle,
      title={A type of globally solvable {BSDE}s with triangularly quadratic generators}, 
      author={Peng Luo},
      year={2020},
    journal = {Electronic Journal of Probability},
    volume = {25},
    pages = {1--23},
}

@article{Buckdahn2009MFlimit,
   title={Mean-field backward stochastic differential equations: A limit approach},
   volume={37},
   number={4},
   journal={The Annals of Probability},
   publisher={Institute of Mathematical Statistics},
   author={Buckdahn, Rainer and Djehiche, Boualem and Li, Juan and Peng, Shige},
   year={2009},
}

@article{Juan2024MFBSDE,
  author  = {Jiang, Weimin and Li, Juan and Wei, Qingmeng},
  title   = {General mean-field {BSDE}s with diagonally quadratic generator in multi-dimension},
  journal = {Discrete and Continuous Dynamical Systems},
  year    = {2024},
  volume  = {44},
  number  = {10},
  pages   = {2957--2984},
  doi     = {10.3934/dcds.2024049}
}

@Inbook{ZhangBSDEbook2017,
author="Zhang, Jianfeng",
title="Backward Stochastic Differential Equations",
bookTitle="Backward Stochastic Differential Equations: From Linear to Fully Nonlinear Theory",
year="2017",
publisher="Springer New York",
pages="79--99",
}

@Inbook{CohenStocal2015,
author="Cohen, Samuel N.
and Elliott, Robert J.",
title="Backward Stochastic Differential Equations",
bookTitle="Stochastic Calculus and Applications",
year="2015",
publisher="Springer New York",
pages="467--493",
}

@article{hu2005utility,
	title={Utility maximization in incomplete markets},
	author={Hu, Ying and Imkeller, Peter and M{\"u}ller, Matthias and others},
	journal={The Annals of Applied Probability},
	volume={15},
	number={3},
	pages={1691--1712},
	year={2005},
	publisher={Institute of Mathematical Statistics}
}

@article{elrouge2000utility,
author = {Rouge, Richard and El Karoui, Nicole},
title = {Pricing Via Utility Maximization and Entropy},
journal = {Mathematical Finance},
volume = {10},
number = {2},
pages = {259--276},
year = {2000}
}

@article{merton1969portfolio,
  title={Lifetime portfolio selection under uncertainty: The continuous-time case},
  author={Merton, Robert C},
  journal={The Review of Economics and Statistics},
  pages={247--257},
  year={1969},
  publisher={JSTOR}
}

@book{kazamaki2006BMO,
  title={Continuous exponential martingales and BMO},
  author={Kazamaki, Norihiko},
  year={2006},
  publisher={Springer}
}

\end{document}